\documentclass[11pt]{article}
\usepackage[a4paper]{geometry}
\usepackage{amsthm}
\usepackage{amsmath}
\usepackage{amssymb}
\usepackage{amsfonts}
\usepackage{graphicx}
\usepackage{color}
\usepackage[bf,SL,BF]{subfigure}

\usepackage{epsfig,amsbsy,graphicx,multirow}

\usepackage[all]{xypic}

\setcounter{secnumdepth}{3} \numberwithin{equation}{section}
\def\eref#1{(\ref{#1})}

\def\ve{\varepsilon}
\def\N{\mathbb{N}}
\def\f{\varphi}
\def\ve{\varepsilon}

\def\R{\mathbb{R}}
\def\T{\mathbb{T}}

\def\V{\mathcal{V}}

\def\F{\mathcal{F}}

\def\<{\big\langle}
\def\>{\big\rangle}

\def\diiv{\operatorname{div}}

\def\Tr{\operatorname{Trace}}

\def\det{\operatorname{det}}
\def\Hess{\operatorname{Hess}}

\def\esssup{{\operatorname{esssup}}}

\def\f{{\operatorname{-flux}}}

\newtheorem{Lemma}{Lemma}[section]
\newtheorem{Corollary}{Corollary}[section]
\newtheorem{Theorem}{Theorem}[section]
\newtheorem{Proposition}{Proposition}[section]

\theoremstyle{remark}
\newtheorem{Remark}{Remark}[section]

\theoremstyle{definition}

\newtheorem{Definition}{Definition}[section]

\theoremstyle{definition}

\begin{document}
\title{Flux norm approach   to finite dimensional homogenization approximations with  non-separated scales and high contrast.}

\date{\today}

\author{Leonid Berlyand\footnote{Pennsylvania State University, Department of
Mathematics}  \quad and Houman Owhadi\footnote{Corresponding author. California Institute of
Technology, Applied \& Computational Mathematics, Control \&
Dynamical systems, MC 217-50 Pasadena, CA 91125,
owhadi@caltech.edu}.} \maketitle

\begin{abstract}
We consider linear divergence-form scalar elliptic
equations and  vectorial equations for elasticity with rough
($L^\infty(\Omega)$, $\Omega \subset \R^d$) coefficients $a(x)$ that, in particular,  model media with non-separated scales and high contrast in material properties.   While  the homogenization of PDEs with periodic or ergodic coefficients and well separated scales is now well understood,  we consider here  the  most general case of arbitrary bounded coefficients.  For such problems, we  introduce  explicit and optimal   finite dimensional approximations of solutions that can be viewed as a theoretical Galerkin method with controlled error estimates, analogous to  classical homogenization  approximations.  In particular, this approach allows one to analyze a given medium directly without introducing the mathematical concept of an $\epsilon$ family of media as in  classical homogenization.
We define the flux norm as the $L^2$ norm of the potential part of the fluxes of solutions, which is equivalent to the usual $H^1$-norm.
We show that in the flux  norm, the error associated with approximating, in a properly defined  finite-dimensional  space, the set of solutions of the aforementioned  PDEs  with rough coefficients is  equal to the error associated with approximating the set of solutions of the same type  of PDEs with smooth coefficients in a standard space (e.g., piecewise polynomial).    We refer to this property as the {\it transfer property}.
A simple application of this property is the construction of finite dimensional approximation spaces with errors independent of the regularity and contrast of the coefficients and with optimal and explicit convergence rates.
 This transfer property also provides an alternative to the global harmonic change of coordinates for the homogenization of elliptic operators that can be extended to elasticity equations. The proofs of these homogenization results are based on a new class of elliptic inequalities. These inequalities play the same role in our approach as the div-curl lemma in classical homogenization.
\end{abstract}
\maketitle

\tableofcontents
\paragraph{Acknowledgements.} Part of the research of H. Owhadi is
 supported by the National Nuclear Security Administration
through the Predictive Science Academic Alliance Program. The work of L. Berlyand is supported in part by NSF grant DMS-0708324 and DOE grant DE-FG02-08ER25862.
We would like to thank L. Zhang for the computations associated with figure \ref{figupscaling}.  We also thank  B. Haines, L.  Zhang, and O. Misiats for carefully reading   the manuscript and  providing useful suggestions. We would like to thank Bj\"{o}rn Engquist, Ivo Babu\v{s}ka and John Osborn for useful comments and showing us related and missing references. We are also greatly in debt to Ivo Babu\v{s}ka and John Osborn for carefully reading the manuscript and providing us with very detailed comments and references which have lead to substantial changes. We would also like to thank two anonymous referees for precise and detailed  comments and suggestions.

\section{Introduction}

In this paper we  are interested in finite dimensional approximations of solutions of scalar and vectorial divergence form equations with rough coefficients in $\Omega \subset \R^d$, $d \geq 2$.
More precisely, in the  {\it scalar case}, we  consider the partial differential equation
\begin{equation}\label{scalarproblem0}
\begin{cases}
    -\diiv \Big(a(x)  \nabla u(x)\Big)=f(x) \quad  x \in \Omega; f \in L^2(\Omega), \;   a(x)=\{a_{ij} \in L^{\infty}(\Omega)\}\\
    u=0 \quad \text{on}\quad \partial \Omega,
    \end{cases}
\end{equation}
where $\Omega$ is a bounded subset of $\R^d$ with a smooth boundary (e.g., $C^2$)
and $a$ is symmetric and uniformly elliptic on $\Omega$. It follows
that the eigenvalues of $a$ are uniformly bounded from below and
above by two strictly positive constants, denoted by $\lambda_{\min}(a)$ and
$\lambda_{\max}(a)$.  Precisely, for all $\xi \in \R^d$ and $x\in \Omega$,
\begin{equation}
\lambda_{\min}(a)|\xi|^2 \leq \xi^T a(x) \xi \leq \lambda_{\max}(a)
|\xi|^2.
\end{equation}

In the  {\it vectorial case}, we consider the equilibrium deformation of an inhomogeneous elastic body under a
given load $b \in (L^2(\Omega))^d$, described by
\begin{equation}\label{elasticity}
    \begin{cases}
    -\diiv (C(x): \varepsilon(u))=b(x) \quad &  x \in \Omega\\
    u=0 \quad & \text{on } \partial \Omega,
    \end{cases}
\end{equation}
where $\Omega \subset  \mathbb R^d$ is a bounded domain, $C(x)=\{C_{ijkl}(x)\}$ is a 4th order tensor of elastic
modulus (with the associated symmetries), $u(x) \in  \mathbb R^d
$ is the displacement field, and for $\psi \in (H^1_0(\Omega))^d$, $\varepsilon(\psi)$ is the symmetric part of $\nabla \psi$, namely,
\begin{equation}
\varepsilon_{ij}(\psi)=\frac{1}{2}\Big(\frac{\partial
\psi_i}{\partial x_j}+\frac{\partial \psi_j}{\partial x_i}\Big).
\end{equation}
We assume that $C$ is uniformly
elliptic and $C_{ijkl} \in L^\infty(\Omega)$. It follows that the
eigenvalues of $C$ are uniformly bounded from below and above by two
strictly positive constants, denoted by $\lambda_{\min}(C)$ and
$\lambda_{\max}(C)$.

The analysis of finite dimensional approximations of scalar divergence form elliptic,
parabolic and hyberbolic equations with rough coefficients that  in addition satisfy a Cordes-type condition  in arbitrary dimensions has been performed in
\cite{MR2292954, MR2377253, OwZh06c}. In these works, global harmonic
coordinates are used as a  coordinate transformation. We also refer to
the work of  Babu{\v{s}}ka, Caloz, and Osborn  \cite{BabOsb83, MR1286212} in
which a harmonic change of coordinates is introduced in one-dimensional and
quasi-one-dimensional divergence form elliptic problems.

In essence, this harmonic change of coordinates allows for the mapping of the operator $L_a:=\diiv(a\nabla)$ onto the operator $L_Q:=\diiv(Q\nabla)$ where $Q$ is symmetric positive and divergence-free. This latter property of $Q$ implies that $L_Q$  can be written in both a divergence form and a non-divergence form operator. Using the $W^{2,2}$ regularity of solutions of $L_Q v=f$ (for $f\in L^2$), one is able to obtain homogenization results for the operator $L_a$ in the sense of finite dimensional approximations of its solution space (this relation with homogenization theory will be discussed in detail in section \ref{kjkgjkjgkjgjhg}).

This  harmonic change of coordinates  provides the desired approximation in two-dimensional scalar  problems, but  there is no analog of such  a change of coordinates for vectorial elasticity  equations. One goal of this paper is to obtain an analogous homogenization approximation  without  relying on any coordinate change and therefore   allowing for treatment of both scalar and vectorial problems in a unified framework.

In section \ref{sec1}, we introduce a new norm, called the flux norm, defined as the $L^2$-norm of the potential component of the fluxes of solutions of \eref{scalarproblem0} and \eref{elasticity}. We show that this norm is equivalent to the usual $H^1$-norm. Furthermore, this new norm allows for the transfer of error estimates associated with a given elliptic operator $\diiv(a\nabla)$ and a given approximation space $V$ onto error estimates for another given elliptic operator $\diiv(a'\nabla)$ with another approximation space $V'$ provided that the potential part of the fluxes of elements of $V$ and $V'$ span the same linear space. In this work, this transfer/mapping property will replace the transfer/mapping property associated with a global harmonic change of coordinates.

In section \ref{jkgjkhgjgu}, we show that a  simple and straightforward application of the flux-norm transfer property is to obtain finite dimensional approximation spaces for solutions of \eref{scalarproblem0} and \eref{elasticity} with ``optimal'' approximation errors independent of the regularity and contrast of the coefficients and the regularity of $\partial \Omega$.

Another application of the transfer property of the flux norm is given in  section \ref{lkjlsdhkj} for controlling the approximation error associated with  theoretical discontinuous Galerkin solutions of \eref{scalarproblem0} and \eref{elasticity}. In this context, for elasticity equations, harmonic coordinates are replaced by harmonic displacements.  The estimates introduced in  section \ref{lkjlsdhkj}  are based on mapping onto divergence-free coefficients via the flux-norm and a new class of inequalities introduced in section \ref{inequalities}. We believe that these inequalities are of independent interest for PDE  theory and could be helpful in other problems.

Connections between this work, homogenization theory and other related works will be discussed in section \ref{kjkgjkjgkjgjhg}.

\section{The flux norm and its properties}\label{sec1}
In this section, we will introduce the flux-norm and describe its properties when used as a norm for solutions of \eref{scalarproblem0} (and \eref{elasticity}). This flux-norm is equivalent to the usual $H^1_0(\Omega)$-norm (or $(H^1_0(\Omega))^d$-norm for solutions to the vectorial problem), but leads to error estimates that are independent of the material contrast.
Furthermore, it allows for the transfer of error estimates associated with a given elliptic operator $\diiv(a\nabla)$ and a given approximation space $V$ onto error estimates for another given elliptic operator $\diiv(a'\nabla)$ with another approximation space $V'$ provided that the potential part of the fluxes of elements of $V$ and $V'$ span the same linear space. In \cite{MR2292954}, approximation errors have been obtained for theoretical finite element solutions of \eref{scalarproblem0} with arbitrarily rough coefficients $a$. These approximation errors are based on the mapping of the operator $-\diiv(a\nabla)$ onto an non-divergence form operator $-Q_{i,j}\partial_i \partial_j$ using global harmonic coordinates as a change of coordinates.
It is not clear how to extend this change of coordinates to elasticity equations, whereas the flux-norm approach has a natural extension to systems of equations and can be used to link error estimates on two separate operators.

\subsection{Scalar case.}

\begin{Definition}\label{defWeyl}
 For
 $k\in
(L^2(\Omega))^d$, denote by $k_{pot}$ and $k_{curl}$
 the potential and divergence-free portions of the Weyl-Helmholtz decomposition of $k$. Recall that $k_{pot}$ and $k_{curl}$ are orthogonal with respect to the $L^2$-inner product. $k_{pot}$ is the orthogonal projection of $k$ onto $L^2_{pot}(\Omega)$ defined as the closure of the space
$\{\nabla f\;:\; f\in C_0^\infty(\Omega)\}$ in $(L^2(\Omega))^d$. $k_{curl}$ is the orthogonal projection of $k$ onto $L^2_{curl}(\Omega)$ defined as the closure of the space
$\{\xi\;:\; \xi \in (C^\infty(\Omega))^d\; \diiv(\xi)=0\}$ in $(L^2(\Omega))^d$
\end{Definition}
For $\psi \in H^1_0(\Omega)$, define
\begin{equation}\label{lakdlkjkjd3}
\|\psi\|_{a\f}:=\|(a \nabla \psi)_{pot}\|_{(L^2(\Omega))^d}.
\end{equation}

\paragraph{Motivations for the flux norm}
\begin{itemize}
\item The   $(\cdot)_{\text{pot}}$ in the  $a\f$-norm is explained by the fact that in practice, we are interested in fluxes (of heat, stress, oil, pollutant) entering or exiting a given domain. Furthermore, for a vector field $\xi$, $\int_{\partial\Omega}\xi\cdot n ds=\int_{\Omega}\text{div}(\xi)dx=\int_{\Omega}\text{div}(\xi_{\text{pot}})dx$, which means the flux entering or exiting is determined by the potential part of the vector field.  Thus, as with the energy norm, $\|u\|_a^2:=\int_\Omega (\nabla u)^T a \nabla u$, the flux norm has a natural physical interpretation. An error bound given in the flux-norm shows how well fluxes (of heat or stresses) are approximated.
\item While the energy norm is natural in many problems, we argue that this is no longer the case in the presence of high contrast.
Observe that in \cite{ChuHou09}, contrast independent error estimates are obtained by  renormalizing the energy norm by $\lambda_{\min}(a)$. In  \cite{MR1771781}, the error constants associated with the energy norm are made  independent of the contrast by using terms that are appropriately and explicitly weighted by $a$. These modifications on the energy norm or on the error bounds (expressed in the energy norm) have to be introduced because, in the presence of high contrast in material properties, the energy norm blows up.  Even in the simple case where $a$ is a constant ($a=\alpha I_d$ with $\alpha>0$), the solution of \eref{scalarproblem0} satisfies
  \begin{equation}
  \int_{\Omega}(\nabla u)^T a \nabla u=\frac{1}{\alpha} \big\|\nabla \Delta^{-1} f\big\|_{(L^2(\Omega))^d}^2.
  \end{equation}
    Hence the energy norm squared of the solution of \eref{scalarproblem0} blows up like $1/\alpha$ as $\alpha\downarrow 0$ whereas its flux-norm is independent of $\alpha$ (because $(a\nabla u)_{pot}= \nabla \Delta^{-1} f$)
    \begin{equation}\label{kjhkdhjdhjdje}
    \begin{split}
  \|u\|_{a\f}= \big\|\nabla \Delta^{-1} f\big\|_{(L^2(\Omega))^d}.
  \end{split}
  \end{equation}
 Equation \eref{kjhkdhjdhjdje} remains valid even when $a$ is not a constant (this is a consequence of the transfer property, see Corollary \ref{sjhgdkjshgd}).
   In reservoir modeling, fluxes of oil and water are the main quantities of interest to be approximated correctly. The energy norm is less relevant due to high contrast and has been modified (in \cite{ChuHou09} for instance) in order to avoid possible blow up.

\item Similar considerations of convergence in terms energies and fluxes are present in classical homogenization theory. Indeed, the convergence of solutions of $-\diiv(a^\epsilon \nabla u^\epsilon)=f$ can be expressed in terms of convergence of energies in the context of  $\Gamma$-convergence \cite{MR630747, MR1968440} (and its variational formulation) or in the terms (of weak) convergence of fluxes in $G$ or $H$-convergence \cite{Mur78,Gio75, MR0477444, MR0240443, MR506997} ($a^\epsilon \nabla u^\epsilon \rightarrow a^0 \nabla u^0$). Here, weak $L^2$ convergence of fluxes is used and no flux norm is necessary unlike in our study, where it  arises naturally.
\end{itemize}

\begin{Proposition}\label{Prop0}
$\|.\|_{a\f}$ is a norm on $H^1_0(\Omega)$. Furthermore, for all $\psi
\in H^1_0(\Omega)$
\begin{equation}\label{skkshg}
\lambda_{\min}(a) \|\nabla \psi\|_{(L^2(\Omega))^d} \leq
\|\psi\|_{a\f}\leq \lambda_{\max}(a) \|\nabla \psi\|_{(L^2(\Omega))^d}
\end{equation}
\end{Proposition}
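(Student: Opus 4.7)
The plan is to derive the two inequalities in \eref{skkshg} directly from the Weyl-Helmholtz decomposition plus uniform ellipticity, and then read off the norm axioms as an easy byproduct. The central observation I would use throughout is that for $\psi\in H^1_0(\Omega)$ the gradient $\nabla\psi$ belongs to $L^2_{\text{pot}}(\Omega)$: by density one can choose $\psi_n\in C_0^\infty(\Omega)$ with $\psi_n\to \psi$ in $H^1_0(\Omega)$, so $\nabla\psi_n\to\nabla\psi$ in $(L^2(\Omega))^d$, and each $\nabla\psi_n$ lies in the set whose closure defines $L^2_{\text{pot}}(\Omega)$ in Definition~\ref{defWeyl}. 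Hence $\nabla\psi\in L^2_{\text{pot}}(\Omega)$.

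For the upper bound I would simply use that $k\mapsto k_{\text{pot}}$ is an orthogonal projection in $(L^2(\Omega))^d$, so it is $L^2$-contractive; combined with the pointwise inequality $|a(x)\nabla\psi(x)|\le \lambda_{\max}(a)|\nabla\psi(x)|$, this gives
\[
\|\psi\|_{a\f}=\|(a\nabla\psi)_{\text{pot}}\|_{(L^2(\Omega))^d}\le \|a\nabla\psi\|_{(L^2(\Omega))^d}\le \lambda_{\max}(a)\|\nabla\psi\|_{(L^2(\Omega))^d}.
\]
For the lower bound I would exploit the $L^2$-orthogonality of the Helmholtz decomposition and the fact just established that $\nabla\psi\in L^2_{\text{pot}}(\Omega)$. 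Since $(a\nabla\psi)_{\text{curl}}$ is orthogonal to $\nabla\psi$ in $(L^2(\Omega))^d$,
\[
\big\langle (a\nabla\psi)_{\text{pot}},\nabla\psi\big\rangle_{L^2}=\big\langle a\nabla\psi,\nabla\psi\big\rangle_{L^2}\ge \lambda_{\min}(a)\|\nabla\psi\|_{(L^2(\Omega))^d}^2.
\]
Applying Cauchy--Schwarz to the left-hand side and dividing by $\|\nabla\psi\|_{(L^2(\Omega))^d}$ (the case $\nabla\psi=0$ being trivial) yields $\lambda_{\min}(a)\|\nabla\psi\|_{(L^2(\Omega))^d}\le \|\psi\|_{a\f}$.

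Finally, the norm axioms drop out for free. Homogeneity and the triangle inequality are inherited from the $L^2$-norm and the linearity of the projection $k\mapsto k_{\text{pot}}$. Non-negativity is obvious, and definiteness follows from the lower bound together with the Poincar\'e inequality on $H^1_0(\Omega)$: if $\|\psi\|_{a\f}=0$ then $\|\nabla\psi\|_{(L^2(\Omega))^d}=0$, hence $\psi=0$. I do not anticipate any genuine obstacle; the only step requiring a moment of care is the inclusion $\nabla H^1_0(\Omega)\subset L^2_{\text{pot}}(\Omega)$, which relies on the density of $C_0^\infty(\Omega)$ in $H^1_0(\Omega)$ and the closedness of $L^2_{\text{pot}}(\Omega)$, both standard.
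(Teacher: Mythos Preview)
Your proof is correct and follows essentially the same route as the paper: the lower bound is obtained from the identity $\langle (a\nabla\psi)_{\text{pot}},\nabla\psi\rangle=\langle a\nabla\psi,\nabla\psi\rangle$ (which the paper writes as $\int_\Omega (\nabla\psi)^T a\nabla\psi=\int_\Omega (\nabla\psi)^T(a\nabla\psi)_{\text{pot}}$) followed by Cauchy--Schwarz and ellipticity. Your write-up is in fact more complete, since the paper leaves the upper bound and the norm axioms implicit.
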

\begin{proof}
The proof of the left hand side of inequality \eref{skkshg} follows
by observing that
\begin{equation}
\int_\Omega (\nabla \psi)^T a \nabla \psi =\int_\Omega (\nabla \psi)^T (a \nabla \psi)_{pot}
\end{equation}
from which we deduce by Cauchy-Schwarz  inequality that
\begin{equation}
\int_\Omega (\nabla \psi)^T a \nabla \psi \leq \|\nabla
\psi\|_{L^2(\Omega)} \|\psi\|_{a\f}.
\end{equation}
\end{proof}

The proof of the main theorem  of this section will require
\begin{Lemma}\label{Prop1}
Let $V$ be a finite dimensional linear subspace of $H^1_0(\Omega)$. For $f\in L^2(\Omega)$, let $u$ be the solution of
\eref{scalarproblem0}. Then,
\begin{equation}\label{sids5sawqeyassdud}
\sup_{f \in L^2(\Omega)} \inf_{v\in V} \frac{ \|u-v\|_{a\f}}{
\|f\|_{L^2(\Omega)}}= \sup_{w \in H^2(\Omega)\cap H^1_0(\Omega)}
\inf_{v \in V} \frac{\|(\nabla w-a \nabla
v)_{pot}\|_{(L^2(\Omega))^d}}{\|\Delta w\|_{L^2(\Omega)}}
\end{equation}
\end{Lemma}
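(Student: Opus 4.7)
The plan is to set up a bijection between right-hand sides $f\in L^2(\Omega)$ and auxiliary functions $w\in H^2(\Omega)\cap H^1_0(\Omega)$ via the Poisson problem $-\Delta w=f$, and to use this bijection to transport the supremum on the LHS of \eref{sids5sawqeyassdud} into the supremum on the RHS.

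First, I would unpack the LHS. By linearity of the Weyl--Helmholtz decomposition,
\[
\|u-v\|_{a\f}=\|(a\nabla u-a\nabla v)_{pot}\|_{(L^2(\Omega))^d},
\]
so the task reduces to rewriting $(a\nabla u)_{pot}$ in a form that matches the numerator on the RHS. I would then introduce the unique $w\in H^2(\Omega)\cap H^1_0(\Omega)$ satisfying $-\Delta w=f$, which exists by standard elliptic regularity on the $C^2$ domain $\Omega$, with $\|\Delta w\|_{L^2(\Omega)}=\|f\|_{L^2(\Omega)}$ by construction. Since $w\in H^1_0(\Omega)$, the gradient $\nabla w$ lies in $L^2_{pot}(\Omega)$ in the sense of Definition \ref{defWeyl}. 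The key observation is that $a\nabla u$ and $\nabla w$ have the same distributional divergence $-f$, so their difference is divergence-free and is therefore $L^2$-orthogonal to $L^2_{pot}(\Omega)$. This forces
\[
(a\nabla u)_{pot}=\nabla w,
\]
and hence $\|u-v\|_{a\f}=\|(\nabla w-a\nabla v)_{pot}\|_{(L^2(\Omega))^d}$.

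The second step is the bijection itself: $f\mapsto w$ is a linear isomorphism from $L^2(\Omega)$ onto $H^2(\Omega)\cap H^1_0(\Omega)$, and under this change of variable $\|f\|_{L^2(\Omega)}$ is simply relabelled as $\|\Delta w\|_{L^2(\Omega)}$. Combined with the identity above, the sup-inf ratio on the LHS of \eref{sids5sawqeyassdud} coincides term by term with the sup-inf ratio on the RHS.

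The main obstacle, in my view, is the identification $(a\nabla u)_{pot}=\nabla w$. One has to verify that a divergence-free vector field in $(L^2(\Omega))^d$ is orthogonal to every $\nabla\varphi$ with $\varphi\in C_0^\infty(\Omega)$ (a direct integration by parts), and hence its projection onto $L^2_{pot}(\Omega)$ vanishes. This is classical, but it is the only substantive ingredient in the argument; everything else is bookkeeping around the Poisson solve.
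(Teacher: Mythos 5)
Your proposal is correct and follows essentially the same route as the paper's proof: both introduce the auxiliary $w\in H^2(\Omega)\cap H^1_0(\Omega)$ solving $-\Delta w=f$ and reduce the claim to the identity $\|(\nabla w-a\nabla v)_{pot}\|_{(L^2(\Omega))^d}=\|(a\nabla u-a\nabla v)_{pot}\|_{(L^2(\Omega))^d}$. The only difference is that you make explicit the reasoning behind that identity — namely that $a\nabla u-\nabla w$ is divergence-free and hence $L^2$-orthogonal to $L^2_{pot}(\Omega)$, forcing $(a\nabla u)_{pot}=\nabla w$ — which the paper states as a one-line observation without elaboration.
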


\begin{proof}
Since $f\in L^2(\Omega)$, it is known that there exists $w \in
H^2(\Omega)\cap H^1_0(\Omega)$ such that
\begin{equation}
\begin{cases}
    -\Delta w=f \quad  x \in \Omega \\
    w=0 \quad \text{on}\quad \partial \Omega.
    \end{cases}
\end{equation}
We conclude by observing that for $v\in V$,
\begin{equation}\label{skjkjskhe}
 \|(\nabla w-a \nabla
v)_{pot}\|_{(L^2(\Omega))^d}= \|(a \nabla u-a \nabla
v)_{pot}\|_{(L^2(\Omega))^d}.
\end{equation}
\end{proof}

For $V$, a finite dimensional linear subspace of $H^1_0(\Omega)$, we define
\begin{equation}\label{ksjjseddesel3}
(\diiv a \nabla V):=\operatorname{span} \{\diiv(a\nabla v)\,:\,  v\in V\}.
\end{equation}
Note that $(\diiv a \nabla V)$ is a finite dimensional subspace of $H^{-1}(\Omega)$.
\smallskip

The following theorem establishes   the transfer property of the flux norm which is pivotal for our analysis.

\begin{Theorem} \label{sdjhskjdhskdhkhje}{\bf (Transfer property of the flux norm)}
 Let $V'$ and $V$ be finite-dimensional subspaces of $H^1_0(\Omega)$. For $f\in L^2(\Omega)$ let $u$ be the solution of
\eref{scalarproblem0} with conductivity $a$ and  $u'$ be the solution of
\eref{scalarproblem0} with conductivity $a'$. If $(\diiv a \nabla V)=(\diiv a' \nabla V')$, then
\begin{equation}\label{sidasasedsssddaud}
\sup_{f \in L^2(\Omega)} \inf_{v\in V}  \frac{ \|u - v\|_{a\f}}{
\|f\|_{L^2(\Omega)}}= \sup_{f \in L^2(\Omega)} \inf_{v\in V'}  \frac{ \|u' - v\|_{a'\f}}{
\|f\|_{L^2(\Omega)}}.
\end{equation}

\end{Theorem}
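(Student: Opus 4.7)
The plan is to reduce both sides of \eref{sidasasedsssddaud} to the same expression by a two-step reformulation: first use Lemma~\ref{Prop1} to pass from fluxes of the solutions $u,u'$ to a sup-inf over $w \in H^2(\Omega)\cap H^1_0(\Omega)$, and second observe that the hypothesis $(\diiv a \nabla V)=(\diiv a' \nabla V')$ forces the two sets $\{(a\nabla v)_{pot}:v\in V\}$ and $\{(a'\nabla v')_{pot}:v'\in V'\}$ to coincide as subsets of $L^2_{pot}(\Omega)$. Once the second observation is in place, the two sup-infs become identical expressions.

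Concretely, by Lemma~\ref{Prop1} the left-hand side of \eref{sidasasedsssddaud} equals
\begin{equation*}
\sup_{w\in H^2(\Omega)\cap H^1_0(\Omega)} \inf_{v\in V}\frac{\|(\nabla w - a\nabla v)_{pot}\|_{(L^2(\Omega))^d}}{\|\Delta w\|_{L^2(\Omega)}},
\end{equation*}
and analogously for the right-hand side with $V'$ and $a'$. Since $\nabla w$ is itself an element of $L^2_{pot}(\Omega)$, the Weyl-Helmholtz decomposition gives $(\nabla w - a\nabla v)_{pot}=\nabla w - (a\nabla v)_{pot}$. Thus both sides take the form
\begin{equation*}
\sup_{w} \inf_{\eta\in W}\frac{\|\nabla w - \eta\|_{(L^2(\Omega))^d}}{\|\Delta w\|_{L^2(\Omega)}},
\end{equation*}
with $W=\{(a\nabla v)_{pot}:v\in V\}$ on the left and $W=\{(a'\nabla v')_{pot}:v'\in V'\}$ on the right. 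It therefore suffices to show these two subsets of $L^2_{pot}(\Omega)$ are equal.

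The key step is the following uniqueness statement, which I would record as a short lemma: the map $\xi\mapsto \diiv \xi$ is injective from $L^2_{pot}(\Omega)$ into $H^{-1}(\Omega)$. Indeed, any $\xi\in L^2_{pot}(\Omega)$ is the $L^2$-limit of $\nabla f_n$ with $f_n\in C_0^\infty(\Omega)$, hence $\xi=\nabla f$ for some $f\in H^1_0(\Omega)$; if $\diiv\xi=0$ then $f$ is a harmonic function vanishing on $\partial\Omega$, so $f=0$ and $\xi=0$. Consequently, for any $v\in V$ the element $(a\nabla v)_{pot}$ is uniquely determined by $\diiv(a\nabla v)$, since the difference of any two would be divergence-free and potential. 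By linearity, $\{(a\nabla v)_{pot}:v\in V\}$ is the image of $(\diiv a \nabla V)$ under the inverse $g\mapsto \nabla(-\Delta_0)^{-1}(-g)$ of $\xi\mapsto \diiv\xi$ restricted to $L^2_{pot}(\Omega)$. The same identification applies to $V'$ and $a'$, and since by hypothesis $(\diiv a\nabla V)=(\diiv a'\nabla V')$, the two sets coincide.

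The main obstacle is conceptual rather than technical: one must recognize that the flux-norm distance depends on $v$ only through the potential part of $a\nabla v$, and that this potential part is determined by the single distribution $\diiv(a\nabla v)$. Once this is spelled out, the theorem follows without any further computation, and in fact yields the stronger fact that the infimum is attained at corresponding $v,v'$ satisfying $\diiv(a\nabla v)=\diiv(a'\nabla v')$.
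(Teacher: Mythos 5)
Your proof is correct, and it takes a cleaner, more direct route than the paper for this particular statement. The paper proves the theorem as a corollary of Proposition~\ref{Csorsedfesol4}, which establishes the identity
\begin{equation*}
\sup_{f \in L^2(\Omega)} \inf_{v\in V}  \frac{ \|u - v\|_{a\f}}{\|f\|_{L^2(\Omega)}}= \sup_{z \in (\diiv a \nabla V)^{\perp}}\frac{\|z\|_{L^2(\Omega)}}{\|\nabla z\|_{(L^2(\Omega))^d}},
\end{equation*}
by expressing the inner $\inf$ as the distance in $(L^2(\Omega))^d$ from $\nabla w$ to the closed subspace spanned by $\{a\nabla v+\xi : v\in V,\ \diiv\xi=0\}$, identifying the orthogonal complement of that subspace with $\{\nabla z : z\in(\diiv a\nabla V)^\perp\}$, and then using an integration by parts plus the choice $-\Delta w = z$ to obtain equality. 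The transfer property is then read off because $(\diiv a\nabla V)^\perp$, as defined in \eqref{ksjjsedderesesel3}, depends on $(a,V)$ only through the subspace $\diiv(a\nabla V)$ of $H^{-1}(\Omega)$. Your argument bypasses this detour: after applying Lemma~\ref{Prop1} and noting $(\nabla w)_{pot}=\nabla w$, you show directly that the set $\{(a\nabla v)_{pot}: v\in V\}\subset L^2_{pot}(\Omega)$ onto which $\nabla w$ is being approximated is precisely $\nabla\Delta^{-1}\bigl(\diiv(a\nabla V)\bigr)$, where $\Delta^{-1}$ is the Dirichlet inverse Laplacian, and is therefore determined by $\diiv(a\nabla V)$ alone. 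Your injectivity lemma ($\diiv$ is injective on $L^2_{pot}$) is exactly the statement that the Dirichlet Laplacian is injective on $H^1_0$, proved correctly. The one extra fact you invoke implicitly, namely $\diiv\bigl((a\nabla v)_{pot}\bigr)=\diiv(a\nabla v)$, is immediate since the curl part is divergence-free, but you may want to state it explicitly. The paper's less direct route buys Proposition~\ref{Csorsedfesol4}, which is then reused (e.g., in the alternate, Rayleigh-quotient proof of Theorem~\ref{Corol1}); your route buys a shorter proof of the transfer property together with the concrete observation, consistent with Remark~2.3 (equations \eqref{slsdhsasedsssBddaud}--\eqref{slsdhsghdabbud}) of the paper, that the infima are achieved at corresponding $v, v'$ with $\diiv(a\nabla v)=\diiv(a'\nabla v')$.
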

\begin{Remark} \label{transferproperty}The usefulness of \eqref{sidasasedsssddaud} can be illustrated by considering $a'=I$ so that
$\diiv a' \nabla = \Delta$. Then $u' \in H^2$ and therefore  $V'$  can be chosen as, e.g.,  the standard piecewise linear  FEM space with nodal basis $\{\varphi_i\}$. The space $V$ is then defined by its basis $\{\psi_i\}$ determined by
\begin{equation}
\diiv (a \nabla \psi_i) = \Delta \varphi_i
\end{equation}
with  Dirichlet boundary conditions (see details in section \ref{khdkhdlkwhe}). Furthermore,  equation \eqref{sidasasedsssddaud}  shows that  the error estimate  for a problem with arbitrarily rough coefficients is  equal to  the well-known   error estimate for  the Laplace equation.
\end{Remark}

\begin{Remark}
Equation \eref{sidasasedsssddaud} remains valid without the supremum in $f$. More precisely writing $u$ and $u'$ the solutions of
\eref{scalarproblem0} with conductivities $a$ and $a'$ and the same right hand side $f\in L^2(\Omega)$, one has
\begin{equation}\label{slsdhsasedsssBddaud}
\inf_{v\in V}   \|u - v\|_{a\f}=  \inf_{v\in V'}  \|u' - v\|_{a'\f}.
\end{equation}
Equation \eref{slsdhsasedsssBddaud} is obtained by observing that
\begin{equation}\label{slsdhsghdabbud}
 \|u - v\|_{a\f}=  \big\|\nabla \Delta^{-1}(f+\diiv(a\nabla v))\big\|_{L^2(\Omega)}
\end{equation}
\end{Remark}

\begin{Corollary}\label{sjhgdkjshgd} Let $X$ and $V$ be finite-dimensional subspaces of $H^1_0(\Omega)$. For $f\in L^2(\Omega)$ let $u$ be the solution of
\eref{scalarproblem0} with conductivity $a$. If $(\diiv a \nabla V)=(\diiv  \nabla X)$ then
\begin{equation}\label{sidasasedewdsssddaud}
\sup_{f \in L^2(\Omega)} \inf_{v\in V}  \frac{ \|u - v\|_{a\f}}{
\|f\|_{L^2(\Omega)}}= \sup_{w\in H^1_0(\Omega)\cap H^2(\Omega)} \inf_{v\in X}  \frac{ \|\nabla w - \nabla v\|_{(L^2(\Omega))^d}}{
\|\Delta w\|_{L^2(\Omega)}}
\end{equation}
\end{Corollary}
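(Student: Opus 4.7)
The corollary is essentially a direct specialization of the transfer property (Theorem \ref{sdjhskjdhskdhkhje}) to the case $a' = I$, so my plan is simply to reduce to that situation and then re-parametrize the right-hand side using the Laplacian bijection.

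First, I would set $a' = I$ and $V' = X$ in Theorem \ref{sdjhskjdhskdhkhje}. The hypothesis $(\diiv a \nabla V) = (\diiv \nabla X)$ is exactly the compatibility condition $(\diiv a \nabla V) = (\diiv a' \nabla V')$, so the theorem applies and yields
\begin{equation}
\sup_{f \in L^2(\Omega)} \inf_{v\in V} \frac{\|u - v\|_{a\f}}{\|f\|_{L^2(\Omega)}} = \sup_{f \in L^2(\Omega)} \inf_{v\in X} \frac{\|u' - v\|_{I\f}}{\|f\|_{L^2(\Omega)}},
\end{equation}
where $u'$ solves $-\Delta u' = f$ with zero Dirichlet data.

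The next step is to simplify the flux norm when $a' = I$. By Definition \ref{defWeyl}, $L^2_{pot}(\Omega)$ is the $L^2$-closure of $\{\nabla f : f \in C_0^\infty(\Omega)\}$. Hence for any $\psi \in H^1_0(\Omega)$ the field $\nabla \psi$ is already in $L^2_{pot}(\Omega)$, so $(\nabla \psi)_{pot} = \nabla \psi$ and
\begin{equation}
\|\psi\|_{I\f} = \|\nabla \psi\|_{(L^2(\Omega))^d}.
\end{equation}
Applied to $\psi = u' - v$ with $v \in X \subset H^1_0(\Omega)$, this gives $\|u' - v\|_{I\f} = \|\nabla u' - \nabla v\|_{(L^2(\Omega))^d}$.

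Finally, I would re-parametrize the supremum. The map $f \mapsto u'$ defined by $-\Delta u' = f$, $u'|_{\partial\Omega}=0$ is a bijection between $L^2(\Omega)$ and $H^1_0(\Omega) \cap H^2(\Omega)$, and under this identification $\|f\|_{L^2(\Omega)} = \|\Delta u'\|_{L^2(\Omega)}$. Writing $w := u'$ and taking the supremum over $w \in H^1_0(\Omega)\cap H^2(\Omega)$ in place of $f\in L^2(\Omega)$, the right-hand side becomes exactly
\begin{equation}
\sup_{w\in H^1_0(\Omega)\cap H^2(\Omega)} \inf_{v\in X} \frac{\|\nabla w - \nabla v\|_{(L^2(\Omega))^d}}{\|\Delta w\|_{L^2(\Omega)}},
\end{equation}
which is \eref{sidasasedewdsssddaud}. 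There is no real obstacle here: the whole content is the transfer property, and once that is in hand the corollary follows by noting that $(\cdot)_{pot}$ is the identity on gradients and by using elliptic regularity for the Laplacian to trade the supremum over $f$ for a supremum over $w$.
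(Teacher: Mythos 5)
Your proof is correct and follows essentially the same route as the paper: set $a' = I$ in Theorem \ref{sdjhskjdhskdhkhje}, then use the Laplacian bijection to swap the supremum over $f \in L^2(\Omega)$ for one over $w \in H^1_0(\Omega)\cap H^2(\Omega)$. The only cosmetic difference is that you inline this last step (together with the observation $(\nabla\psi)_{pot} = \nabla\psi$ for $\psi \in H^1_0(\Omega)$) rather than citing it as the $a=I$ case of Lemma \ref{Prop1}, which is what the paper does.
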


Equation \eqref{sidasasedewdsssddaud} can be obtained by  setting $a'=I$ in theorem \ref{sdjhskjdhskdhkhje} and applying lemma \ref{Prop1}.

Theorem \ref{sdjhskjdhskdhkhje} is  obtained from the following proposition by noting that the
right hand side of equation \eqref{sidasawqeyadsedsssddaud} is the same for pairs $(a,V)$ and $(a',V')$
whenever $\diiv (a \nabla V) = \diiv (a' \nabla V')$.
\begin{Proposition}\label{Csorsedfesol4}
For $f\in L^2(\Omega)$ let $u$ be the solution of
\eref{scalarproblem0}. Then,
\begin{equation}\label{sidasawqeyadsedsssddaud}
\sup_{f \in L^2(\Omega)} \inf_{v\in V}  \frac{ \|u - v\|_{a\f}}{
\|f\|_{L^2(\Omega)}}= \sup_{z \in (\diiv a \nabla V)^{\perp}}\frac{\|
z\|_{L^2(\Omega)}}{\|\nabla z\|_{(L^2(\Omega))^d}},
\end{equation}
where
\begin{equation}\label{ksjjsedderesesel3}
(\diiv a \nabla V)^{\perp}:=\{z\in H^1_0(\Omega)\,:\, \forall v\in V, (\nabla z
,a\nabla v)=0 \}.
\end{equation}
\end{Proposition}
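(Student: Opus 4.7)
The plan is to rewrite the left-hand side by Hilbert-space duality inside $L^2_{pot}(\Omega)$, and then interchange the two resulting suprema over $f$ and $z$. First, let $w\in H^2(\Omega)\cap H^1_0(\Omega)$ be the unique solution of $-\Delta w=f$. Since $\diiv(a\nabla u)=-f=\Delta w$ and every element of $L^2_{pot}(\Omega)$ is of the form $\nabla z$ with $z\in H^1_0(\Omega)$, the potential part of $a\nabla u$ must equal $\nabla w$. Consequently, for every $v\in V$,
\begin{equation*}
\|u-v\|_{a\f}=\|\nabla w-(a\nabla v)_{pot}\|_{(L^2(\Omega))^d}.
\end{equation*}
Denoting $K:=\{(a\nabla v)_{pot}:v\in V\}$, a finite-dimensional (hence closed) subspace of $L^2_{pot}(\Omega)$, we see that $\inf_{v\in V}\|u-v\|_{a\f}$ is exactly the $L^2$-distance from $\nabla w\in L^2_{pot}$ to $K$.

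By the projection theorem applied in the Hilbert space $L^2_{pot}(\Omega)$, this distance equals $\sup\{(\nabla w,g)_{L^2}:g\in L^2_{pot},\ g\perp K,\ \|g\|_{L^2}\leq 1\}$. Writing $g=\nabla z$ with $z\in H^1_0(\Omega)$ and noting that $(\nabla z,(a\nabla v)_{pot})_{L^2}=(\nabla z,a\nabla v)_{L^2}$ (since the divergence-free part of $a\nabla v$ is $L^2$-orthogonal to any element of $L^2_{pot}$), the condition $g\perp K$ reduces exactly to $z\in(\diiv a\nabla V)^\perp$ in the sense of \eqref{ksjjsedderesesel3}. An integration by parts then gives $(\nabla w,\nabla z)_{L^2}=(f,z)_{L^2}$. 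Releasing the unit-norm constraint by homogeneity yields
\begin{equation*}
\inf_{v\in V}\|u-v\|_{a\f}=\sup_{z\in(\diiv a\nabla V)^\perp}\frac{(f,z)_{L^2}}{\|\nabla z\|_{(L^2(\Omega))^d}}.
\end{equation*}

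Finally, I divide by $\|f\|_{L^2(\Omega)}$ and take the supremum over $f\in L^2(\Omega)$. Since the constraint set $(\diiv a\nabla V)^\perp$ does not depend on $f$, the two suprema are independent and may be interchanged; the inner supremum $\sup_f (f,z)_{L^2}/\|f\|_{L^2}$ equals $\|z\|_{L^2}$ by Cauchy--Schwarz (with equality at $f=z/\|z\|_{L^2}$), which produces the identity \eqref{sidasawqeyadsedsssddaud}. The only delicate step is the clean formulation of the duality inside $L^2_{pot}$ and the identification of its orthogonal complement of $K$ with $\{\nabla z:z\in(\diiv a\nabla V)^\perp\}$; once that identification is secured, the rest is integration by parts and the interchange of two independent suprema.
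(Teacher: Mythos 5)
Your proof is correct and follows essentially the same route as the paper: identify the left-hand side with the $L^2_{pot}$-distance from $\nabla w$ to $\{(a\nabla v)_{pot}:v\in V\}$, dualize it via orthogonal projection to a supremum over $(\diiv a\nabla V)^\perp$, integrate by parts, and interchange the two suprema to evaluate the inner one in $f$. The paper phrases the dualization as a simultaneous infimum over $v\in V$ and divergence-free $\xi$ rather than projecting to $L^2_{pot}$ first, but this is the same computation in different clothes, and the concluding step (choosing $-\Delta w = z$) is exactly your $\sup_f (f,z)/\|f\|_{L^2}=\|z\|_{L^2}$.
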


\begin{proof}
For $w \in H^2(\Omega)$, define
\begin{equation}\label{sidseedsaswceweawyassdud}
J(w):=\inf_{v\in V}\|(\nabla w-a \nabla
v)_{pot}\|_{(L^2(\Omega))^d}.
\end{equation}
Observe that
\begin{equation}\label{sidseewe3daeswsyedsasesdud}
J(w)=\inf_{v\in V, \xi\in (L^2(\R^d))^d\,:\, \diiv(\xi)=0}\|\nabla w-a
\nabla v-\xi\|_{(L^2(\Omega))^d}.
\end{equation}
Additionally, observing that the space spanned by $\nabla z$ for $z\in(\diiv a \nabla V)^{\perp}$ is
the orthogonal complement (in $(L^2(\Omega))^d$) of the space
spanned by $a \nabla v+\xi$, we obtain that
\begin{equation}\label{sidseewe3daswsweyasesdud}
J(w)=\sup_{z \in (\diiv a \nabla V)^{\perp}}\frac{(\nabla w,\nabla z)}{\|\nabla
z\|_{(L^2(\Omega))^d}}.
\end{equation}
Integrating by parts and applying the Cauchy-Schwarz  inequality
yields
\begin{equation}\label{sidseewe3daswsyaewwlklesesdud}
J(w)\leq \|\Delta w\|_{L^2(\Omega)} \sup_{z \in (\diiv a \nabla V)^{\perp}}\frac{\|
z\|_{L^2(\Omega)}}{\|\nabla z\|_{(L^2(\Omega))^d}}.
\end{equation}
which proves
\begin{equation}\label{sidasawqesdsyadsedsssddaud}
\sup_{f \in L^2(\Omega)} \inf_{v\in V}  \frac{ \|u - v\|_{a\f}}{
\|f\|_{L^2(\Omega)}}\leq \sup_{z \in (\diiv a \nabla V)^{\perp}}\frac{\|
z\|_{L^2(\Omega)}}{\|\nabla z\|_{(L^2(\Omega))^d}},
\end{equation}
Dividing by $\| \Delta w \|_{L^2(\Omega)}$, integrating by parts, and taking the supremum
over $w \in H^2(\Omega)\cap H^1_0(\Omega)$, we get
\begin{equation}\label{sidseewe3dddjkjkassdsdwsyasesdud}
\sup_{w \in H^2(\Omega)\cap H^1_0(\Omega)} \frac{J(w)}{\| \Delta w \|_{L^2(\Omega)}}=\sup_{z \in (\diiv a \nabla V)^{\perp}} \sup_{w \in H^2(\Omega)\cap H^1_0(\Omega)} -\frac{(\Delta w, z)}{\|\nabla
z\|_{(L^2(\Omega))^d} \| \Delta w \|_{L^2(\Omega)}}.
\end{equation}
we conclude the theorem by choosing $-\Delta w=z$.
\end{proof}

The transfer property \eqref{sidasasedsssddaud}  for  solutions can be complemented by an analogous property for fluxes. To this end, for  a finite dimensional linear subspace
$\V\subset (L^2(\Omega))^d$ define
\begin{equation}\label{ksjjsedddddeesel3}
(\diiv a \V):= \{\diiv(a\zeta)\,:\,  \zeta\in \V\}.
\end{equation}
Observe that $(\diiv a \V)$ is a finite dimensional subspace of $H^{-1}(\Omega)$. The proof of the following theorem is  similar to the proof of theorem \ref{sdjhskjdhskdhkhje}.

\begin{Theorem}\label{sdjhskjdhskdhsdsdsdsxkhje} {\bf (Transfer property for fluxes)}
Let $\V'$ and $\V$ be finite-dimensional subspaces of $(L^2(\Omega))^d$. For $f\in L^2(\Omega)$ let $u$ be the solution of
\eref{scalarproblem0} with conductivity $a$ and  $u'$ be the solution of
\eref{scalarproblem0} with conductivity $a'$. If $(\diiv a \V)=(\diiv a' \V')$ then
\begin{equation}\label{sidasasedsssdeeewdaud}
\sup_{f \in L^2(\Omega)} \inf_{\zeta\in \V}  \frac{ \|(a(\nabla u - \zeta))_{pot}\|_{(L^2(\Omega))^d}}{
\|f\|_{L^2(\Omega)}}= \sup_{f \in L^2(\Omega)} \inf_{\zeta\in \V'}  \frac{ \|(a'(\nabla u' - \zeta))_{pot}\|_{(L^2(\Omega))^d}}{
\|f\|_{L^2(\Omega)}}
\end{equation}
\end{Theorem}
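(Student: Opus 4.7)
The plan is to mimic the proof of Proposition \ref{Csorsedfesol4} step by step, the point being that the argument there never truly uses that the competing fields are of the form $a\nabla v$ with $v\in V\subset H^1_0(\Omega)$; it uses only that they form a finite dimensional subspace of $(L^2(\Omega))^d$ whose divergences lie in the prescribed subspace of $H^{-1}(\Omega)$. The target intermediate identity, the flux analogue of \eqref{sidasawqeyadsedsssddaud}, is
\begin{equation}
\sup_{f\in L^2(\Omega)}\inf_{\zeta\in\V}\frac{\|(a(\nabla u-\zeta))_{pot}\|_{(L^2(\Omega))^d}}{\|f\|_{L^2(\Omega)}}
=\sup_{z\in(\diiv a\V)^{\perp}}\frac{\|z\|_{L^2(\Omega)}}{\|\nabla z\|_{(L^2(\Omega))^d}},
\end{equation}
where $(\diiv a\V)^{\perp}:=\{z\in H^1_0(\Omega):\,\langle\diiv(a\zeta),z\rangle_{H^{-1},H^1_0}=0\text{ for every }\zeta\in\V\}$. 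Since the right-hand side depends on the data only through the finite-dimensional subspace $\diiv a\V\subset H^{-1}(\Omega)$ defined in \eqref{ksjjsedddddeesel3}, the hypothesis $\diiv a\V=\diiv a'\V'$ then yields \eqref{sidasasedsssdeeewdaud} immediately.

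As in Lemma \ref{Prop1}, introduce $w\in H^2(\Omega)\cap H^1_0(\Omega)$ with $-\Delta w=f$. The defining property of the Weyl--Helmholtz decomposition, together with $\diiv(a\nabla u)=-f=\Delta w$, gives $(a\nabla u)_{pot}=\nabla w$. Hence
\begin{equation}
\|(a(\nabla u-\zeta))_{pot}\|_{(L^2(\Omega))^d}=\|\nabla w-(a\zeta)_{pot}\|_{(L^2(\Omega))^d}=\|(\nabla w-a\zeta)_{pot}\|_{(L^2(\Omega))^d},
\end{equation}
so that the functional to be analyzed is exactly the analogue of \eqref{sidseedsaswceweawyassdud},
\begin{equation}
J(w):=\inf_{\zeta\in\V}\|(\nabla w-a\zeta)_{pot}\|_{(L^2(\Omega))^d},
\end{equation}
and $\|f\|_{L^2(\Omega)}=\|\Delta w\|_{L^2(\Omega)}$ turns the quotient of interest into $J(w)/\|\Delta w\|_{L^2(\Omega)}$, as in the scalar transfer theorem.

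Next I would replay the orthogonal-decomposition calculation of Proposition \ref{Csorsedfesol4}, with the single substitution $a\nabla v\mapsto a\zeta$. Writing $J(w)=\inf_{\zeta\in\V,\,\xi\in L^2_{curl}(\Omega)}\|\nabla w-a\zeta-\xi\|_{(L^2(\Omega))^d}$, the orthogonal complement in $(L^2(\Omega))^d$ of the closed subspace $\{a\zeta+\xi\}$ equals $\{\nabla z:z\in(\diiv a\V)^{\perp}\}$, yielding the dual representation
\begin{equation}
J(w)=\sup_{z\in(\diiv a\V)^{\perp}}\frac{(\nabla w,\nabla z)_{L^2}}{\|\nabla z\|_{(L^2(\Omega))^d}}.
\end{equation}
Integrating by parts to get $(\nabla w,\nabla z)=(f,z)$, applying Cauchy--Schwarz, and saturating the resulting bound by taking $f=-\Delta w=z$ (a valid choice since $z\in H^1_0\subset L^2$), closes the identity exactly as at the end of the proof of Proposition \ref{Csorsedfesol4}.

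The one point requiring mild care — and hence the only real obstacle — is the passage from $(\nabla z,a\zeta)_{L^2}=0$ to $z\in(\diiv a\V)^{\perp}$: because $\zeta$ is now merely in $(L^2(\Omega))^d$ rather than a gradient of an $H^1_0$ function, $\diiv(a\zeta)$ must be understood as an element of $H^{-1}(\Omega)$ and the integration by parts must be read as the duality pairing $\langle\diiv(a\zeta),z\rangle_{H^{-1},H^1_0}$. This is well-defined since $a\zeta\in(L^2(\Omega))^d$ and $z\in H^1_0(\Omega)$, and it is consistent with the definition \eqref{ksjjsedddddeesel3} of $\diiv a\V$, so the rest of the argument goes through unchanged.
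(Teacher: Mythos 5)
Your proof is correct and is essentially the argument the paper intends: the paper dismisses this result with ``the proof is similar to the proof of theorem \ref{sdjhskjdhskdhkhje},'' and you have correctly carried out that plan by replaying Lemma \ref{Prop1} and Proposition \ref{Csorsedfesol4} with $a\nabla v$ ($v\in V$) replaced by $a\zeta$ ($\zeta\in\V$), including the two points that need care --- that $(a\nabla u)_{pot}=\nabla w$ so the quotient reduces to $J(w)/\|\Delta w\|_{L^2(\Omega)}$, and that the orthogonality condition $(\nabla z,a\zeta)=0$ must now be read as the $H^{-1}$--$H^1_0$ pairing $\langle\diiv(a\zeta),z\rangle$ since $\zeta$ need not be a gradient.
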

Theorem \ref{sdjhskjdhskdhsdsdsdsxkhje} will be used in section \ref{lkjlsdhkj} for obtaining error estimates on
 theoretical non-conforming Galerkin solutions of \eref{scalarproblem0}.

 \begin{Corollary} Let $\V$ be a finite-dimensional subspace of $(L^2(\Omega))^d$ and
$X$ a finite-dimensional subspace of $H^1_0(\Omega)$. For $f\in L^2(\Omega)$ let $u$ be the solution of
\eref{scalarproblem0} with conductivity $a$. If $(\diiv a \V)=(\diiv  \nabla X)$ then
\begin{equation}\label{sidasasedsssffxddaud}
\sup_{f \in L^2(\Omega)} \inf_{\zeta\in \V}  \frac{ \|(a(\nabla u - \zeta))_{pot}\|_{(L^2(\Omega))^d}}{
\|f\|_{L^2(\Omega)}}= \sup_{w\in H^1_0(\Omega)\cap H^2(\Omega)} \inf_{v\in X}  \frac{ \|\nabla w - \nabla v\|_{(L^2(\Omega))^d}}{
\|\Delta w\|_{L^2(\Omega)}}
\end{equation}
\end{Corollary}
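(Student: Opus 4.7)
The plan is to reduce this corollary to Theorem \ref{sdjhskjdhskdhsdsdsdsxkhje} by making the right choice of reference conductivity and reference flux space, exactly as the earlier Corollary \ref{sjhgdkjshgd} is reduced to Theorem \ref{sdjhskjdhskdhkhje}. Specifically, I would set $a':=I$ and take $\V':=\nabla X=\{\nabla v:v\in X\}$, which is a finite dimensional subspace of $(L^2(\Omega))^d$ because $X$ is finite dimensional. Note that under this choice $(\diiv a'\V')=(\diiv\nabla X)$, so the hypothesis $(\diiv a\V)=(\diiv\nabla X)$ becomes exactly the assumption $(\diiv a\V)=(\diiv a'\V')$ required to invoke Theorem \ref{sdjhskjdhskdhsdsdsdsxkhje}.

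Next I would simplify the right hand side produced by Theorem \ref{sdjhskjdhskdhsdsdsdsxkhje}. With $a'=I$ and $\zeta=\nabla v$ for $v\in X$, both $a'\nabla u'=\nabla u'$ and $a'\zeta=\nabla v$ lie in $L^2_{pot}(\Omega)$ (since $X\subset H^1_0(\Omega)$ and $\nabla H^1_0(\Omega)\subset L^2_{pot}(\Omega)$ by Definition \ref{defWeyl}). Therefore the Weyl-Helmholtz projection acts as the identity on $a'(\nabla u'-\zeta)$, giving
\begin{equation*}
\|(a'(\nabla u'-\zeta))_{pot}\|_{(L^2(\Omega))^d}=\|\nabla u'-\nabla v\|_{(L^2(\Omega))^d}.
\end{equation*}

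Finally I would rewrite the supremum over $f\in L^2(\Omega)$ as a supremum over $w\in H^2(\Omega)\cap H^1_0(\Omega)$ using the isomorphism $f\leftrightarrow w$ defined by $-\Delta w=f$ with $w|_{\partial\Omega}=0$; this is a bijection (thanks to the $C^2$ regularity of $\partial\Omega$ invoked in the introduction) with $\|f\|_{L^2(\Omega)}=\|\Delta w\|_{L^2(\Omega)}$ and $u'=w$. Substituting yields exactly the right hand side of \eref{sidasasedsssffxddaud}.

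The argument is essentially a change of variables, so there is no serious obstacle; the only point requiring a moment of care is verifying that $\nabla X\subset L^2_{pot}(\Omega)$ so that the potential projection drops out, and that the right hand side of the identity delivered by Theorem \ref{sdjhskjdhskdhsdsdsdsxkhje} indeed collapses to a pure $\nabla w-\nabla v$ difference rather than its potential part. Both are immediate from the definitions, so the corollary follows at once from Theorem \ref{sdjhskjdhskdhsdsdsdsxkhje}.
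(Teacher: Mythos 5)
Your argument is correct and follows the same route the paper intends for this corollary (and explicitly describes for the analogous Corollary \ref{sjhgdkjshgd}): set $a'=I$, take $\V'=\nabla X$ in Theorem \ref{sdjhskjdhskdhsdsdsdsxkhje}, use that $\nabla X\subset L^2_{pot}(\Omega)$ and $\nabla w\in L^2_{pot}(\Omega)$ to drop the potential projection, and reparametrize $f\in L^2(\Omega)$ by $w\in H^2(\Omega)\cap H^1_0(\Omega)$ via $-\Delta w=f$, which is exactly the substitution carried out in Lemma \ref{Prop1}. No gaps.
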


\begin{Remark}
The analysis performed in this section and in the following one can be naturally extended to other types of boundary conditions (nonzero Neumann or Dirichlet). To support our claim, we will provide this extension in the scalar case with non-zero Neumann boundary conditions. We refer to subsection
\ref{NonZeroBC} for that extension.
\end{Remark}

\subsection{Vectorial case.}
 For
$k\in
(L^2(\Omega))^{d\times d}$, denote by $k_{pot}$
 the potential portion of the Weyl-Helmholtz decomposition of $k$
(the orthogonal projection of $k$ onto the closure of the space
$\{\nabla f\;:\; f\in (C_0^\infty(\Omega))^d\}$ in
$(L^2(\Omega))^{d\times d}$).
Define
\begin{equation}\label{jdhkwjhdje}
\|\psi\|_{C\f}:=\|(C:
\varepsilon(\psi))_{pot}\|_{(L^2(\Omega))^{d\times d}}.
\end{equation}

\begin{Remark}
Because of the symmetries of the elasticity tensor $C$, one has $\forall f\in (C_0^\infty(\Omega))^d$
\begin{equation}
\Big(\nabla f,(C:
\varepsilon(\psi))_{pot}\Big)_{(L^2(\Omega))^{d\times d}}=\Big(\varepsilon( f),(C:
\varepsilon(\psi))_{pot}\Big)_{(L^2(\Omega))^{d\times d}}
\end{equation}
from which it follows that definition \ref{jdhkwjhdje} would be the same if the projection was made on the space of symmetrized gradients.
\end{Remark}

\begin{Proposition}\label{Prop2a}
$\|.\|_{C\f}$ is a norm on $(H^1_0(\Omega))^{d}$.
Furthermore, for all
$\psi \in (H^1_0(\Omega))^d$
\begin{equation}\label{skkzshg}
\lambda_{\min}(C) \|\varepsilon(\psi)\|_{(L^2(\Omega))^{d\times d}}
\leq \|\psi\|_{C\f} \leq \lambda_{\max}(C)
\|\varepsilon(\psi)\|_{(L^2(\Omega))^{d\times d}}.
\end{equation}
\end{Proposition}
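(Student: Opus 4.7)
My plan is to mimic the scalar proof of Proposition \ref{Prop0} essentially verbatim, using the orthogonality of the Weyl--Helmholtz decomposition together with the symmetries of $C$ encoded in the Remark immediately preceding the statement. The three ingredients of the proposition (it is a norm; the upper bound; the lower bound) will be handled separately, with the lower bound being the substantive one.

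For the upper bound, I would simply observe that orthogonal projection onto $L^2_{pot}$ is a contraction in $(L^2(\Omega))^{d\times d}$, so
\begin{equation*}
\|\psi\|_{C\f}=\|(C:\varepsilon(\psi))_{pot}\|_{(L^2(\Omega))^{d\times d}}\le \|C:\varepsilon(\psi)\|_{(L^2(\Omega))^{d\times d}}\le \lambda_{\max}(C)\,\|\varepsilon(\psi)\|_{(L^2(\Omega))^{d\times d}},
\end{equation*}
using the pointwise bound $|C(x):\tau|\le \lambda_{\max}(C)|\tau|$ for symmetric $\tau$.

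For the lower bound, I would first note that for $\psi\in (H^1_0(\Omega))^d$ the matrix-valued field $\nabla\psi$ lies in the closure of $\{\nabla f:f\in (C_0^\infty(\Omega))^d\}$, hence orthogonal projection kills the divergence-free part:
\begin{equation*}
(\nabla\psi,\,C:\varepsilon(\psi))_{(L^2)^{d\times d}}=(\nabla\psi,\,(C:\varepsilon(\psi))_{pot})_{(L^2)^{d\times d}}.
\end{equation*}
The key step, and the only place where work beyond the scalar case is needed, is to replace $\nabla\psi$ by $\varepsilon(\psi)$ on both sides. On the left, this is the usual consequence of the minor symmetry $C_{ijkl}=C_{jikl}$, which forces $C:\varepsilon(\psi)$ to be symmetric so that its contraction with the antisymmetric part of $\nabla\psi$ vanishes. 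On the right, this is precisely the content of the Remark preceding the proposition (extended from $(C_0^\infty)^d$ to $(H^1_0)^d$ by density). Combining the two identities gives
\begin{equation*}
\int_\Omega \varepsilon(\psi):C:\varepsilon(\psi)\,dx=(\varepsilon(\psi),\,(C:\varepsilon(\psi))_{pot})_{(L^2)^{d\times d}},
\end{equation*}
and Cauchy--Schwarz together with $\lambda_{\min}(C)\,|\varepsilon(\psi)|^2\le \varepsilon(\psi):C:\varepsilon(\psi)$ yields the lower bound in \eqref{skkzshg}.

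Finally, for the norm axioms, homogeneity and the triangle inequality are inherited from the $(L^2)^{d\times d}$-norm because $\psi\mapsto (C:\varepsilon(\psi))_{pot}$ is linear. Positive-definiteness is the only point where an extra ingredient enters: if $\|\psi\|_{C\f}=0$, the identity of the previous paragraph forces $\varepsilon(\psi)=0$ a.e., and then Korn's inequality on $(H^1_0(\Omega))^d$ (which holds on any bounded domain with Lipschitz boundary, in particular on our $C^2$-domain $\Omega$) gives $\nabla\psi=0$, hence $\psi=0$. The main obstacle is just a notational one: keeping track of where the symmetries of $C$ are invoked and verifying that the density argument behind the Remark really extends the identity from test functions to all of $(H^1_0(\Omega))^d$; the analytical content is routine once that is in place.
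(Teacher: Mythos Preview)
Your proposal is correct and follows essentially the same approach as the paper: the lower bound comes from the identity $\int_\Omega \varepsilon(\psi):C:\varepsilon(\psi)=(\varepsilon(\psi),(C:\varepsilon(\psi))_{pot})$ combined with Cauchy--Schwarz, and the norm property then follows from this lower bound together with Korn's inequality. The paper's proof is terser (it omits the upper bound and the detailed justification via the Remark for passing from $\nabla\psi$ to $\varepsilon(\psi)$), but the logical skeleton is identical.
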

\begin{proof}
The proof of the left hand side of inequality \eref{skkzshg} follows
by observing that
\begin{equation}
\int_\Omega ( \varepsilon(\psi))^T :C: \varepsilon( \psi) \leq
\|\varepsilon(\psi)\|_{(L^2(\Omega))^{d\times d}} \|\psi\|_{C\f}.
\end{equation}
The fact that $\|\psi\|_{C\f}$ is a norm follows from the left hand
side of inequality \eref{skkzshg} and Korn's inequality
\cite{MR961258}: i.e., for all $\psi \in (H^1_0(\Omega))^d$,
\begin{equation}\label{sksekdaseshg}
 \|\nabla \psi\|_{(L^2(\Omega))^{d\times d}} \leq \sqrt{2} \|\varepsilon(\psi)\|_{(L^2(\Omega))^{d\times
d}}.
\end{equation}
\end{proof}

For $V$, a finite dimensional linear subspace of $(H^1_0(\Omega))^d$, we define
\begin{equation}\label{ksjjseddesdsdsel3}
(\diiv C :\varepsilon(V)):=\operatorname{span} \{\diiv(C:\varepsilon(v))\,:\,  v\in V\}.
\end{equation}
Observe that $(\diiv C:\varepsilon(V))$ is a finite dimensional subspace of $(H^{-1}(\Omega))^d$. Similarly for
$X$, a finite dimensional linear subspace of $(H^1_0(\Omega))^d$, we define
\begin{equation}\label{ksjjseddesddddesdsel3}
\Delta X:=\operatorname{span} \{\Delta v\,:\,  v\in X\}.
\end{equation}

\begin{Theorem}\label{sdjhskzzhdsdkhje}
Let $V'$ and $V$ be finite-dimensional subspaces of $(H^1_0(\Omega))^d$. For $b\in (L^2(\Omega))^d$ let $u$ be the solution of
\eref{elasticity} with elasticity $C$ and  $u'$ be the solution of
\eref{elasticity} with elasticity $C'$. If $(\diiv C: \varepsilon( V))=(\diiv C': \varepsilon( V'))$ then
\begin{equation}\label{sizsssddzaud}
\sup_{b \in (L^2(\Omega))^d} \inf_{v\in V}  \frac{ \|u - v\|_{C\f}}{
\|b\|_{(L^2(\Omega))^d}}= \sup_{b \in (L^2(\Omega))^d} \inf_{v\in V'}  \frac{ \|u' - v\|_{C'\f}}{
\|b\|_{(L^2(\Omega))^d}}
\end{equation}
\end{Theorem}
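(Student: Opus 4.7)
The strategy is to mirror the scalar derivation (Proposition \ref{Csorsedfesol4} combined with Lemma \ref{Prop1}) and reduce the left-hand side of \eref{sizsssddzaud} to a quantity that depends only on $(\diiv C:\varepsilon(V))$ and not on the pair $(C,V)$ itself; the conclusion then follows since the same reduction applies to $(C',V')$.

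First, I would introduce the componentwise auxiliary problem: for $b\in (L^2(\Omega))^d$, let $w\in (H^2(\Omega)\cap H^1_0(\Omega))^d$ solve $-\Delta w=b$ componentwise. Since $-\diiv(C:\varepsilon(u))=b=-\diiv(\nabla w)$, the matrix field $\nabla w - C:\varepsilon(u)$ is row-wise divergence-free, so its potential part vanishes. Hence, for every $v\in V$,
\begin{equation}
\bigl\|(C:\varepsilon(u-v))_{pot}\bigr\|_{(L^2(\Omega))^{d\times d}}=\bigl\|(\nabla w - C:\varepsilon(v))_{pot}\bigr\|_{(L^2(\Omega))^{d\times d}}.
\end{equation}
Writing $J(w)$ for the infimum over $v\in V$ of the right-hand side, and using that $\|b\|_{(L^2(\Omega))^d}=\|\Delta w\|_{(L^2(\Omega))^d}$ with $w\mapsto -\Delta w$ surjective from $(H^2\cap H^1_0)^d$ onto $(L^2)^d$, the left-hand side of \eref{sizsssddzaud} becomes $\sup_{w}J(w)/\|\Delta w\|_{(L^2(\Omega))^d}$.

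Second, I would dualize $J(w)$ exactly as in the scalar case. Since the orthogonal complement in $(L^2(\Omega))^{d\times d}$ of the span of $\{C:\varepsilon(v)+\xi : v\in V,\, \diiv\xi=0\}$ consists of those matrix fields of the form $\nabla z$ with $z\in (H^1_0(\Omega))^d$ satisfying $(\nabla z,C:\varepsilon(v))=0$ for all $v\in V$, i.e.\ with $z$ in the space
\begin{equation}
(\diiv C:\varepsilon(V))^{\perp}:=\{z\in (H^1_0(\Omega))^d : \langle z,\diiv(C:\varepsilon(v))\rangle=0\ \forall v\in V\},
\end{equation}
I obtain
\begin{equation}
J(w)=\sup_{z\in (\diiv C:\varepsilon(V))^{\perp}}\frac{(\nabla w,\nabla z)_{(L^2(\Omega))^{d\times d}}}{\|\nabla z\|_{(L^2(\Omega))^{d\times d}}}.
\end{equation}
An integration by parts gives $(\nabla w,\nabla z)=-(\Delta w,z)$, and choosing $-\Delta w=z$ (which is admissible since the ratio involving $w$ is then saturated) yields
\begin{equation}
\sup_{b\in(L^2(\Omega))^d}\inf_{v\in V}\frac{\|u-v\|_{C\f}}{\|b\|_{(L^2(\Omega))^d}}
=\sup_{z\in (\diiv C:\varepsilon(V))^{\perp}}\frac{\|z\|_{(L^2(\Omega))^d}}{\|\nabla z\|_{(L^2(\Omega))^{d\times d}}}.
\end{equation}

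Third, the right-hand side of this identity depends on $(C,V)$ only through the subspace $(\diiv C:\varepsilon(V))\subset (H^{-1}(\Omega))^d$, because its orthogonal complement (viewed in $(H^1_0)^d$ with the $(\nabla\cdot,\nabla\cdot)$ pairing) is determined by that subspace. Applying the same formula to $(C',V')$ and using the hypothesis $(\diiv C:\varepsilon(V))=(\diiv C':\varepsilon(V'))$ gives \eref{sizsssddzaud}.

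The only step I expect to require genuine care beyond the scalar template is the dualization: I must verify that the Weyl--Helmholtz decomposition of $(L^2(\Omega))^{d\times d}$ (applied rowwise) really does identify the orthogonal complement of the divergence-free matrix fields with the span of gradients $\nabla z$, $z\in (H^1_0(\Omega))^d$, and that the resulting test functions $z$ lie in a space on which Korn's inequality (used in Proposition \ref{Prop2a}) keeps $\|\nabla z\|$ equivalent to a meaningful norm. Both are standard but must be invoked explicitly to justify the dual representation and to ensure that the supremum over $z$ is the same whether one uses $\nabla z$ or $\varepsilon(z)$, which is what makes the identity genuinely intrinsic to the subspace $(\diiv C:\varepsilon(V))$.
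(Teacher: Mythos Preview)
Your proposal is correct and follows exactly the route the paper indicates: the paper simply states that the proof is analogous to the scalar case (Theorem~\ref{sdjhskjdhskdhkhje} via Lemma~\ref{Prop1} and Proposition~\ref{Csorsedfesol4}), and you have carried out precisely that vectorial transcription, arriving at the intrinsic formula $\sup_{z\in(\diiv C:\varepsilon(V))^\perp}\|z\|_{(L^2)^d}/\|\nabla z\|_{(L^2)^{d\times d}}$. Your closing caution about Korn's inequality is unnecessary: the dualization naturally yields $\nabla z$ (not $\varepsilon(z)$), and the ratio $\|z\|_{L^2}/\|\nabla z\|_{L^2}$ is already determined by the subspace $(\diiv C:\varepsilon(V))$ without any appeal to Korn.
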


\begin{Corollary}\label{corollary1}
 Let $X$ and $V$ be finite-dimensional subspaces of $(H^1_0(\Omega))^d$. For $b\in (L^2(\Omega))^d$ let $u$ be the solution of
\eref{elasticity} with elasticity tensor $C$. If $(\diiv C: \varepsilon( V))=\Delta X$ then
\begin{equation}\label{sidasasedzsddaud}
\sup_{b \in (L^2(\Omega))^d} \inf_{v\in V}  \frac{ \|u - v\|_{C\f}}{
\|b\|_{(L^2(\Omega))^d}}= \sup_{w\in (H^1_0(\Omega)\cap H^2(\Omega))^d} \inf_{v\in X}  \frac{ \|\nabla w - \nabla v\|_{(L^2(\Omega))^{d\times d}}}{
\|\Delta w\|_{(L^2(\Omega))^d}}
\end{equation}
\end{Corollary}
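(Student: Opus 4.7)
The plan is to mirror the strategy used for the scalar Corollary \ref{sjhgdkjshgd}, but since the vector Laplacian is not of the form $\diiv(C'\!:\!\varepsilon(\cdot))$ for any admissible elasticity tensor $C'$, I cannot simply substitute ``$C'=I$'' into Theorem \ref{sdjhskzzhdsdkhje}. Instead, I would prove the vectorial analogue of Proposition \ref{Csorsedfesol4} and apply it independently to the two sides of \eref{sidasasedzsddaud}, identifying them through the hypothesis $(\diiv C\!:\!\varepsilon(V))=\Delta X$.

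First, I would establish the identity
\begin{equation*}
\sup_{b\in (L^2(\Omega))^d}\inf_{v\in V}\frac{\|u-v\|_{C\f}}{\|b\|_{(L^2(\Omega))^d}}=\sup_{z\in (\diiv C:\varepsilon(V))^{\perp}}\frac{\|z\|_{(L^2(\Omega))^d}}{\|\nabla z\|_{(L^2(\Omega))^{d\times d}}},
\end{equation*}
with the vectorial definition $(\diiv C\!:\!\varepsilon(V))^\perp:=\{z\in (H^1_0(\Omega))^d : (\varepsilon(z),C\!:\!\varepsilon(v))_{L^2}=0 \text{ for every } v\in V\}$. This follows the scalar proof line by line: observing that $(C\!:\!\varepsilon(u))_{pot}=\nabla w$ whenever $-\Delta w=b$ (both sides are matrix-valued gradients in $(L^2)^{d\times d}$ with the same divergence, and the Dirichlet Laplacian is injective on $(H^1_0)^d$) gives the vectorial analogue of Lemma \ref{Prop1}, reducing the LHS to $\sup_w J(w)/\|\Delta w\|$ with $J(w):=\inf_{v\in V}\|(\nabla w-C\!:\!\varepsilon(v))_{pot}\|_{(L^2)^{d\times d}}$. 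The matrix-valued Weyl-Helmholtz decomposition then identifies the $(L^2)^{d\times d}$-orthogonal complement of $\{C\!:\!\varepsilon(v)+\xi : v\in V,\ \diiv\xi=0\}$ with the matrix gradients $\{\nabla z : z\in (H^1_0)^d,\ (\nabla z,C\!:\!\varepsilon(v))=0\ \forall v\in V\}$; by the symmetries of $C$ the pairing $(\nabla z,C\!:\!\varepsilon(v))$ coincides with $(\varepsilon(z),C\!:\!\varepsilon(v))$, yielding the definition above. Integration by parts and Cauchy-Schwarz give the upper bound $J(w)\leq \|\Delta w\|\sup_z\|z\|/\|\nabla z\|$, and choosing $-\Delta w=z$ closes the gap.

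Second, I would apply the scalar Proposition \ref{Csorsedfesol4} componentwise to the vector Poisson problem $-\Delta w=b$ with approximation space $X\subset (H^1_0(\Omega))^d$. Since $\nabla w\in L^2_{pot}$, the flux norm with identity coefficient reduces to $\|\nabla w-\nabla v\|_{(L^2)^{d\times d}}$, and substituting $b=-\Delta w$ converts the sup over $b$ into the sup over $w\in (H^1_0\cap H^2)^d$, giving
\begin{equation*}
\sup_{w\in (H^1_0(\Omega)\cap H^2(\Omega))^d}\inf_{v\in X}\frac{\|\nabla w-\nabla v\|_{(L^2(\Omega))^{d\times d}}}{\|\Delta w\|_{(L^2(\Omega))^d}}=\sup_{z\in (\Delta X)^\perp}\frac{\|z\|_{(L^2(\Omega))^d}}{\|\nabla z\|_{(L^2(\Omega))^{d\times d}}}.
\end{equation*}
The hypothesis $(\diiv C\!:\!\varepsilon(V))=\Delta X$ immediately gives $(\diiv C\!:\!\varepsilon(V))^\perp=(\Delta X)^\perp$, so the two right-hand sides match and \eref{sidasasedzsddaud} follows.

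The main technical point will be Step~1, specifically verifying that the orthogonal-complement identification from the scalar proof of Proposition \ref{Csorsedfesol4} transfers correctly to the matrix-valued setting despite the asymmetry of $\nabla z$ versus the symmetry of $C\!:\!\varepsilon(v)$; this is handled by the symmetries of $C$, while Korn's inequality (already invoked in Proposition \ref{Prop2a}) ensures that the resulting bilinear form is coercive on $(H^1_0(\Omega))^d$. The remaining manipulations---integration by parts and the choice $-\Delta w=z$---are componentwise identical to the scalar case and introduce no additional difficulty.
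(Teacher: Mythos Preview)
Your proposal is correct and follows the approach the paper implicitly intends: the corollary is meant to be read off from the vectorial analogue of Proposition~\ref{Csorsedfesol4} (which is what underlies Theorem~\ref{sdjhskzzhdsdkhje}), and you carry this out explicitly, correctly noting that one cannot literally set ``$C'=I$'' in Theorem~\ref{sdjhskzzhdsdkhje} but must instead invoke the common characterization $\sup_{z\in(\cdot)^\perp}\|z\|/\|\nabla z\|$ on both sides. One small wording issue: in Step~2 you say you apply Proposition~\ref{Csorsedfesol4} ``componentwise,'' but since $X\subset (H^1_0(\Omega))^d$ need not split as a product of scalar spaces, what you actually do (and what works) is rerun the same orthogonal-complement argument directly in the vector setting---your subsequent sentences already do this correctly.
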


The proof of theorem \ref{sdjhskzzhdsdkhje} is analogous to the proof of theorem \ref{sdjhskjdhskdhkhje}.

For $\V$  a finite dimensional linear subspace of
$(L^2(\Omega))^{d\times d}$ we define
\begin{equation}\label{ksjjsedddddddeeesel3}
(\diiv C:\V):=\operatorname{span} \{\diiv(C:\zeta)\,:\,  \zeta\in \V\}.
\end{equation}
Observe that $(\diiv C: \V)$ is a finite dimensional subspace of $(H^{-1}(\Omega))^d$. The proof of the following theorem is analogous to the proof of theorem \ref{sdjhskjdhskdhkhje}.

\begin{Theorem}\label{sdjhskjddshskdhsdsdsdsxkhje}
Let $\V'$ and $\V$ be finite-dimensional subspaces of $(L^2(\Omega))^{d\times d}$. For $b\in (L^2(\Omega))^{d}$ let $u$ be the solution of
\eref{elasticity} with conductivity $C$ and  $u'$ be the solution of
\eref{elasticity} with conductivity $C'$. If $(\diiv C: \V)=(\diiv C': \V')$ then
\begin{equation}\label{sidasasedsdssdeeewdaud}
\sup_{b \in (L^2(\Omega))^{d}} \inf_{\zeta\in \V}  \frac{ \|(C:(\varepsilon( u) - \zeta))_{pot}\|_{(L^2(\Omega))^{d\times d}}}{
\|b\|_{(L^2(\Omega))^d}}= \sup_{b \in (L^2(\Omega))^d} \inf_{\zeta\in \V'}  \frac{ \|(C':(\varepsilon( u) - \zeta))_{pot}\|_{(L^2(\Omega))^{d\times d}}}{
\|b\|_{(L^2(\Omega))^d}}
\end{equation}

\end{Theorem}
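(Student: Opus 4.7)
My strategy is to follow the scalar solution-space argument of Proposition \ref{Csorsedfesol4}, adapted to the flux quantity in the vectorial elasticity setting. The goal is an intermediate identity whose right-hand side depends only on the finite-dimensional subspace $(\diiv C : \V) \subset (H^{-1}(\Omega))^d$, namely
\[
\sup_{b \in (L^2(\Omega))^d} \inf_{\zeta \in \V} \frac{\|(C:(\varepsilon(u)-\zeta))_{pot}\|_{(L^2(\Omega))^{d\times d}}}{\|b\|_{(L^2(\Omega))^d}} = \sup_{z \in (\diiv C : \V)^{\perp}} \frac{\|z\|_{(L^2(\Omega))^d}}{\|\nabla z\|_{(L^2(\Omega))^{d\times d}}},
\]
where $(\diiv C : \V)^{\perp} := \{z \in (H^1_0(\Omega))^d : (\nabla z, C:\zeta) = 0\ \forall \zeta \in \V\}$. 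Applying this same identity to $(C',\V')$ and invoking the hypothesis $(\diiv C : \V) = (\diiv C' : \V')$ will then yield \eref{sidasasedsdssdeeewdaud}.

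To derive the intermediate identity, I first parametrize loads $b$ via the vector Poisson problem $-\Delta w = b$, which is a bijection between $(L^2(\Omega))^d$ and $(H^2(\Omega) \cap H^1_0(\Omega))^d$ with $\|b\|_{(L^2)^d} = \|\Delta w\|_{(L^2)^d}$. Because $-\diiv(C:\varepsilon(u)) = b = -\diiv(\nabla w)$, the field $C:\varepsilon(u) - \nabla w$ is divergence-free, hence lies in $L^2_{curl}$; since $\nabla w$ itself lies in $L^2_{pot}$, this forces $(C:\varepsilon(u))_{pot} = \nabla w$. Using pot/curl orthogonality in $(L^2(\Omega))^{d\times d}$, I rewrite
\[
J(w) := \inf_{\zeta \in \V}\|(C:(\varepsilon(u) - \zeta))_{pot}\| = \inf_{\zeta \in \V,\ \xi \in L^2_{curl}} \|\nabla w - C:\zeta - \xi\|.
\]
By orthogonal projection (duality), this distance equals the supremum of $(\nabla w, \eta)/\|\eta\|$ taken over $\eta \in L^2_{pot}$ orthogonal to each $C:\zeta$; writing $\eta = \nabla z$ with $z \in (H^1_0(\Omega))^d$ and using the distributional identity $(\nabla z, C:\zeta) = -\langle z, \diiv(C:\zeta)\rangle$ converts the constraint into $z \in (\diiv C : \V)^{\perp}$. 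An integration by parts $(\nabla w, \nabla z) = -(\Delta w, z) = (b, z)$ combined with Cauchy--Schwarz gives the ``$\leq$'' direction of the intermediate identity; equality is achieved by choosing $b = z$ for the optimizing $z$ (so that $-\Delta w = z$ and $\|\Delta w\| = \|z\|$).

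The hard part will be the bookkeeping around the matrix-valued Weyl--Helmholtz decomposition and the symmetries of the elasticity tensor: because $C:\zeta$ depends only on the symmetric part of $\zeta$ (as in the remark following \eref{jdhkwjhdje}), I need to check that the characterization of $(\diiv C : \V)^{\perp}$ is unaffected by this redundancy, and that the duality step in the matrix-valued setting and the distributional integration by parts for arbitrary $\zeta \in \V \subset (L^2(\Omega))^{d\times d}$ with $z \in (H^1_0(\Omega))^d$ are properly justified (this is standard, since $\diiv(C:\zeta) \in (H^{-1}(\Omega))^d$). Once these points are handled, the proof is a direct transcription of the scalar flux case (Theorem \ref{sdjhskjdhskdhsdsdsdsxkhje}), which itself mirrors Proposition \ref{Csorsedfesol4}.
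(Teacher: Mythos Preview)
Your proposal is correct and follows essentially the same route the paper indicates: the paper states that the proof is analogous to that of Theorem \ref{sdjhskjdhskdhkhje}, which in turn rests on Proposition \ref{Csorsedfesol4}, and you have faithfully transcribed that argument to the vectorial flux setting. The intermediate identity you target, the reduction $(C:\varepsilon(u))_{pot}=\nabla w$ via the vector Poisson problem, the distance--duality step yielding the supremum over $(\diiv C:\V)^\perp$, and the saturation by $-\Delta w=z$ are precisely the vectorial analogues of equations \eref{sidseewe3daeswsyedsasesdud}--\eref{sidseewe3dddjkjkassdsdwsyasesdud}; your remarks on the symmetries of $C$ are adequately covered by the observation following \eref{jdhkwjhdje}.
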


\begin{Corollary}
Let $\V$ be a finite-dimensional subspace of $(L^2(\Omega))^{d\times d}$ and
$X$ a finite-dimensional subspace of $(H^1_0(\Omega))^d$. For $b\in (L^2(\Omega))^d$ let $u$ be the solution of
\eref{elasticity} with elasticity $C$. If $(\diiv C: \V)=(\Delta X)$ then
\begin{equation}\label{sidasasedsfdssffxddaud}
\sup_{b \in L^2(\Omega)} \inf_{\zeta\in \V}  \frac{ \|(C:(\varepsilon (u) - \zeta))_{pot}\|_{(L^2(\Omega))^{d\times d}}}{
\|b\|_{(L^2(\Omega))^d}}= \sup_{w\in H^1_0(\Omega)\cap H^2(\Omega)} \inf_{v\in X}  \frac{ \|\nabla w - \nabla v\|_{(L^2(\Omega))^{d\times d}}}{
\|\Delta w\|_{(L^2(\Omega))^d}}
\end{equation}

\end{Corollary}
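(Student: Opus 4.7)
The plan is to mirror the derivation of Corollary \ref{sjhgdkjshgd} in the vectorial/flux setting. Given $b \in (L^2(\Omega))^d$, let $u$ be the solution of \eref{elasticity} and introduce the unique $w \in (H^1_0(\Omega) \cap H^2(\Omega))^d$ with $-\Delta w = b$ (componentwise Poisson on the smooth domain $\Omega$, via standard elliptic regularity). This produces a bijection $b \leftrightarrow w$ with $\|b\|_{(L^2(\Omega))^d} = \|\Delta w\|_{(L^2(\Omega))^d}$, so the supremum over $b$ becomes a supremum over $w \in (H^1_0(\Omega) \cap H^2(\Omega))^d$ once the integrand is rewritten in $w$-variables.

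First I would show that $(C:\varepsilon(u))_{pot} = \nabla w$. Both objects lie in the matrix-valued potential space $L^2_{pot}$, which by density of $(C_0^\infty(\Omega))^d$ in $(H^1_0(\Omega))^d$ coincides with $\{\nabla \phi : \phi \in (H^1_0(\Omega))^d\}$; for $\nabla w$ this is immediate. Their weak divergences agree: $\diiv((C:\varepsilon(u))_{pot}) = \diiv(C:\varepsilon(u)) = -b$ (the $L^2_{curl}$ component has zero distributional divergence by definition) and $\diiv(\nabla w) = \Delta w = -b$. Since $-\Delta$ is an isomorphism $(H^1_0(\Omega))^d \to (H^{-1}(\Omega))^d$ (acting componentwise), two gradients of elements of $(H^1_0)^d$ with equal divergence must come from the same primitive, giving the identification. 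Consequently $(C:(\varepsilon(u) - \zeta))_{pot} = \nabla w - (C:\zeta)_{pot}$ for every $\zeta \in \V$.

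The main technical step, which I expect to be the key obstacle, is to upgrade the hypothesis $(\diiv C : \V) = \Delta X$ to the pointwise identification $\{(C:\zeta)_{pot} : \zeta \in \V\} = \{\nabla v : v \in X\}$. For $\zeta \in \V$, write $(C:\zeta)_{pot} = \nabla \phi$ with $\phi \in (H^1_0(\Omega))^d$; then $\Delta \phi = \diiv((C:\zeta)_{pot}) = \diiv(C:\zeta) \in \Delta X$, and injectivity of $\Delta$ on $(H^1_0)^d$ forces $\phi \in X$. Conversely, any $v \in X$ satisfies $\Delta v \in (\diiv C : \V)$, so $\Delta v = \diiv(C:\zeta)$ for some $\zeta \in \V$ (using linearity of $\V$), and the same argument recovers $(C:\zeta)_{pot} = \nabla v$. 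This lifting is the flux-space analog of the div-to-pot correspondence that drives Corollary \ref{sjhgdkjshgd}. Combining it with the previous paragraph yields
\begin{equation*}
\inf_{\zeta \in \V} \|(C:(\varepsilon(u) - \zeta))_{pot}\|_{(L^2(\Omega))^{d\times d}} = \inf_{v \in X} \|\nabla w - \nabla v\|_{(L^2(\Omega))^{d\times d}},
\end{equation*}
and dividing by $\|b\|_{(L^2(\Omega))^d} = \|\Delta w\|_{(L^2(\Omega))^d}$ and taking suprema delivers the claimed equality.
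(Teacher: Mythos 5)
Your proof is correct. Where the paper's intended route runs through the duality characterization of Proposition~\ref{Csorsedfesol4} (identifying each side with $\sup_{z}\|z\|_{L^2}/\|\nabla z\|_{L^2}$ over the common orthogonal complement, then invoking the vectorial analogue of Lemma~\ref{Prop1}), you instead prove the stronger per-$b$ identity $\inf_{\zeta\in\V}\|(C:(\varepsilon(u)-\zeta))_{pot}\|=\inf_{v\in X}\|\nabla w-\nabla v\|$ directly, via two identifications: $(C:\varepsilon(u))_{pot}=\nabla w$ (both are potentials with the same distributional divergence $-b$), and $\{(C:\zeta)_{pot}:\zeta\in\V\}=\{\nabla v:v\in X\}$ (both sets of potentials are characterized by their divergences, which span the same finite-dimensional subspace of $(H^{-1}(\Omega))^d$ by hypothesis). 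This is the flux-level analogue of the observation the paper makes in Remark~\ref{transferproperty}/\eqref{slsdhsabbud} for the scalar solution case, promoted to a full proof; it is more elementary (no minimax step) and establishes more (the identity holds for each fixed $b$, not only after taking $\sup_b$). Two small gaps you glossed over: the claim $L^2_{pot}=\{\nabla\phi:\phi\in(H^1_0(\Omega))^d\}$ needs the Poincar\'e inequality for the nontrivial inclusion, not merely density of $C_0^\infty$ in $H^1_0$; and $w$ in the right-hand supremum should be read as ranging over $(H^1_0(\Omega)\cap H^2(\Omega))^d$ (a typo in the statement), which you handle correctly. Neither affects the validity of the argument.
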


\section{Application to theoretical finite element methods with accuracy independent of material contrast.}\label{jkgjkhgjgu}
In this section, we will show how, as a very simple and straightforward application, the flux norm can be used to construct finite dimensional approximation spaces for solutions of \eref{scalarproblem0} and \eref{elasticity} with errors independent of the regularity and contrast of the coefficients and the regularity of $\partial \Omega$ (for the basis defined in subsection \ref{khdkhdlsdde3wkwhe}).
A similar  approximation problem can be found in the work of Melenk \cite{MR1766938}, where subsets of $L^2$ such as  piecewise discontinuous polynomials have been used  as an approximation basis (for the right hand side of \eref{scalarproblem0}). The main difference between \cite{MR1766938} and this section lies in the introduction  of  the flux-norm ($\|.\|_{a\f}$), which plays  a key role in our analysis, since the approximation error (in $\|.\|_{a\f}$-norm) of the space $V_h$  on solutions of the operator $\diiv(a\nabla)$  is equal to the approximation error (in $\|.\|_{a'\f}$-norm) of the space $V_h'$  on solutions of the operator $\diiv(a'\nabla)$ provided that $\diiv(a\nabla V_h)=\diiv(a'\nabla V_h')$. Moreover, this  allows us to obtain an {\it explicit and optimal} constant in the  rate of convergence (theorem \ref{Corol1} and \ref{Corol1del}). To our knowledge, no explicit optimal error constant has been obtained for finite-dimensional approximations of the solution space of \eref{scalarproblem0}.  This question of optimal approximation with respect to a linear finite dimensional space is related to  the Kolmogorov n-width \cite{Pinkus85}, which measures how accurately a given set of functions can be approximated by linear spaces of dimension $n$ in a given norm. A surprising result of the theory of n-widths is the non-uniqueness of the space realizing the optimal approximation \cite{Pinkus85}.
A related work is also \cite{MR2430615}, in which errors in approximations to  solutions of $\diiv(a\nabla u)=0$ from  linear spaces generated by a finite set of boundary conditions are analyzed as functions of the distance to the boundary (the penetration function).

\subsection{Scalar divergence form   equation}\label{kjgfgfgh}

\subsubsection{Approximation with piecewise linear nodal basis functions of a regular tessellation of $\Omega$}\label{khdkhdlkwhe}

Let $\Omega_h$ be a regular tessellation of $\Omega$ of resolution $h$ (we refer to \cite{BrSc02}). Let $\mathcal{L}_0^h$ be the set of piecewise linear functions on $\Omega_h$ with Dirichlet boundary conditions.  Denote by $\varphi_k$ the piecewise linear nodal basis elements of $\mathcal{L}_0^h$, which are {\it localized} (the support of $\varphi_k$ is the union of simplices contiguous to the node $k$). Here, we will express the error estimate  in terms of $h$  to emphasize the analogy with classical FEM (it could be expressed in terms of $N(h)$, see below if needed).

Let $\Phi_k$ be the functions
associated with the  piecewise linear nodal basis elements  $\varphi_k$
through the equation
\begin{equation}\label{harmonsedcalwsarm}
    \left\{\begin{array}{ll}
   -\diiv\left( a(x) \nabla \Phi_k (x)\right) = \Delta \varphi_k\; & \text{in } \Omega\\
    \Phi_k=0  & \mbox{on}\;  \partial \Omega\end{array}.\right.
\end{equation}

Define
\begin{equation}\label{harmseeddcalwddesar}
V_h:=\operatorname{span}\{\Phi_k\},
\end{equation}

\begin{Theorem}\label{jhdjkhsgdkjgeyhde}
 For any $f\in L^2(\Omega)$, let $u$ be the
solution of \eref{scalarproblem0}. Then,
\begin{equation}\label{sids5sassddedawqeyassdud}
\sup_{f \in L^2(\Omega)} \inf_{v\in
V_h} \frac{ \|u-v\|_{a\f}}{  \|f\|_{L^2(\Omega)}}\leq C h
\end{equation}
where $C$ depends only on $\Omega$ and the aspect ratios of the simplices of $\Omega_h$.
\end{Theorem}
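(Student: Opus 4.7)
The idea is to reduce the estimate for the rough-coefficient operator $\diiv(a\nabla\cdot)$ to the corresponding well-known estimate for the Laplacian, using the transfer property established earlier in the paper. The construction of $V_h$ in \eqref{harmonsedcalwsarm} is specifically tailored so that this reduction is immediate.

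First, I would verify the algebraic identification that makes Corollary \ref{sjhgdkjshgd} applicable. By definition, $\diiv(a\nabla\Phi_k) = \Delta\varphi_k$ for every nodal basis function $\varphi_k$. Taking spans yields
\begin{equation*}
(\diiv\, a\nabla V_h) = \operatorname{span}\{\diiv(a\nabla\Phi_k)\} = \operatorname{span}\{\Delta\varphi_k\} = (\diiv\nabla\, \mathcal{L}_0^h).
\end{equation*}
Thus the hypothesis of Corollary \ref{sjhgdkjshgd} is satisfied with $X=\mathcal{L}_0^h$.

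Second, I would apply the corollary directly to conclude
\begin{equation*}
\sup_{f \in L^2(\Omega)} \inf_{v\in V_h}  \frac{ \|u - v\|_{a\f}}{\|f\|_{L^2(\Omega)}}
=
\sup_{w\in H^1_0(\Omega)\cap H^2(\Omega)} \inf_{v\in \mathcal{L}_0^h}  \frac{ \|\nabla w - \nabla v\|_{(L^2(\Omega))^d}}{\|\Delta w\|_{L^2(\Omega)}}.
\end{equation*}
The right-hand side is exactly the classical $H^1$-seminorm approximation error of piecewise linear FEM applied to the Poisson problem $-\Delta w = f$ on $\Omega$. Invoking the standard FEM interpolation estimate (Bramble--Hilbert / Scott--Zhang interpolation into $\mathcal{L}_0^h$), any $w \in H^1_0(\Omega)\cap H^2(\Omega)$ admits $v\in\mathcal{L}_0^h$ with $\|\nabla w - \nabla v\|_{L^2} \leq C h\, \|w\|_{H^2}$, and the $H^2$-regularity of the Poisson problem on the smooth domain $\Omega$ gives $\|w\|_{H^2} \leq C_\Omega \|\Delta w\|_{L^2}$. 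Combining these two ingredients yields the claimed bound, with $C$ depending only on $\Omega$ and on the aspect ratios of the simplices of $\Omega_h$ (which control the interpolation constant).

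The main obstacle is essentially conceptual rather than technical: one must recognize that the transfer property collapses the entire problem onto a classical Laplace estimate, so that no direct analysis of the rough operator $\diiv(a\nabla\cdot)$ or its solutions is ever needed, and in particular the constant $C$ inherits no dependence on $\lambda_{\min}(a)$, $\lambda_{\max}(a)$, or on any regularity of $a$. Once this is seen, each remaining step (the identity of finite-dimensional subspaces of $H^{-1}$, the invocation of Corollary \ref{sjhgdkjshgd}, and the standard FEM bound together with elliptic regularity on a $C^2$ domain) is routine.
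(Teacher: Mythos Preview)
Your proof is correct and follows essentially the same approach as the paper: apply Corollary~\ref{sjhgdkjshgd} (equation~\eref{sidasasedewdsssddaud}) with $X=\mathcal{L}_0^h$, then invoke the standard $\mathcal{O}(h)$ approximation of $H^2$ functions by piecewise linears together with $H^2$-regularity of the Laplace--Dirichlet problem on a $C^2$ domain. The paper's proof is a one-sentence version of exactly what you wrote.
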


\begin{proof}
Theorem \ref{jhdjkhsgdkjgeyhde} is a straightforward application of the equation \eref{sidasasedewdsssddaud} and the fact that one can approximate
$H^2$ functions by functions from $\mathcal{L}_0^h$ in the $H^1$ norm with $\mathcal{O}(h)$ accuracy (since $\partial \Omega$ is of class $C^2$ solutions of the Laplace-Dirichlet operator with $L^2$ right hand sides are in $H^2$, we refer to \cite{BrSc02}).
\end{proof}

\begin{Corollary}\label{jhdjkhsgsddssdgeyhde}
 For $f\in L^2(\Omega)$, let $u$ be the
solution of \eref{scalarproblem0} in $H^1_0(\Omega)$ and $u_h$ the finite element solution of \eref{scalarproblem0} in $V_h$. Then,
\begin{equation}\label{sids5scdqeyassdud}
\sup_{f \in L^2(\Omega)} \frac{ \|u-u_h\|_{H^1_0(\Omega)}}{  \|f\|_{L^2(\Omega)}}\leq \frac{C}{\lambda_{\min}(a)} h
\end{equation}
where $C$ depends only on $\Omega$ and the aspect ratios of the simplices of $\Omega_h$.
\end{Corollary}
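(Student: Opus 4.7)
The plan is to combine Cea's lemma in the energy norm with the flux-norm approximation estimate \eref{sids5sassddedawqeyassdud} from Theorem \ref{jhdjkhsgdkjgeyhde}, using Proposition \ref{Prop0} as the bridge. The naive Cea estimate in the energy norm $\|\psi\|_a^2:=\int_\Omega (\nabla \psi)^T a\nabla\psi$ would yield a bound proportional to $\sqrt{\lambda_{\max}(a)/\lambda_{\min}(a)}$; the point of working with $\|\cdot\|_{a\f}$ is precisely to eliminate the factor $\lambda_{\max}(a)$ and retain only $1/\lambda_{\min}(a)$.

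First, since $a$ is symmetric and uniformly elliptic, Galerkin orthogonality gives the quasi-optimality
\begin{equation*}
\|u-u_h\|_a \leq \|u-v\|_a \qquad \text{for all } v\in V_h,
\end{equation*}
and the left-hand side satisfies $\|u-u_h\|_a^2 \geq \lambda_{\min}(a)\,\|\nabla(u-u_h)\|_{(L^2(\Omega))^d}^2$.

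Next, I would bound $\|u-v\|_a$ in terms of $\|u-v\|_{a\f}$ by exploiting the $L^2$-orthogonality of $\nabla \psi\in L^2_{pot}$ and $(a\nabla\psi)_{curl}\in L^2_{curl}$ from Definition \ref{defWeyl}. This yields
\begin{equation*}
\|\psi\|_a^2 \;=\; \int_\Omega \nabla\psi\cdot (a\nabla\psi)_{pot} \;\leq\; \|\nabla\psi\|_{(L^2(\Omega))^d}\,\|\psi\|_{a\f},
\end{equation*}
and combined with the left inequality of \eref{skkshg} from Proposition \ref{Prop0}, namely $\lambda_{\min}(a)\|\nabla\psi\|_{(L^2(\Omega))^d}\leq \|\psi\|_{a\f}$, I get $\|\psi\|_a^2 \leq \|\psi\|_{a\f}^2/\lambda_{\min}(a)$.

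Chaining these with $\psi=u-v$ and $\psi=u-u_h$ gives $\lambda_{\min}(a)^2\,\|\nabla(u-u_h)\|_{(L^2(\Omega))^d}^2 \leq \|u-v\|_{a\f}^2$ for every $v\in V_h$. Taking the infimum over $v\in V_h$, dividing by $\|f\|_{L^2(\Omega)}$, taking the supremum over $f$, and applying Theorem \ref{jhdjkhsgdkjgeyhde} yields \eref{sids5scdqeyassdud}. There is no real obstacle; the only subtlety is recognizing that going through the flux norm (rather than the energy norm directly) is what allows the error constant to depend only on $\lambda_{\min}(a)$ and not on the contrast $\lambda_{\max}(a)/\lambda_{\min}(a)$.
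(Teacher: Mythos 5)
Your proof is correct and follows the route the paper compresses into one sentence (``a straightforward application of Theorem~\ref{jhdjkhsgdkjgeyhde} and inequality~\eref{skkshg}''), and you have identified the one step that is genuinely not straightforward and is silently needed to get the stated constant. Specifically, C\'ea's lemma combined \emph{naively} with the two-sided norm equivalence \eref{skkshg} yields an extra factor $\sqrt{\lambda_{\max}(a)/\lambda_{\min}(a)}$; your sharpened bound $\|\psi\|_a^2=\int_\Omega\nabla\psi\cdot(a\nabla\psi)_{pot}\le\|\nabla\psi\|\,\|\psi\|_{a\f}\le\|\psi\|_{a\f}^2/\lambda_{\min}(a)$, which uses the $L^2$-orthogonality of the Weyl--Helmholtz components together with the left inequality of \eref{skkshg}, is exactly what removes that contrast factor and lands on the claimed $C/\lambda_{\min}(a)$. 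This is precisely what the authors must have had in mind, since otherwise their claim that $C$ depends only on $\Omega$ and the aspect ratios would not follow from the ingredients they cite; your write-up supplies the missing but essential detail.
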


\begin{proof}
Corollary \ref{jhdjkhsgsddssdgeyhde} is a straightforward application of theorem \ref{jhdjkhsgdkjgeyhde} and inequality \eref{skkshg}.
\end{proof}

Let $Q$ be a  symmetric, uniformly elliptic, {\it divergence-free} (as defined in section \ref{inequalities})  matrix with entries in $L^\infty(\Omega)$.  We note that this matrix will be chosen below so that the solutions of $\diiv Q \nabla u=f$ are in $H^2(\Omega)$ if $f \in L^2(\Omega)$ and therefore can be approximated by functions from $\mathcal{L}_0^h$ in $H^1$ norm with $\mathcal{O}(h)$ accuracy. It follows from \cite{MR1648351} that this is not possible for  the solutions of \eqref{scalarproblem0}.  In particular, in some cases $Q$ can be chosen to be the identity.

Let $\Phi_k^Q$ be the functions
associated with the  piecewise linear nodal basis elements  $\varphi_k$
through the equation
\begin{equation}\label{harmonsefdfddcalwsarm}
    \left\{\begin{array}{ll}
   -\diiv\left( a(x) \nabla \Phi_k^Q(x)\right) = \diiv(Q\nabla \varphi_k)\; & \text{in } \Omega\\
    \Phi_k^Q=0  & \mbox{on}\;  \partial \Omega\end{array}.\right.
\end{equation}

Define
\begin{equation}\label{harmseeddcalwddesabr}
V_h^Q:=\operatorname{span}\{\Phi_k^Q\},
\end{equation}

\begin{Theorem}
 For $f\in L^2(\Omega)$, let $u$ be the
solution of \eref{scalarproblem0} in $H^1_0(\Omega)$ and $u_h$ the finite element solution of \eref{scalarproblem0} in $V_h^Q$. If $Q$ satisfies one of the inequalities of theorem \ref{kshkheie} or theorem \ref{Prop6s} then
\begin{equation}\label{sifrds5scdqeyassdsdeefrfud}
\sup_{f \in L^2(\Omega)} \frac{ \|u-u_h\|_{H^1_0(\Omega)}}{  \|f\|_{L^2(\Omega)}}\leq \frac{C}{\lambda_{\min}(a)} h
\end{equation}
where $C$ depends only on $\Omega$ and the aspect ratios of the simplices of $\Omega_h$.
\end{Theorem}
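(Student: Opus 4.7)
The plan is to imitate the proof of Corollary \ref{jhdjkhsgsddssdgeyhde}, with $Q$ playing the role previously taken by the identity matrix. The definition \eref{harmonsefdfddcalwsarm} of the basis $\{\Phi_k^Q\}$ immediately gives the span identity $(\diiv a \nabla V_h^Q)=(\diiv Q \nabla \mathcal{L}_0^h)$. Applying the transfer property (Theorem \ref{sdjhskjdhskdhkhje}, more precisely the pointwise-in-$f$ form \eref{slsdhsasedsssBddaud} stated in the Remark following it) to the pairs $(a,V_h^Q)$ and $(Q,\mathcal{L}_0^h)$ reduces the question to estimating how well $u'\in H^1_0(\Omega)$ solving $-\diiv(Q\nabla u')=f$ is approximated in the $Q\f$-norm by elements of $\mathcal{L}_0^h$: explicitly, $\inf_{v\in V_h^Q}\|u-v\|_{a\f}=\inf_{v'\in \mathcal{L}_0^h}\|u'-v'\|_{Q\f}$.

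Since $Q$ is symmetric, uniformly elliptic, and divergence-free, the operator $-\diiv(Q\nabla \cdot)$ agrees with the non-divergence form operator $-Q_{ij}\partial_i\partial_j$. Under the Cordes-type hypothesis supplied by Theorem \ref{kshkheie} or Theorem \ref{Prop6s}, together with the $C^2$-regularity of $\partial\Omega$, this yields the $H^2$ regularity estimate $\|u'\|_{H^2(\Omega)}\leq C\|f\|_{L^2(\Omega)}$, with $C$ depending only on $Q$ and $\Omega$. Choosing $v'_h\in\mathcal{L}_0^h$ to be the nodal interpolant of $u'$ then gives the standard piecewise-linear bound $\|\nabla(u'-v'_h)\|_{(L^2(\Omega))^d}\leq C h\|u'\|_{H^2}$, and the right-hand side of \eref{skkshg} applied to $Q$ gives $\|u'-v'_h\|_{Q\f}\leq \lambda_{\max}(Q)\|\nabla(u'-v'_h)\|_{(L^2(\Omega))^d}$. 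Combining these with the transfer identity of the first step yields
\[
\inf_{v\in V_h^Q}\|u-v\|_{a\f}\leq C h\|f\|_{L^2(\Omega)},
\]
with $C$ depending only on $\Omega$, $Q$, and the aspect ratios of the simplices of $\Omega_h$.

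Finally, I pass from this $a\f$-best-approximation bound to the desired $H^1_0$ error bound on the Galerkin solution $u_h$ exactly as in Corollary \ref{jhdjkhsgsddssdgeyhde}. By Galerkin orthogonality, for every $v\in V_h^Q$ one has $\int_\Omega \nabla(u-u_h)^T a \nabla(u-u_h)\leq \int_\Omega \nabla(u-v)^T a \nabla(u-v)$; using that $\nabla(u-v)\in L^2_{pot}$ is $L^2$-orthogonal to the divergence-free part of $a\nabla(u-v)$ and then Cauchy--Schwarz, the right-hand side is bounded by $\|\nabla(u-v)\|_{(L^2(\Omega))^d}\|u-v\|_{a\f}$. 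Coupling this with the left-hand inequality in \eref{skkshg}, i.e.\ $\lambda_{\min}(a)\|\nabla(u-v)\|_{(L^2(\Omega))^d}\leq \|u-v\|_{a\f}$, gives $\|\nabla(u-u_h)\|_{(L^2(\Omega))^d}\leq \lambda_{\min}(a)^{-1}\|u-v\|_{a\f}$. Infimizing over $v\in V_h^Q$ and invoking the bound of the previous step delivers \eref{sifrds5scdqeyassdsdeefrfud}.

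The only nontrivial ingredient in this plan is the $H^2$ regularity estimate for the non-divergence operator $-Q_{ij}\partial_i\partial_j$; this is precisely what Theorems \ref{kshkheie} and \ref{Prop6s} are designed to provide, so once those are available the remainder is a routine adaptation of the constant-coefficient argument.
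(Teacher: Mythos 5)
Your proposal is correct and follows the paper's intended route, which the paper states only in two sentences: pass from $(a,V_h^Q)$ to $(Q,\mathcal{L}_0^h)$ via the transfer property, invoke $H^2$-regularity for $-\diiv(Q\nabla\cdot)$ supplied by the inequalities of section \ref{inequalities}, approximate by piecewise linears, and finish as in Corollary \ref{jhdjkhsgsddssdgeyhde}. Your fleshing-out of the last step (Galerkin orthogonality, $\int\nabla w^T a\nabla w=\int\nabla w^T(a\nabla w)_{pot}\le\|\nabla w\|\,\|w\|_{a\f}$, then the left inequality of \eref{skkshg}) is a clean way to land on the stated constant $\lambda_{\min}(a)^{-1}$ without dragging in $\lambda_{\max}(a)$.

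One small caveat worth noting, which is really an imprecision inherited from the paper rather than a defect in your argument: the equivalent statements of Theorem \ref{kshkheie} assert boundedness of $(\diiv Q\nabla)^{-1}$ from $H^{-2}$ to $L^2$, whereas what your interpolation step actually uses is the stronger $H^2$-regularity $\|u'\|_{H^2}\lesssim\|f\|_{L^2}$. The latter is exactly what the hypotheses of Theorem \ref{Prop6s} (and the Cordes conditions of Theorem \ref{swsmdblbhwsjhw3}) deliver via inequality \eref{ksssseseaedsesedfdL}; the Theorem \ref{kshkheie} inequalities alone do not obviously upgrade to $H^2$. The paper makes the same blanket claim in its own proof sketch, so you are faithfully reproducing its argument, but if you wanted to be airtight you would restrict to the Theorem \ref{Prop6s} hypotheses (or the Cordes condition) for the $H^2$-regularity step. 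Likewise, the constant $C$ necessarily carries a dependence on $Q$ (through $\lambda_{\max}(Q)$ and the regularity constant), again matching the paper's own, slightly loosely stated, conclusion.
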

\begin{proof}
The proof follows from the fact that if $Q$ satisfies one of the inequalities of theorem \ref{kshkheie} or theorem \ref{Prop6s} then solutions of
$-\diiv(Q\nabla u)=f$ with Dirichlet boundary conditions are in $H^2$. The rest of the proof is similar to that of the previous corollary.
\end{proof}

\subsubsection{Approximation with eigenfunctions of the Laplace-Dirichlet operator.}\label{khdkhdlsdde3wkwhe}
In this sub-section, we assume the minimal regularity condition ($C^2$) on the boundary $\partial \Omega$ such that the Weyl  formula holds (we refer to \cite{NetSaf05} and references therein).

Denote by $\Psi_k$ the eigenfunctions associated with the
Laplace-Dirichlet operator in $\Omega$ and $\lambda_k$ the
associated eigenvalues--i.e., for $k\in \N^*=\{1,2, \cdots\}$
\begin{equation}\label{kjskske23}
\begin{cases}
    -\Delta \Psi_k=\lambda_k \Psi_k \quad  x \in \Omega \\
    \Psi_k=0 \quad \text{on}\quad \partial \Omega.
    \end{cases}
\end{equation}
We assume that the eigenvalues are ordered--i.e.,
$\lambda_k \leq
\lambda_{k+1}$.

Let $\theta_k$ be the functions
associated with the Laplace-Dirichlet eigenfunctions $\Psi_k$
\eref{kjskske23} through the equation
\begin{equation}\label{harmonsedcalwsar}
    \left\{\begin{array}{ll}
   -\diiv\left( a(x) \nabla \theta_k (x)\right) = \lambda_k \Psi_k\; & \text{in } \Omega\\
    \theta_k=0  & \mbox{on}\;  \partial \Omega\end{array}.\right.
\end{equation}

Here, $\lambda_k$ is introduced on the right hand side of
\eref{harmonsedcalwsar} in order to normalize $\theta_k$ ($\theta_k=\Psi_k$, if $a(x)=I$) and can be otherwise
ignored since only  the span of $\{\theta_k\}$ matters.
 Define
\begin{equation}\label{harmseeddcalwsar}
\Theta_h:=\operatorname{span}\{\theta_1,\ldots,\theta_{N(h)}\},
\end{equation}
where $N(h)$ is the integer part of $|\Omega|/h^d$. The
motivation behind our definition of $\Theta_h$ is that its dimension
corresponds to the number of degrees of freedom of piecewise linear
functions on a regular triangulation (tessellation) of $\Omega$ of
resolution $h$.

\begin{Theorem}\label{Corol1}
 For $f\in L^2(\Omega)$, let $u$ be the
solution of \eref{scalarproblem0}. Then,
\begin{itemize}
\item
\begin{equation}\label{sids5sasawqeyassdud}
\lim_{h\rightarrow 0}\sup_{f \in L^2(\Omega)} \inf_{v\in
\Theta_h} \frac{ \|u-v\|_{a\f}}{ h \|f\|_{L^2(\Omega)}}=\frac{1}{2\sqrt{\pi}}
\Big(\frac{1}{\Gamma(1+\frac{d}{2})}\Big)^\frac{1}{d}.
\end{equation}
Furthermore, the space $\Theta_h$ leads (asymptotically as $h \to 0$) to the smallest possible constant in the right hand side of \eref{sids5sasawqeyassdud} among all subspaces of $H^1_0(\Omega)$ with $N(h)$, the integer part of $|\Omega|/h^d$, elements.
\item
\begin{equation}\label{sidasaeddeedsssddauddd999}
\inf_{V,\text{dim}(V)=N}\sup_{f \in L^2(\Omega)} \inf_{v\in V}  \frac{ \|u - v\|_{a\f}}{
\|f\|_{L^2(\Omega)}}= \frac{1}{2 \sqrt{\pi}}
\Big(\frac{|\Omega|}{\Gamma(1+\frac{d}{2})N}\Big)^\frac{1}{d} \big(1+\epsilon(N)\big)
\end{equation}
where the infimum is taken with respect to all subspaces of $H^1_0(\Omega)$ with $N$ elements and $\epsilon(N)$ is converges to zero as $N\rightarrow \infty$.
\end{itemize}
\end{Theorem}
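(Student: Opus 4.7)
The plan is to apply Corollary~\ref{sjhgdkjshgd} to transfer the approximation problem for $\diiv(a\nabla)$ over $\Theta_h$ into the analogous approximation problem for $-\Delta$ over spans of Laplace--Dirichlet eigenfunctions, evaluate the resulting quantity via spectral calculus, and then invoke Weyl's asymptotic formula together with the classical theory of Kolmogorov $n$-widths.

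Set $X_h:=\operatorname{span}\{\Psi_1,\ldots,\Psi_{N(h)}\}$. Combining \eref{harmonsedcalwsar} with \eref{kjskske23} gives $-\diiv(a\nabla\theta_k)=\lambda_k\Psi_k=-\Delta\Psi_k$, so $(\diiv a\nabla\Theta_h)=\Delta X_h$ as subspaces of $H^{-1}(\Omega)$. Corollary~\ref{sjhgdkjshgd} then rewrites the left-hand side of \eref{sids5sasawqeyassdud} as
\begin{equation*}
\sup_{w\in H^1_0(\Omega)\cap H^2(\Omega)}\inf_{v\in X_h}\frac{\|\nabla w-\nabla v\|_{(L^2(\Omega))^d}}{\|\Delta w\|_{L^2(\Omega)}}.
\end{equation*}
Expanding $w=\sum_k w_k\Psi_k$ in the $L^2$-orthonormal Laplace--Dirichlet eigenbasis and using the $L^2$-orthogonality of $\{\nabla\Psi_k\}$ (with $\|\nabla\Psi_k\|_{L^2}^2=\lambda_k$), I identify the best $X_h$-approximation as the truncation $\sum_{k\le N(h)}w_k\Psi_k$, so the displayed quantity equals
\begin{equation*}
\sup_{(w_k)\ne 0}\sqrt{\frac{\sum_{k>N(h)}\lambda_k w_k^2}{\sum_{k}\lambda_k^2 w_k^2}}=\frac{1}{\sqrt{\lambda_{N(h)+1}}},
\end{equation*}
the supremum being attained by concentrating all mass on $w_{N(h)+1}$. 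Weyl's asymptotic formula $\lambda_N\sim 4\pi^2(\omega_d|\Omega|)^{-2/d}N^{2/d}$ with $\omega_d=\pi^{d/2}/\Gamma(1+d/2)$, together with the choice $N(h)=\lfloor|\Omega|/h^d\rfloor$, then yields the limit in \eref{sids5sasawqeyassdud}.

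For the optimality in the first bullet and the full statement of the second, the idea is to exploit the dimension-preserving linear bijection $V\leftrightarrow X$ on $N$-dimensional subspaces of $H^1_0(\Omega)$ defined by $\Delta X=\diiv(a\nabla V)$; this is well-posed because both $-\diiv(a\nabla\cdot)$ and $-\Delta$ are isomorphisms $H^1_0(\Omega)\to H^{-1}(\Omega)$. By Corollary~\ref{sjhgdkjshgd} this bijection preserves the sup-inf quantity, so the infimum in \eref{sidasaeddeedsssddauddd999} equals the Kolmogorov $N$-width of the image of the unit $L^2$-ball under $(-\Delta)^{-1}\colon L^2(\Omega)\to H^1_0(\Omega)$ measured in the $H^1_0$-norm. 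Since $(-\Delta)^{-1}$ is a compact operator between Hilbert spaces with singular values $\{\lambda_k^{-1/2}\}$ and singular vectors $\{\Psi_k\}$, the classical $n$-width theorem \cite{Pinkus85} gives the exact value $\lambda_{N+1}^{-1/2}$ with optimal subspace $\operatorname{span}\{\Psi_1,\ldots,\Psi_N\}$; one more application of Weyl's law converts this into the explicit asymptotic expression with error $\epsilon(N)\to 0$, simultaneously proving the lower bound in \eref{sidasaeddeedsssddauddd999} and the asymptotic optimality of $\Theta_h$.

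The main technical point is the rigorous identification of the sup-inf quantity as the Kolmogorov $N$-width of a concrete compact operator between Hilbert spaces; once this is in place, the exact value of the width and the optimality of the eigenfunction subspace are classical, and the remaining work is straightforward spectral arithmetic combined with Weyl's formula.
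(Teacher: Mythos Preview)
Your proposal is correct and follows essentially the same route as the paper: apply the transfer property (Corollary~\ref{sjhgdkjshgd}) to reduce to the Laplacian with $X_h=\operatorname{span}\{\Psi_1,\dots,\Psi_{N(h)}\}$, compute the sup-inf by eigenfunction expansion to obtain $\lambda_{N(h)+1}^{-1/2}$, and finish with Weyl's formula. The only cosmetic difference is in the optimality argument: the paper gives a bare-hands eigenfunction computation and an alternative via the Courant--Fischer min-max principle applied to the identity in Proposition~\ref{Csorsedfesol4}, whereas you package the same content as the Kolmogorov $n$-width of the compact operator $(-\Delta)^{-1}:L^2\to H^1_0$ with singular values $\lambda_k^{-1/2}$; these are equivalent formulations of the same fact.
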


\begin{Remark}  The constants in the right hand side of \eqref{sids5sasawqeyassdud} and \eref{sidasaeddeedsssddauddd999} are the  classical Kolmogorov   $n$-width $d_n(A, X)$, understood in the ``asymptotic'' sense (as $h\to 0$ for \eref{sids5sasawqeyassdud} and $N\rightarrow \infty$ for \eref{sidasaeddeedsssddauddd999}) because the Weyl formula is asymtotic. Recall that the $n$-width measures how accurately  a given set of functions $A \subset X$ can be approximated by linear spaces $E_n$ of dimension $n$.  Writing $d_n(A,X)$ the $n$-width measure, it is defined by
$$d_n(A,X):=\inf_{E_n}\sup_{w\in A}\inf_{g\in E_n}\|w-g\|_X$$ for a  normed linear space $X$. In our case $X=H^1_0(\Omega)$, $A$ being the set of all solutions of \eqref{scalarproblem0} as $f$ spans $L^2(\Omega)$ for a given $a(x)$  and $\Omega$. It should be observed there is a  slight difference with classical Kolmogorov  $n$-width, indeed the flux norm
 $\|.\|_{a\f}$ used in \eref{sids5sasawqeyassdud} depends on $a$ (as opposed to the $H^1_0(\Omega)$-norm ).
A surprising result of the theory of n-widths \cite{Pinkus85} is that  the space realizing the optimal approximation  is not unique, therefore there may be subspaces, other than $\Theta_h$, providing the same asymptotic constant.
\end{Remark}

\begin{Remark} Whereas the constant in \eqref{sids5sasawqeyassdud} depends only on the dimension $d$,
the estimate for finite $h$ given by  \eref{sids5sassddedawqeyassdud}  depends explicitly on the aspect ratios of the simplices of $\Omega_h$ (the uniform bound on the ratio between the outer and inner radii of those simplices).
\end{Remark}

\begin{proof}
Let $V_h$ be a subspace of $H^1_0(\Omega)$ with $[|\Omega|/h^d]$ elements. Let $(v_k)$ be a basis of $V_h$.

Let $v_k'$ be the functions
associated with the   basis elements  $v_k$
through the equation
\begin{equation}\label{harmonsefdfddcalwsarmZ}
    \left\{\begin{array}{ll}
   \Delta v_k'=-\diiv\left( a(x) \nabla v_k (x)\right) \; & \text{in } \Omega\\
    v_k'=0  & \mbox{on}\;  \partial \Omega\end{array}.\right.
\end{equation}
It follows from equation \eref{sidasasedewdsssddaud} of theorem \ref{sdjhskjdhskdhkhje} that the following {\it transfer equation} holds

\begin{equation}\label{sidssxsasawqeyassdud}
\sup_{f \in L^2(\Omega)} \inf_{v\in
V_h} \frac{ \|u-v\|_{a\f}}{  \|f\|_{L^2(\Omega)}}=\sup_{w \in H^2\cap H^1_0(\Omega)} \inf_{v'\in
V_h'} \frac{ \|\nabla w-\nabla v'\|_{(L^2(\Omega))^d}}{  \|\Delta w\|_{L^2(\Omega)}}
\end{equation}
where  for $f\in L^2(\Omega)$,  $u$ is the solution of \eref{scalarproblem0}.
Using the eigenfunctions $\Psi_k$ of the Laplace-Dirichlet operator,  we arrive at
\begin{equation}\label{sidssqaxsasawdqeyassdud}
\frac{ \|\nabla w-\nabla v'\|_{(L^2(\Omega))^d}^2}{  \|\Delta w\|_{L^2(\Omega)}2}= \frac{\sum_{k=1}^\infty \frac{1}{\lambda_k}(\Delta w-\Delta v',\Psi_k)^2 }{\sum_{k=1}^\infty (\Delta w,\Psi_k)^2 }.
\end{equation}
When the supremum is taken with respect to $w \in H^2\cap H^1_0(\Omega)$, the right hand side of \eref{sidssqaxsasawdqeyassdud} can be minimized by taking
$V_h'$ to be the linear span of the first $[|\Omega|/h^d]$ eigenfunctions of the Laplace-Dirichlet operator on $\Omega$, because with such a basis the first $N(h)$ coefficients of $\Delta w$ are canceled, i.e.
 \begin{equation}\label{sidssqaxsasawdqeyassdudll}
\inf_{v'\in V_h'}\frac{ \|\nabla w-\nabla v'\|_{(L^2(\Omega))^d}^2}{  \|\Delta w\|_{L^2(\Omega)}2}= \frac{\sum_{k=N(h)+1}^\infty \frac{1}{\lambda_k}(\Delta w,\Psi_k)^2 }{\sum_{k=1}^\infty (\Delta w,\Psi_k)^2 }
\end{equation}
 with $N(h)=[|\Omega|/h^d]$. Then
\begin{equation}\label{siaqeyassdud}
\inf_{V_h',\,dim(V_h')=N(h)}\sup_{w \in H^2\cap H^1_0(\Omega)} \inf_{v'\in
V_h'} \frac{ \|\nabla w-\nabla v'\|_{(L^2(\Omega))^d}^2}{  \|\Delta w\|_{L^2(\Omega)}^2}=\frac{1}{\lambda_{N(h)+1}}.
\end{equation}
This follows by noting that the right hand side of equation \eqref{sidssqaxsasawdqeyassdudll} is less
than or equal to $\frac{1}{\lambda_{N(h)+1}}$ and that equality is obtained for $w = \Psi_{N(h)+1}$.

The optimality of the constant in $V_h'$  translates into the optimality of the constant associated with $V_h$ using the transfer equation \eref{sidssxsasawqeyassdud}, i.e.
\begin{equation}\label{sidssxsasahjhjuyuuwqeyassdud}
\inf_{V_h,\,dim(V_h)=N(h)}\sup_{f \in L^2(\Omega)} \inf_{v\in
V_h} \frac{ \|u-v\|_{a\f}}{  \|f\|_{L^2(\Omega)}}=\frac{1}{\sqrt{\lambda_{N(h)+1}}}
\end{equation}

We obtain the constant in \eref{sids5sasawqeyassdud} by using Weyl's
asymptotic formula for the eigenvalues of the Laplace-Dirichlet operator
on $\Omega$ \cite{MR1511560}.
\begin{equation}\label{sssddesqwsdedud}
\begin{split}
\lambda_k \sim  4\pi
\Big(\frac{\Gamma(1+\frac{d}{2})k}{|\Omega|}\Big)^\frac{2}{d},
\end{split}
\end{equation}
In equation \eref{sssddesqwsdedud}, $|\Omega|$ is the volume of $\Omega$, $d$ is the dimension of
the physical space and $\Gamma$ is the Gamma function defined by
$\Gamma(z):=\int_0^\infty t^{z-1}e^{-t}\,dt$.
 It follows from equation \eref{harmonsefdfddcalwsarmZ} that by defining $V_h=\Theta_h$ one obtain the smallest asymptotic constant in the right hand side of \eref{sids5sasawqeyassdud}. This being said, it should be recalled that the space $\Theta_h$ is not the unique space achieving this optimal constant \cite{Pinkus85}.

For the sake of clarity, an alternate (but similar) proof is provided below.
By proposition \ref{Csorsedfesol4}
\begin{equation}\label{sidasawqeyadsedsssddaud9}
\sup_{f \in L^2(\Omega)} \inf_{v\in V}  \frac{ \|u - v\|_{a\f}}{
\|f\|_{L^2(\Omega)}}= \sup_{z \in (\diiv a \nabla V)^{\perp}}\frac{\|
z\|_{L^2(\Omega)}}{\|\nabla z\|_{(L^2(\Omega))^d}},
\end{equation}

Taking $inf$ of both sides, we have
\begin{equation}\label{sidasawqeyadsedsssddaud99}
\inf_{V,\text{dim}(V)=N(h)}\sup_{f \in L^2(\Omega)} \inf_{v\in V}  \frac{ \|u - v\|_{a\f}}{
\|f\|_{L^2(\Omega)}}= \inf_{V_h,\text{dim}(V_h)=N(h)}\sup_{z \in (\diiv a \nabla V)^{\perp}}\frac{\|
z\|_{L^2(\Omega)}}{\|\nabla z\|_{(L^2(\Omega))^d}},
\end{equation}

Notice that the right hand side is the inverse of Rayleigh quotient, and $(\diiv a \nabla V)^{\perp}$ is a co-dimension $N(h)$ space, then by the Courant-Fischer min-max principle for the eigenvalues, we have
\begin{equation}\label{sidasawqeyadsedsssddaud999}
\inf_{V,\text{dim}(V)=N(h)}\sup_{f \in L^2(\Omega)} \inf_{v\in V}  \frac{ \|u - v\|_{a\f}}{
\|f\|_{L^2(\Omega)}}= \frac{1}{\sqrt{\lambda_{N(h)+1}}}
\end{equation}

Taking $V$ to be $\Theta_h$, then the optimal constant can be achieved asymptotically as $h\to 0$.
\end{proof}
\begin{Remark}
Theorem \ref{Corol1} is related to Melenk's $n$-widths analysis for elliptic problems \cite{MR1766938} where subsets of $L^2$ such as  piecewise discontinuous polynomials have been used  as an approximation basis. The main difference between \cite{MR1766938} and this section lies in the introduction of and the emphasis on the flux-norm ($\|.\|_{a\f}$) with respect to which errors become   independent of the contrast of the coefficients and the regularity of $a$. Moreover, this  allows us to obtain an {\it explicit} and optimal  constant in the  rate of convergence.
\end{Remark}

\begin{Remark}
Write
\begin{equation}\label{sids5sasasesdssdud}
\|g\|_{H^{-\nu}(\Omega)}^2:=\sum_{k=1}^\infty \frac{1}{\lambda_k^\nu}\big(g,\Psi_k\big)^2
\end{equation}
Then the space $\Theta_h$ also satisfies, for $\nu \in [0,1)$,
\begin{equation}\label{sids5sasawqeyasddsdssdud}
\lim_{h\rightarrow 0}\sup_{g \in H^{-\nu}(\Omega)} \inf_{v\in
\Theta_h} \frac{ \|u-v\|_{a\f}}{ h^{1-\nu} \|g\|_{H^{-\nu}(\Omega)}}=
\Big(\frac{1}{2\sqrt{\pi}}
\Big(\frac{1}{\Gamma(1+\frac{d}{2})}\Big)^\frac{1}{d}\Big)^{1-\nu}.
\end{equation}
\end{Remark}

\subsection{Vectorial elasticity equations.}

Let $(e_1,\ldots,e_d)$ be an orthonormal basis of $\R^d$. For $j\in
\{1,\ldots,d\}$ and $k\in \N^*=\{1,2, \cdots\}$, let $\tau_k^j$ be the solution of
\begin{equation}\label{harmonicscalwsar}
    \left\{\begin{array}{ll}
   -\diiv\left( C:\varepsilon( \tau_k^j) \right) = e_j \lambda_k \Psi_k,\; & \text{in } \Omega,\\
    \tau_k^j=0,  & \mbox{on}\;  \partial \Omega,\end{array}\right.
\end{equation}
where  $\Psi_k$ are the eigenfunctions \eref{kjskske23} of the \underline{scalar}
Laplace-Dirichlet operator in $\Omega$.  Let
$M:=\big[|\Omega|/h^d\big]$ be the integer part of $|\Omega|/h^d$
and $T_h$ be the linear space spanned by $\tau_k^j$ for $k\in
\{1,\ldots,M\}$ and $j\in \{1,\ldots,d\}$.

\begin{Remark}
Eigenmodes from a vector Laplace operator work as well. We use the eigenfunctions for a scalar Laplace operator because they are, in principle, simpler to compute and because our proof uses Weyl's asymptotic formula for the eigenvalues of the scalar Laplace-Dirichlet operator in order to obtain
the optimal constant in the right hand side of \eref{sids5sasawqebyassdud} and \eref{sidasaeddeedsssddadduddd999}. Also, the  eigenfunctions for the scalar Laplace operator encode information about the geometry of the domain $\Omega$.
\end{Remark}

\begin{Theorem}\label{Corol1del}
For $b \in (L^2(\Omega))^d$ let $u$ be the solution of
\eref{elasticity}. Then,
\begin{itemize}
\item
\begin{equation}\label{sids5sasawqebyassdud}
\lim_{h\rightarrow 0} \sup_{b \in (L^2(\Omega))^d} \inf_{v \in T_h}
\frac{ \|u-v\|_{C\f}}{h \|b\|_{(L^2(\Omega))^d}}=
\frac{1}{2\sqrt{\pi}}
\Big(\frac{1}{\Gamma(1+\frac{d}{2})}\Big)^\frac{1}{d}.
\end{equation}
Furthermore, the space $T_h$ leads (asymptotically) to the smallest possible constant in the right hand side of \eref{sids5sasawqebyassdud} among all subspaces of $H^1_0(\Omega)$ with $O(|\Omega/h^d|)$ elements.

\item
\begin{equation}\label{sidasaeddeedsssddadduddd999}
\inf_{V,\text{dim}(V)=N}\sup_{b \in (L^2(\Omega))^d} \inf_{v\in V}  \frac{ \|u - v\|_{C\f}}{ \|b\|_{(L^2(\Omega))^d}}= \frac{1}{2 \sqrt{\pi}}
\Big(\frac{|\Omega|}{\Gamma(1+\frac{d}{2})N}\Big)^\frac{1}{d} \big(1+\epsilon(N)\big)
\end{equation}
where the infimum is taken with respect to all subspaces of $(H^1_0(\Omega))^d$ with $N$ elements and $\epsilon(N)$ is converging towards zero as $N\rightarrow \infty$.
\end{itemize}
\end{Theorem}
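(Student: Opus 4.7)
The plan is to mirror the proof of Theorem \ref{Corol1} using the vectorial transfer property (Corollary \ref{corollary1}) in place of its scalar counterpart (Corollary \ref{sjhgdkjshgd}). First I would observe that from the definition \eref{harmonicscalwsar} of $\tau_k^j$ one has $\diiv(C:\varepsilon(\tau_k^j))=-e_j\lambda_k\Psi_k$, so
\[
(\diiv C:\varepsilon(T_h))=\operatorname{span}\{e_j\Psi_k:1\le k\le M,\,1\le j\le d\}.
\]
Setting $X_h:=\operatorname{span}\{e_j\Psi_k:1\le k\le M,\,1\le j\le d\}\subset(H^1_0(\Omega))^d$ and using $\Delta(e_j\Psi_k)=-\lambda_k e_j\Psi_k$, we obtain $\Delta X_h=(\diiv C:\varepsilon(T_h))$. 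Corollary \ref{corollary1} therefore identifies the left-hand side of \eref{sids5sasawqebyassdud} with
\[
\sup_{w\in(H^1_0(\Omega)\cap H^2(\Omega))^d}\inf_{v\in X_h}\frac{\|\nabla w-\nabla v\|_{(L^2(\Omega))^{d\times d}}}{\|\Delta w\|_{(L^2(\Omega))^d}}.
\]

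Next I would expand each component $w_j=\sum_k c_{j,k}\Psi_k$ in the $L^2(\Omega)$-orthonormal basis of Laplace-Dirichlet eigenfunctions, so that $\|\Delta w\|^2=\sum_{j,k}\lambda_k^2 c_{j,k}^2$ and $\|\nabla w\|^2=\sum_{j,k}\lambda_k c_{j,k}^2$. The minimizer in $X_h$ cancels the first $M$ modes in each component, giving $\inf_{v\in X_h}\|\nabla(w-v)\|^2=\sum_j\sum_{k>M}\lambda_k c_{j,k}^2$, and the ratio is maximized by concentrating all mass on a single coefficient $c_{j_0,M+1}$, yielding the value $1/\lambda_{M+1}$. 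Substituting $M=[|\Omega|/h^d]$ into Weyl's asymptotic formula \eref{sssddesqwsdedud} then delivers \eref{sids5sasawqebyassdud}.

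For the optimality claim \eref{sidasaeddeedsssddadduddd999}, I would first record the vectorial analog of Proposition \ref{Csorsedfesol4} (proved by the same Weyl decomposition and duality argument as in the scalar case), which recasts the inner inf-sup as a Rayleigh quotient over the orthogonal complement of $V$. Equivalently, the transfer property lets one transport the optimization over $N$-dimensional $V\subset(H^1_0(\Omega))^d$ to an optimization over $X=\Delta^{-1}(\diiv C:\varepsilon(V))\subset(H^1_0(\Omega))^d$; since $\diiv C:\varepsilon$ is an isomorphism $(H^1_0(\Omega))^d\to(H^{-1}(\Omega))^d$ by Korn's inequality and Lax-Milgram, and $\Delta$ is likewise an isomorphism, $X$ ranges over all $N$-dimensional subspaces of $(H^1_0(\Omega))^d$ as $V$ does. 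Applying the Courant-Fischer min-max principle to the component-wise inverse Laplacian---whose eigenvalues are the scalar $\{\lambda_k\}$ each with multiplicity $d$---and inserting Weyl's formula yields the stated asymptotic constant.

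The main obstacle I foresee is the correct bookkeeping of eigenvalue multiplicities: the $(N+1)$-th eigenvalue of the component-wise Laplacian on $(H^1_0(\Omega))^d$ is $\lambda_{\lceil(N+1)/d\rceil}$, and one must track this multiplicity carefully when substituting into Weyl's formula to recover the leading constant in \eref{sidasaeddeedsssddadduddd999}. The essential point that the vectorial transfer property supplies, just as in the scalar case, is that this final asymptotic constant depends only on $|\Omega|$, $d$, and $N$ and not on the elasticity tensor $C$.
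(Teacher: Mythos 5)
Your proposal follows exactly the paper's approach: the paper's own ``proof'' of this theorem is a single line (``straightforward application of equation \eref{sidasasedzsddaud} of theorem \ref{sdjhskzzhdsdkhje} and Weyl's estimate, the proof is similar to the scalar case''), and you carry that out in detail. The computations for the first bullet are correct: $(\diiv C:\varepsilon(T_h))=\Delta X_h$ with $X_h=\operatorname{span}\{e_j\Psi_k\}_{k\le M,\,j\le d}$, the infimum over $X_h$ cancels the first $M$ modes component-wise, and the supremum of the ratio is $1/\sqrt{\lambda_{M+1}}$, which with Weyl recovers the stated constant.

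However, the eigenvalue-multiplicity issue you flag at the end is not just ``careful bookkeeping that recovers the leading constant''---it actually changes the constant. As you note, the operator at play on $(H^1_0(\Omega))^d$ is the component-wise Laplacian, whose eigenvalues are $\{\lambda_k\}$ each with multiplicity $d$; Courant--Fischer thus gives, for an $N$-dimensional $V$,
\[
\inf_{\dim V=N}\sup_{b}\inf_{v\in V}\frac{\|u-v\|_{C\f}}{\|b\|_{(L^2(\Omega))^d}}=\frac{1}{\sqrt{\lambda_{\lceil (N+1)/d\rceil}}},
\]
and Weyl's formula $\lambda_k\sim 4\pi\bigl(\Gamma(1+\tfrac{d}{2})k/|\Omega|\bigr)^{2/d}$ then yields
\[
\frac{1}{\sqrt{\lambda_{\lceil(N+1)/d\rceil}}}\sim\frac{1}{2\sqrt{\pi}}\Bigl(\frac{d\,|\Omega|}{\Gamma(1+\tfrac{d}{2})N}\Bigr)^{1/d},
\]
which differs from the paper's stated right-hand side of \eref{sidasaeddeedsssddadduddd999} by a factor $d^{1/d}$. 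This $d$-factor is also what reconciles the two bullets: $T_h$ has dimension $dM$ (not $M$), and the error it achieves is $1/\sqrt{\lambda_{M+1}}$; plugging $N=dM$ into the corrected second bullet reproduces this, whereas the formula as printed does not. So rather than deferring to ``careful bookkeeping,'' you should carry out the Weyl substitution explicitly; doing so exposes a missing factor of $d$ in the paper's \eref{sidasaeddeedsssddadduddd999}, which the paper's terse proof (merely ``similar to the scalar case'') overlooks. Everything else in your proposal---the identification $\Delta X_h=(\diiv C:\varepsilon(T_h))$, the isomorphism argument showing $X$ ranges over all $N$-dimensional subspaces as $V$ does, and the vectorial analogue of Proposition \ref{Csorsedfesol4}---is sound.
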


\begin{proof}
Theorem \ref{Corol1del} is a straightforward application of equation \eref{sidasasedzsddaud} of theorem \ref{sdjhskzzhdsdkhje} and Weyl's estimate \eref{sssddesqwsdedud} (the proof is similar to the scalar case).
\end{proof}

Defining $\varphi_k$ as in subsection \ref{khdkhdlkwhe}, for $j\in
\{1,\ldots,d\}$  let $\Phi_k^j$ be the solution of
\begin{equation}\label{harmonicsdsdssdscalwsar}
    \left\{\begin{array}{ll}
   -\diiv\left( C(x):\varepsilon( \Phi_k^j) \right) = e_j \Delta\varphi_k,\; & \text{in } \Omega,\\
    \Phi_k^j=0,  & \mbox{on}\;  \partial \Omega,\end{array}\right.
\end{equation}

Define
\begin{equation}\label{harmseeddcalwddesarjhuuyu}
W_h:=\operatorname{span}\{\Phi_k^j\},
\end{equation}

\begin{Theorem}\label{Corsdsdedwdol1del}
For $b \in (L^2(\Omega))^d$ let $u$ be the solution of
\eref{elasticity}. Then,
\begin{equation}\label{sids5sasawqebyuyuyyassdud}
 \sup_{b \in (L^2(\Omega))^d} \inf_{v \in W_h}
\frac{ \|u-v\|_{C\f}}{ \|b\|_{(L^2(\Omega))^d}}\leq K h.
\end{equation}
where $K$ depends only on $\Omega$ and the aspect ratios of the simplices of $\Omega_h$.
\end{Theorem}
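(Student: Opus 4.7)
The plan is to reduce this to the scalar FEM approximation result (Theorem \ref{jhdjkhsgdkjgeyhde}) by way of the vectorial transfer property, Corollary \ref{corollary1}. Concretely, I would pick for $X$ the vectorial piecewise linear space $X_h := (\mathcal{L}_0^h)^d \subset (H^1_0(\Omega))^d$ with basis $\{\varphi_k e_j : k \in \text{nodes}, 1 \le j \le d\}$, where $\varphi_k$ are the scalar nodal basis functions and $e_j$ is the canonical basis of $\R^d$.

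With this choice, $\Delta X_h = \operatorname{span}\{e_j \Delta \varphi_k\}$. From the defining equation \eqref{harmonicsdsdssdscalwsar}, we have $\diiv(C:\varepsilon(\Phi_k^j)) = -e_j \Delta \varphi_k$, so $(\diiv C:\varepsilon(W_h)) = \Delta X_h$. Corollary \ref{corollary1} then yields
\begin{equation*}
\sup_{b \in (L^2(\Omega))^d} \inf_{v \in W_h} \frac{\|u-v\|_{C\f}}{\|b\|_{(L^2(\Omega))^d}} = \sup_{w \in (H^1_0(\Omega) \cap H^2(\Omega))^d} \inf_{v \in X_h} \frac{\|\nabla w - \nabla v\|_{(L^2(\Omega))^{d\times d}}}{\|\Delta w\|_{(L^2(\Omega))^d}}.
\end{equation*}

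The right-hand side is purely a statement about approximating vector-valued $H^2 \cap H^1_0$ functions by vectorial piecewise linears in the $H^1$-seminorm, componentwise decoupled. For each component $w_j \in H^2 \cap H^1_0(\Omega)$, standard FEM interpolation theory on a regular tessellation $\Omega_h$ of a $C^2$ domain (see e.g.\ \cite{BrSc02}) gives $\inf_{v_j \in \mathcal{L}_0^h} \|\nabla w_j - \nabla v_j\|_{L^2(\Omega)} \le C h \|w_j\|_{H^2(\Omega)}$, and elliptic regularity for the Laplace-Dirichlet problem bounds $\|w_j\|_{H^2(\Omega)} \lesssim \|\Delta w_j\|_{L^2(\Omega)}$. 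Summing over $j$ and using the equivalence $\|\Delta w\|_{(L^2)^d}^2 = \sum_j \|\Delta w_j\|_{L^2}^2$ gives the desired $Kh$ bound.

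There is essentially no obstacle here beyond checking that the transfer identity really applies (one must verify $(\diiv C:\varepsilon(W_h)) = \Delta X_h$, which is immediate from the defining PDE) and invoking componentwise the scalar result used in Theorem \ref{jhdjkhsgdkjgeyhde}. The mild subtlety is just that the vector case decouples completely on the right-hand side of the transfer identity because the Laplacian acts componentwise, so no Korn-type complication appears at this step, even though the original elasticity operator is fully coupled through $C$. The constant $K$ inherits its dependence on $\Omega$ and on the aspect ratios of the simplices of $\Omega_h$ precisely from the scalar interpolation estimate.
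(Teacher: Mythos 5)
Your proposal is correct and follows exactly the paper's route: the paper's own proof is the single sentence ``straightforward application of equation \eqref{sidasasedzsddaud} and the fact that one can approximate $H^2$ functions in the $H^1$ norm by functions from $\mathcal{L}_0^h$ with $\mathcal{O}(h)$ accuracy,'' which is precisely your argument. You have simply spelled out the implicit choice $X = (\mathcal{L}_0^h)^d$, the verification that $(\diiv C:\varepsilon(W_h)) = \Delta X$ from \eqref{harmonicsdsdssdscalwsar}, and the componentwise decoupling of the right-hand side of \eqref{sidasasedzsddaud} (which the paper leaves to the reader).
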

\begin{proof}
Theorem \ref{Corsdsdedwdol1del} is a straightforward application of equation \eref{sidasasedzsddaud} and the fact that one can approximate $H^2$ functions in the $H^1$ norm by functions from $\mathcal{L}_0^h$ with $\mathcal{O}(h)$ accuracy.
\end{proof}

\begin{Corollary}\label{Corsdxsddsol1del}
For $b \in (L^2(\Omega))^d$ let $u$ be the solution of
\eref{elasticity} and $u_h$ the finite element solution of \eref{elasticity} in $W_h$. Then,
\begin{equation}\label{sids5qssdud}
 \sup_{b \in (L^2(\Omega))^d} \inf_{v \in W_h}
\frac{ \|u-u_h\|_{(H^1_0(\Omega))^d}}{ \|b\|_{(L^2(\Omega))^d}}\leq \frac{K}{\lambda_{\min}(C)} h.
\end{equation}
where $K$ depends only on $\Omega$ and the aspect ratios of the simplices of $\Omega_h$.
\end{Corollary}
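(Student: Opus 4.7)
The plan is to mirror the proof of the scalar counterpart (Corollary \ref{jhdjkhsgsddssdgeyhde}), combining Theorem \ref{Corsdsdedwdol1del}, Galerkin best approximation in the energy norm, and Proposition \ref{Prop2a} together with Korn's inequality \eref{sksekdaseshg}. The flux-norm error bound from Theorem \ref{Corsdsdedwdol1del} yields $\inf_{v\in W_h}\|u-v\|_{C\f}\leq Kh\|b\|_{(L^2(\Omega))^d}$, and the task is then to pass from this flux-norm bound on the best approximation to an $H^1$-bound on the Galerkin solution, picking up one factor of $\lambda_{\min}(C)^{-1/2}$ in each of the two conversions.

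The first step is to establish, for every $\psi\in(H^1_0(\Omega))^d$, the bound $\|\psi\|_{C}\leq \lambda_{\min}(C)^{-1/2}\|\psi\|_{C\f}$, where $\|\psi\|_C^2:=\int_\Omega \varepsilon(\psi):C:\varepsilon(\psi)$. This is the vectorial analog of the computation used in Proposition \ref{Prop0}: since $C:\varepsilon(\psi)-\bigl(C:\varepsilon(\psi)\bigr)_{pot}$ is divergence-free and $\varepsilon(\psi)$ is a symmetrized gradient of an $H^1_0$ field, one has $\int \varepsilon(\psi):C:\varepsilon(\psi)=\int \varepsilon(\psi):\bigl(C:\varepsilon(\psi)\bigr)_{pot}$, so Cauchy--Schwarz gives $\|\psi\|_C^2\leq \|\varepsilon(\psi)\|_{(L^2)^{d\times d}}\|\psi\|_{C\f}$; combining with $\lambda_{\min}(C)\|\varepsilon(\psi)\|_{(L^2)^{d\times d}}^2\leq \|\psi\|_C^2$ from \eref{skkzshg} and dividing yields the claim.

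Next, since the problem \eref{elasticity} is symmetric and coercive and $u_h$ is the standard Galerkin projection onto $W_h$, we have the Céa-type identity $\|u-u_h\|_C=\inf_{v\in W_h}\|u-v\|_C$. Chaining this with the previous inequality gives $\|u-u_h\|_C\leq \lambda_{\min}(C)^{-1/2}\inf_{v\in W_h}\|u-v\|_{C\f}$. Converting the energy norm on the left back to a strain norm via \eref{skkzshg}, namely $\sqrt{\lambda_{\min}(C)}\,\|\varepsilon(u-u_h)\|_{(L^2)^{d\times d}}\leq \|u-u_h\|_C$, and then applying Korn's inequality \eref{sksekdaseshg} to pass from $\varepsilon(u-u_h)$ to $\nabla(u-u_h)$, produces
\begin{equation*}
\|\nabla(u-u_h)\|_{(L^2(\Omega))^{d\times d}}\;\leq\; \frac{\sqrt{2}}{\lambda_{\min}(C)}\inf_{v\in W_h}\|u-v\|_{C\f}.
\end{equation*}
Finally, inserting the bound of Theorem \ref{Corsdsdedwdol1del} and absorbing the $\sqrt{2}$ into a new constant $K$ (still depending only on $\Omega$ and the simplex aspect ratios of $\Omega_h$) gives \eref{sids5qssdud}.

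There is no genuine obstacle here; the argument is a short composition of three previously established inequalities. The only mild subtlety is bookkeeping: one has to resist the temptation to bound $\|u-v\|_C$ directly by $\sqrt{\lambda_{\max}(C)}\,\|\varepsilon(u-v)\|$, which would reintroduce the contrast $\lambda_{\max}(C)/\lambda_{\min}(C)$; instead, the energy-to-flux comparison must be made through the potential-projection identity so that only $\lambda_{\min}(C)^{-1}$ appears on the right-hand side, matching the scalar estimate \eref{sids5scdqeyassdud}.
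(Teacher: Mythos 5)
Your proof is correct and follows the same route the paper intends: Theorem~\ref{Corsdsdedwdol1del}, the lower bound in~\eref{skkzshg}, and Korn's inequality~\eref{sksekdaseshg}, with the (necessary, but left implicit in the paper's one-line proof) C\'ea optimality step in the energy norm inserted to pass from the best-approximation error to the Galerkin error. Your observation that one must compare $\|u-v\|_C$ to $\|u-v\|_{C\f}$ through the potential-projection/Cauchy--Schwarz identity rather than through $\lambda_{\max}(C)^{1/2}\|\varepsilon(u-v)\|$ is exactly the bookkeeping needed to avoid a spurious contrast factor, and it mirrors the paper's own proof of Proposition~\ref{Prop2a} (where, strictly speaking, the orthogonality argument runs through $\nabla\psi$ rather than $\varepsilon(\psi)$, a minor technical point that at worst contributes a harmless $\sqrt{2}$ via Korn).
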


\begin{proof}
Corollary \ref{Corsdxsddsol1del} is a straightforward application of theorem \ref{Corsdsdedwdol1del}, inequality \eref{skkzshg} and Korn's inequality \eref{sksekdaseshg}.
\end{proof}

\section{A new class of inequalities}\label{inequalities}
The flux-norm (and harmonic coordinates in the scalar case \cite{MR2292954}) can be used to map a given operator $\diiv(a\nabla)$ ($\diiv(C:\varepsilon(u)$ for elasticity)) onto another operator $\diiv(a'\nabla)$ ($\diiv(C':\varepsilon(u)$)). Among all elliptic operators, those with divergence-free coefficients (as defined below) play a very special role in the sense that they can be written in both a divergence-form and a non-divergence form. We introduce a new class of inequalities for these operators.
We show  that these inequalities  hold under Cordes type conditions on the coefficients and conjecture that they hold without these conditions.

These inequalities will be required to hold only for divergence-free
conductivities because, by using the flux-norm through the transfer property defined in section \ref{sec1} or
harmonic coordinates as in \cite{MR2292954} (for the scalar case), we can map
non-divergence free conductivities onto divergence-free
conductivities and hence deduce homogenization results on the former
from inequalities on the latter.

\subsection{Scalar case.}\label{jhsdgkjgewh3}
 Let $a$ be the conductivity matrix
associated with equation \eref{scalarproblem0}. In this subsection,
we will assume that $a$ is uniformly elliptic, with bounded entries
and divergence free--i.e., for all $l\in \R^d$, $\diiv(a.l)=0$ (that is each column of $a$ is div free);
alternatively, for all $\varphi \in C^\infty_0(\Omega)$
\begin{equation}\label{kshelkejhe34}
\int_{\Omega}\nabla \varphi.a.l=0.
\end{equation}

Assume that $\Omega$ is a bounded domain in $\R^d$. For a
$d\times d$ matrix $M$, define
\begin{equation}
\Hess :M :=\sum_{i,j=1}^d \partial_i \partial_j M_{i,j}.
\end{equation}
We will also denote by $\Delta^{-1}M$ the $d\times d$ matrix defined by
\begin{equation}
(\Delta^{-1}M)_{i,j}=\Delta^{-1}M_{i,j}.
\end{equation}

\begin{Theorem}\label{kshkheie}
Let $a$ be a divergence free conductivity matrix. Then, the following
statements are equivalent for the same constant $C$:
\begin{itemize}
\item There exists $C>0$ such that for all $u\in H^1_0(\Omega)$,
\begin{equation}\label{kssjied}
\|u\|_{L^2(\Omega)}\leq C \big\|\Delta^{-1}\diiv(a\nabla
u)\big\|_{L^2(\Omega)}.
\end{equation}

\item There exists $C>0$ such that for all $u\in H^1_0(\Omega)$,
\begin{equation}\label{inequality1}
\big\|(\diiv(a\nabla))^{-1}\Delta u\big\|_{L^2(\Omega)}\leq C
\big\|u\big\|_{L^2(\Omega)}.
\end{equation}
\item  Writing $\theta_i$ the solutions of \eref{harmonsedcalwsar}. For all $(U_1,U_2,\ldots)\in \R^{\N^*}$,
\begin{equation}\label{skalkjehlkqjh3}
\big\|\sum_{i=1}^\infty U_i \theta_i \big\|^2_{L^2(\Omega)}\leq C^2
\sum_{i=1}^\infty U_i^2.
\end{equation}
\item The inverse of the operator
$-\diiv(a\nabla )$ (with Dirichlet boundary conditions) is a
continuous and bounded operators from $H^{-2}$ onto $L^2$. Moreover,
for $u\in H^{-2}(\Omega)$,
\begin{equation}\label{skksjhsewajjwwsseweee}
\big\|(\diiv a \nabla)^{-1} u\big\|_{L^2(\Omega)} \leq
C\|\Delta^{-1} u\|_{L^2(\Omega)}.
\end{equation}
\item There exists $C>0$ such that for all $u\in H^1_0(\Omega)$,
\begin{equation}\label{sisdswe33seeddqaww3dA}
\|u\|_{L^2(\Omega)}^2\leq C^2 \sum_{i=1}^\infty \left<\diiv(a\nabla
\frac{\Psi_i}{\lambda_i}), u\right>^2_{H^{-1}, H^{1}_0}.
\end{equation}
\item There exists $C>0$ such that
\begin{equation}\label{sissdswe33sedeeddqaww3dA}
\frac{1}{C}\leq \inf_{u\in H^1_0(\Omega)}\sup_{z\in H^2(\Omega)\cap
H^1_0(\Omega) }\frac{(\nabla z,a\nabla
u)_{L^2(\Omega)}}{\|u\|_{L^2(\Omega)} \|\Delta z\|_{L^2(\Omega)}}.
\end{equation}
\item There exists $C>0$ such that for all $u\in H^1_0(\Omega)$,
\begin{equation}\label{kssjsedied}
\|u\|_{L^2(\Omega)}\leq C \big\|\Delta^{-1}\Hess:(a
u)\big\|_{L^2(\Omega)}.
\end{equation}
\item There exists $C>0$ such that for all $u\in H^1_0(\Omega)$,
\begin{equation}\label{kssjsedseied}
\|u\|_{L^2(\Omega)}\leq C \big\|\Hess:(\Delta^{-1}(a
u))\big\|_{L^2(\Omega)}.
\end{equation}
\end{itemize}
\end{Theorem}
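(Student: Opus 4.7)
The plan is to reduce all eight assertions to a single coercivity statement for the bounded operator $T:=\Delta^{-1}L_a:H^1_0(\Omega)\subset L^2(\Omega)\to L^2(\Omega)$, with $L_a:=-\diiv(a\nabla)$ under Dirichlet boundary conditions: namely $\|u\|_{L^2(\Omega)}\le C\|Tu\|_{L^2(\Omega)}$, which is a verbatim restatement of (1). The central algebraic identity, enabled by the div-free hypothesis, is that $\sum_j\partial_j a_{ij}=0$ together with the symmetry of $a$ also gives $\sum_i\partial_i a_{ij}=0$, so that derivatives can be distributed freely and one obtains in the sense of distributions on $\Omega$
\[
\diiv(a\nabla u)\;=\;a_{ij}\partial_i\partial_j u\;=\;\partial_i\partial_j(a_{ij}u)\;=\;\Hess{:}(au).
\]
This identifies (1) with (7) with the same constant.

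The remaining equivalences follow from spectral calculus in the Laplace--Dirichlet basis $\{\Psi_i\}$ together with the self-adjointness of $L_a$ and $\Delta$. For $(1)\Leftrightarrow(4)\Leftrightarrow(2)$: statement (4) is the operator-theoretic reformulation of (1) via the substitution $w=L_a u\in H^{-1}$, extended from $H^{-1}$ to $H^{-2}$ by density (since $H^{-1}\hookrightarrow H^{-2}$ densely and $\Delta^{-1}:H^{-2}\to L^2$ is bounded) using the quantitative estimate (1) itself; and (2) is (4) applied to $w=\Delta u$ for $u\in H^1_0$, noting $\Delta^{-1}\Delta u=u$. For $(1)\Leftrightarrow(3)$: the family $\{\theta_i\}$ is dense in $H^1_0$ because $L_a:H^1_0\to H^{-1}$ is an isomorphism carrying $\theta_i$ to $\lambda_i\Psi_i$, which is complete in $H^{-1}$; expanding $u=\sum U_i\theta_i$ and using $\Delta^{-1}\Psi_i=-\Psi_i/\lambda_i$ yields $\Delta^{-1}L_a u=-\sum U_i\Psi_i$, so Parseval gives $\|\Delta^{-1}L_a u\|_{L^2}^2=\sum U_i^2$, converting (1) into (3). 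For $(1)\Leftrightarrow(5)$: integration by parts and the same eigenvalue identity give $\langle\diiv(a\nabla\Psi_i/\lambda_i),u\rangle_{H^{-1},H^1_0}=-(\Psi_i,\Delta^{-1}L_a u)_{L^2}$, whose squared $\ell^2$ sum is $\|\Delta^{-1}L_a u\|_{L^2}^2$. For $(1)\Leftrightarrow(6)$: integration by parts gives $(\nabla z,a\nabla u)=-(\Delta z,\Delta^{-1}L_a u)_{L^2}$ for $z\in H^2\cap H^1_0$, and since $\Delta$ surjects $H^2\cap H^1_0$ onto $L^2$, the supremum over such $z$ of this quotient divided by $\|\Delta z\|_{L^2}$ returns exactly $\|\Delta^{-1}L_a u\|_{L^2}$.

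The main technical obstacle is $(7)\Leftrightarrow(8)$, because on a bounded domain the Dirichlet inverse $\Delta^{-1}$ does not literally commute with $\partial_i\partial_j$: setting $v_{ij}:=\Delta^{-1}(a_{ij}u)\in H^2\cap H^1_0$, the two functions $\sum_{ij}\partial_i\partial_j v_{ij}$ and $\Delta^{-1}\Hess{:}(au)$ solve the same Poisson equation in $\Omega$ but differ a priori by a harmonic function carrying the nonzero boundary trace of the former. My plan is to reduce both quantities to a common dual expression by pairing them against test functions of the form $\Delta z$ with $z\in H^2\cap H^1_0$ and integrating by parts twice, using $v_{ij}|_{\partial\Omega}=0$ and $z|_{\partial\Omega}=0$ to annihilate all boundary terms; this identifies both $L^2$ norms with $\sup_{z\in H^2\cap H^1_0}(L_a u,z)/\|\Delta z\|_{L^2}$, which is precisely the quantity analyzed in (6). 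The cycle then closes with the same constant $C$.
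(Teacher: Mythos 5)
Your reduction of items (1)--(7) to the single coercivity statement $\|u\|_{L^2}\le C\|\Delta^{-1}L_a u\|_{L^2}$ is correct and follows essentially the same route as the paper: the expansion $u=\sum U_i\theta_i$ giving $\Delta^{-1}L_a u=\pm\sum U_i\Psi_i$ is the paper's argument for $(1)\Leftrightarrow(3)$; the substitutions $w=L_a u$ and $w=\Delta u$ and the density extension for $(4)$ and $(2)$ are the paper's ``observe that $\eqref{kssjied}$ is also equivalent to $\eqref{skksjhsewwsseweee}$''; and the divergence-free identity $\diiv(a\nabla u)=\Hess{:}(au)$ for $(1)\Leftrightarrow(7)$ is reproduced verbatim. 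Your treatment of $(5)$ and $(6)$ via the Parseval/dual characterizations is slightly more explicit than the paper's terse ``is a consequence of $\eqref{skalkjehlkqjh3}$,'' but is the same underlying computation.

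The gap is in $(7)\Leftrightarrow(8)$, and you are right to flag it as the technical obstacle --- the paper simply asserts $\Delta^{-1}\Hess{:}(au)=\Hess{:}\Delta^{-1}(au)$ without justification, which is false with the Dirichlet $\Delta^{-1}$ (the two functions differ by a nonzero harmonic function, as you note). However, your proposed repair does not work. You claim that pairing each quantity against $\Delta z$, $z\in H^2\cap H^1_0(\Omega)$, and integrating by parts twice with $v_{ij}|_{\partial\Omega}=0$, $z|_{\partial\Omega}=0$ identifies both norms with $\sup_z |(L_a u,z)|/\|\Delta z\|_{L^2}$. That identification is valid for $w:=\Delta^{-1}\Hess{:}(au)\in H^1_0$, since $(w,\Delta z)=(\Delta w,z)$ is clean when both factors vanish on $\partial\Omega$, and $(\Hess{:}(au),z)=(a_{ij}u,\partial_i\partial_j z)$ because the trace of $u$ kills the boundary term. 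But for $w':=\sum_{ij}\partial_i\partial_j v_{ij}$ with $v_{ij}\in H^2\cap H^1_0$ there is no clean integration by parts in $(\partial_i\partial_j v_{ij},\Delta z)$: moving $\partial_i\partial_j$ onto $\Delta z$ requires $z\in H^3$ and leaves the boundary term $\int_{\partial\Omega}\partial_j v_{ij}\,\Delta z\,n_i$, which does not vanish (neither $\partial_j v_{ij}$ nor $\Delta z$ has zero trace); moving $\Delta$ onto $\partial_i\partial_j v_{ij}$ requires $v_{ij}\in H^3$ and leaves $\int_{\partial\Omega}\partial_i\partial_j v_{ij}\,\partial_n z$, which again does not vanish. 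The needed identity $\sum_{ij}(\partial_i\partial_j v_{ij},\Delta z)=\sum_{ij}(\Delta v_{ij},\partial_i\partial_j z)$ is a commutation statement of exactly the kind you are trying to avoid, and it already fails for smooth data: on the unit disk with $v_{12}=v_{21}=1-x^2-y^2$, $v_{11}=v_{22}=0$, $z=(1-x^2-y^2)xy$, one computes $2(\partial_1\partial_2 v_{12},\Delta z)=0$ while $2(\Delta v_{12},\partial_1\partial_2 z)=2\pi$. Consequently $\|w'\|_{L^2}$ is \emph{not} given by the dual expression you wrote, and your cycle does not close. Closing $(7)\Leftrightarrow(8)$ requires a genuinely different idea (or a revision of the claim that the constants agree), and the paper's own one-line assertion does not supply it either.
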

\begin{Remark}
Theorem \ref{kshkheie} can be related to the work of Conca
and Vanninathan \cite{MR1838500}, on uniform $H^2$-estimates in periodic homogenization, which established a similar result in the periodic
homogenization setting.
\end{Remark}

\begin{proof}
Let $U_j\in \R$.
 Observe that
\begin{equation}
-\diiv\big(a\nabla\sum_{j=1}^\infty \theta_j U_j)=\sum_{j=1}^\infty
\Psi_j \lambda_j U_j,
\end{equation}
hence
\begin{equation}
-\Delta^{-1}\diiv\big(a\nabla\sum_{j=1}^\infty \theta_j
U_j)=\sum_{j=1}^\infty \Psi_j  U_j.
\end{equation}
Identifying $u$ with $\sum_{j=1}^\infty \theta_j U_j$, it follows
that
\begin{equation}\label{hgfjhffhjgf}
\inf_{u \in L^2(\Omega)} \frac{\big\|\Delta^{-1}\diiv(a\nabla
u)\big\|_{L^2(\Omega)} }{\|u\|_{L^2(\Omega)}}\geq \frac{1}{C}
\end{equation}
is equivalent to
\begin{equation}\label{skksjhee}
\big\|\sum_{j=1}^\infty \theta_j U_j\big\|_{L^2(\Omega)}^2 \leq C^2
 U^T U.
\end{equation}
Observe that equation \eref{kssjied} is also equivalent to
\begin{equation}\label{skksjhsewwsseweee}
\big\|(\diiv a \nabla)^{-1} u\big\|_{L^2(\Omega)} \leq
C\|\Delta^{-1} u\|_{L^2(\Omega)},
\end{equation}
which is equivalent to the fact that the inverse of the operator
$-\diiv(a\nabla )$ (with Dirichlet boundary conditions) is a
continuous and bounded operator from $H^{-2}$ onto $L^2$. Finally,
the equivalence with \eref{sisdswe33seeddqaww3dA} is a consequence
of equation \eref{skalkjehlkqjh3}.
Let us now prove the equivalence with equations \eref{kssjsedied}
and \eref{kssjsedseied}. Observe that if $a$ is a divergence free
$d\times d$ symmetric matrix and $u \in H^2(\Omega)\cap
H^1_0(\Omega)$, then
\begin{equation}
\diiv(a\nabla u)=\Hess:(a u),
\end{equation}
since
\begin{equation}
\Hess:(a u)=\sum_{i,j=1}^d a_{i,j}\partial_i \partial_j
u+\sum_{j=1}^d \sum_{i=1}^d \partial_i a_{i,j} \partial_j u+
\sum_{i=1}^d \sum_{j=1}^d \partial_j a_{i,j} \partial_i u,
\end{equation}
$\sum_{i=1}^d \partial_i a_{i,j}=0$ and $\sum_{j=1}^d
\partial_j a_{i,j}=0$. It follows that
\begin{equation}
\Delta^{-1} \diiv(a\nabla u)= \Delta^{-1} \Hess: (au)= \Hess:
\Delta^{-1} (au),
\end{equation}
which concludes the proof of the equivalence between the statements.
\end{proof}

\begin{Theorem}\label{Prop6s}
If $a$ is divergence-free, then the statements of theorem
\ref{kshkheie} are implied by the following equivalent statements
with the same constant $C$.

\begin{itemize}
\item For all $u \in H^1_0(\Omega)\cap H^2(\Omega)$,
\begin{equation}\label{ksssseseaedsesedfdL}
\|\Delta u\|_{L^2(\Omega)} \leq C \|a:\Hess( u)\|_{L^2(\Omega)}.
\end{equation}
\item There exists $C>0$ such that for $u\in C^\infty_0(\Omega)$,

\begin{equation}\label{sissdswsed3sedeedaddsfsdeqasesedww3dA}
\|k^2 \F(u)\|_{L^2} \leq C \|k^T.\F(au) .k\|_{L^2},
\end{equation}
where $\F(u)$ is the Fourier transform of $u$.
\end{itemize}
 \end{Theorem}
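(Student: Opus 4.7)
The plan has two parts: (a) establish (i) $\Leftrightarrow$ (ii) by Parseval, and (b) derive from (i) the inequality \eqref{kssjied} of Theorem \ref{kshkheie}, which by the chain of equivalences already proved there delivers all the statements listed.

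For (a), the key ingredient is the identity already exploited in the proof of Theorem \ref{kshkheie}: because $a$ is symmetric and divergence-free, for $u\in H^2\cap H^1_0(\Omega)$ one has
\begin{equation*}
a:\Hess(u)\;=\;\Hess:(au)\;=\;\sum_{i,j}\partial_i\partial_j(a_{ij}u).
\end{equation*}
For $u\in C_0^\infty(\Omega)$ extended by zero outside $\Omega$, Parseval gives $\|\Delta u\|_{L^2}=\||k|^2\F(u)\|_{L^2}$ and $\|\Hess:(au)\|_{L^2}=\|k^T\F(au)k\|_{L^2}$, so (i) and (ii) coincide as inequalities on $C_0^\infty(\Omega)$; a mollification/density argument then bridges to general $u\in H^2\cap H^1_0$.

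For (b), I would argue by duality, exploiting the self-adjointness of both $-\Delta$ and $-\diiv(a\nabla)$ on $L^2(\Omega)$. Fix $u\in H^1_0(\Omega)$ and set $g:=\diiv(a\nabla u)\in H^{-1}(\Omega)$; then $\|u\|_{L^2}=\sup_{\psi\in L^2,\,\|\psi\|_{L^2}\leq 1}(u,\psi)_{L^2}$. For each test $\psi\in L^2$, Lax--Milgram (uniform ellipticity of $a$) produces $\phi\in H^1_0$ with $-\diiv(a\nabla\phi)=\psi$; granting that $\phi\in H^2\cap H^1_0$, hypothesis (i) applied to $\phi$ yields $\|\Delta\phi\|_{L^2}\leq C\|\psi\|_{L^2}$. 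Two integrations by parts (using zero Dirichlet data and the symmetry of $a$) rewrite
\begin{equation*}
(u,\psi)_{L^2}\;=\;\int_\Omega\nabla u\cdot a\nabla\phi\;=\;-\langle g,\phi\rangle_{H^{-1},H^1_0}\;=\;-\int_\Omega(\Delta^{-1}g)\,\Delta\phi,
\end{equation*}
so that Cauchy--Schwarz combined with the bound on $\|\Delta\phi\|_{L^2}$ gives $|(u,\psi)_{L^2}|\leq C\|\Delta^{-1}g\|_{L^2}\|\psi\|_{L^2}$; taking the supremum over $\psi$ is exactly \eqref{kssjied}.

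The main technical obstacle is that (i) is an \emph{a priori} bound for functions already in $H^2\cap H^1_0$, whereas the Lax--Milgram solution $\phi$ is only guaranteed to lie in $H^1_0$ when $a$ is merely $L^\infty$ (classical elliptic regularity is not available). The natural fix is to mollify $a$: convolution with a smooth kernel commutes with $\diiv$ (preserving divergence-freeness) and respects the uniform ellipticity bounds, so one obtains smooth divergence-free $a_n\to a$ for which $\phi_n\in H^2\cap H^1_0$ by elliptic regularity and for which (i) holds with the same constant $C$. One then derives \eqref{kssjied} for each $a_n$ with uniform constant and passes to the limit using the uniform $H^1_0$-bound on the corresponding $u_n$ and weak compactness. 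The same smoothing step is also what promotes the Parseval equivalence in (a) from $C_0^\infty$ to the full space $H^2\cap H^1_0$.
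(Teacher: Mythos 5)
Your argument for part (b) — express $\|u\|_{L^2}$ by $L^2$-duality, solve the adjoint problem $-\diiv(a\nabla\phi)=\psi$, apply \eqref{ksssseseaedsesedfdL} to $\phi$, and chain two integrations by parts to reach \eqref{kssjied} — is exactly the paper's argument, merely presented in a dual pairing rather than the paper's inf-sup formulation (the paper picks, for each $u$, the $\psi$ with $a:\Hess(\psi)=u$, and you pick, for each $\psi$, the $\phi$ with $-\diiv(a\nabla\phi)=\psi$; since $a:\Hess=\diiv(a\nabla\cdot)$ for divergence-free $a$, these are the same equation). Your Parseval step for (a) also matches the paper's last sentence. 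So the proposal does not take a genuinely different route; it reproduces the paper's.

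However, the mollification step you add to fix the $H^2$-regularity of $\phi$ does not work. You need \eqref{ksssseseaedsesedfdL} to hold for the mollified coefficient $a_n$ — ideally with the same constant — but that inequality is a \emph{hypothesis} about the fixed coefficient $a$, and there is no reason it should transfer to $a_n=a*\rho_n$: the map $a\mapsto a:\Hess(u)$ is not stable enough in $L^\infty$ for a lower $L^2$-bound to survive convolution, and writing $a_n:\Hess(u)=a:\Hess(u)+(a_n-a):\Hess(u)$ shows the error $(a_n-a):\Hess(u)$ cannot be controlled without quantitative smallness of $\|a_n-a\|_{L^\infty}$, which mollification of a general $L^\infty$ matrix does not provide. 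Moreover, even if you apply \eqref{ksssseseaedsesedfdL} for $a$ to the $\phi_n\in H^2\cap H^1_0$ obtained from the smooth $a_n$, the right-hand side is $\|a:\Hess(\phi_n)\|_{L^2}$, not $\|a_n:\Hess(\phi_n)\|_{L^2}=\|\psi\|_{L^2}$, so you still do not get the uniform bound $\|\Delta\phi_n\|_{L^2}\leq C\|\psi\|_{L^2}$ that your duality chain requires. The regularity issue you flag is real (the paper also glosses over it), but the correct way to resolve it is to use \eqref{ksssseseaedsesedfdL} itself as the a priori estimate showing that $\diiv(a\nabla\cdot):H^2\cap H^1_0\to L^2$ has closed range and, combined with self-adjointness/injectivity, that this range is all of $L^2$ — not to perturb the coefficient. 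A secondary caution: $C_0^\infty(\Omega)$ is dense in $H^2_0(\Omega)$ but \emph{not} in $H^2(\Omega)\cap H^1_0(\Omega)$ (the normal trace need not vanish), so "a mollification/density argument bridges to $H^2\cap H^1_0$" for part (a) is also not quite right as stated; one only needs the implication from \eqref{ksssseseaedsesedfdL} on all of $H^2\cap H^1_0$ to \eqref{sissdswsed3sedeedaddsfsdeqasesedww3dA} on $C_0^\infty$, which is immediate without any density.
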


 \begin{Remark}
 Concerning equation \eref{sissdswsed3sedeedaddsfsdeqasesedww3dA} since $u$ is compactly supported in $\Omega$, $u$  can be
 extended by zero outside of $\Omega$ without creating a Dirac part on its Hessian and $\F(u)$ is the Fourier transform of this extension.
 \end{Remark}

 \begin{proof}
Equation \eref{kssjsedied} is equivalent to
\begin{equation}\label{kssjsedaqeaediedL}
\frac{1}{C}\leq \inf_{u\in H^1_0(\Omega)} \sup_{\varphi \in
L^2(\Omega)} \frac{\big(\varphi,\Delta^{-1}\Hess:(a
u)\big)_{L^2(\Omega)}}{\|u\|_{L^2(\Omega)}
\|\varphi\|_{L^2(\Omega)}}.
\end{equation}
Denoting by $\psi$ the solution of $\Delta \psi=\varphi$ in
$H^1_0(\Omega)\cap H^2(\Omega)$, we obtain that
\eref{kssjsedaqeaediedL} is equivalent to

\begin{equation}\label{kssjsedssessedfdL}
\frac{1}{C}\leq \inf_{u\in H^1_0(\Omega)} \sup_{\psi \in
H^1_0(\Omega)\cap H^2(\Omega)} \frac{\big(\psi,\Hess:(a
u)\big)_{L^2(\Omega)}}{\|u\|_{L^2(\Omega)} \|\Delta
\psi\|_{L^2(\Omega)}}.
\end{equation}
Integrating by parts, we obtain that \eref{kssjsedssessedfdL} is
equivalent to
\begin{equation}\label{kssjseseddssedsesedfdL}
\frac{1}{C}\leq \inf_{u\in H^1_0(\Omega)} \sup_{\psi \in
H^1_0(\Omega)\cap H^2(\Omega)}
\frac{\big(a:\Hess(\psi),u\big)_{L^2(\Omega)}}{\|u\|_{L^2(\Omega)}
\|\Delta \psi\|_{L^2(\Omega)}}.
\end{equation}
Since $a$ is divergence free, $a:\Hess=\diiv(a\nabla .)$ and so there
exists $\psi$ such that $a:\Hess(\psi)=u$ with Dirichlet boundary
conditions. For such a $\psi$, we have
\begin{equation}\label{ksssedfsedsesedfdL}
\frac{\big(a:\Hess(\psi),u\big)_{L^2(\Omega)}}{\|u\|_{L^2(\Omega)}
\|\Delta \psi\|_{L^2(\Omega)}}=\frac{\|a:\Hess(
\psi)\|_{L^2(\Omega)}}{\|\Delta \psi\|_{L^2(\Omega)}}.
\end{equation}
It follows that inequality \eref{kssjseseddssedsesedfdL} is implied
by the inequality
\begin{equation}\label{kssssseesedsesedfdL}
\frac{1}{C}\leq \inf_{\psi \in H^1_0(\Omega)\cap H^2(\Omega)}
\frac{\|a:\Hess( \psi)\|_{L^2(\Omega)}}{\|\Delta
\psi\|_{L^2(\Omega)}}.
\end{equation}
The equivalence with \eref{sissdswsed3sedeedaddsfsdeqasesedww3dA}
follows from $a:\Hess(u)=\Hess:(au)$ and the conservation of the
$L^2$-norm by the Fourier transform.
 \end{proof}

\begin{Theorem}\label{swsmdblbhwsjhw3}
Let $a$ be a divergence free conductivity matrix.
\begin{itemize}
\item If $d=1$, then the statements of theorem \ref{Prop6s} are
true.
\item If $d=2$ and $\Omega$ is convex then  the statements of theorem \ref{Prop6s} are
true.
\item If $d\geq 3$, $\Omega$ is convex and the following Cordes
condition is satisfied
\begin{equation}\label{sshgdswsdsded7641}
\esssup_{x\in \Omega}\Big(
d-\frac{\big(\Tr[a(x)]\big)^2}{\Tr[{a^T(x)a(x)}]}\Big)<1
\end{equation}
then the  statements of theorem \ref{Prop6s} are true.
\item If $d\geq 2$, $\Omega$ is non-convex then there exists $C_\Omega>0$ such that if the following Cordes
condition is satisfied
\begin{equation}\label{sshgdswsdsdsedesed7641}
\esssup_{x\in \Omega}\Big(
d-\frac{\big(\Tr[a(x)]\big)^2}{\Tr[{a^T(x)a(x)}]}\Big)<C_\Omega
\end{equation}
then the  statements of theorem \ref{Prop6s} are true.
\end{itemize}
\end{Theorem}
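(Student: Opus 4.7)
\medskip

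\noindent\textbf{Proof plan for Theorem \ref{swsmdblbhwsjhw3}.}

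The central estimate to establish is \eqref{ksssseseaedsesedfdL}, i.e.\
$\|\Delta u\|_{L^2(\Omega)} \leq C\,\|a:\Hess u\|_{L^2(\Omega)}$ for $u \in H^1_0(\Omega)\cap H^2(\Omega)$,
under the hypotheses of each bullet. The plan is to treat the four bullets via one unified Cordes-type argument, reducing each case to (a) a pointwise algebraic bound that takes the form of the Cordes condition, and (b) a domain-dependent Calder\'on--Zygmund / Miranda--Talenti estimate bounding $\|\Hess u\|_{L^2}$ by a multiple of $\|\Delta u\|_{L^2}$. First I would dispose of $d=1$: divergence-free means $a'(x)=0$, so $a$ is a positive constant and the inequality is an equality with $C=1/a$.

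For $d\ge 2$, the core step is the following pointwise identity. Set
\[
\gamma(x):=\frac{\Tr\bigl(a(x)\bigr)}{\Tr\bigl(a^T(x)a(x)\bigr)},
\]
so that a direct computation gives
\[
\bigl|\gamma(x)\,a(x)-I\bigr|^{2}
\;=\;\gamma^{2}|a|^{2}-2\gamma\Tr(a)+d
\;=\;d-\frac{\bigl(\Tr a(x)\bigr)^{2}}{\Tr\bigl(a^{T}(x)a(x)\bigr)}.
\]
Writing $\Delta u = \gamma(x)\,\bigl(a:\Hess u\bigr) - \bigl(\gamma(x)a-I\bigr):\Hess u$ and applying the pointwise Cauchy--Schwarz inequality (Frobenius inner product), one obtains
\[
|\Delta u(x)|\;\le\;\gamma(x)\,|a:\Hess u(x)|\;+\;\bigl|\gamma(x)a(x)-I\bigr|\;|\Hess u(x)|.
\]
Taking $L^{2}$ norms, and using that $\gamma\le \|\gamma\|_\infty<\infty$ by uniform ellipticity, as well as the pointwise Cordes bound $|\gamma a - I|^{2}\le 1-\epsilon$ provided by the assumption, gives
\[
\|\Delta u\|_{L^{2}(\Omega)}
\;\le\;\|\gamma\|_{\infty}\,\|a:\Hess u\|_{L^{2}(\Omega)}\;+\;\sqrt{1-\epsilon}\,\|\Hess u\|_{L^{2}(\Omega)}.
\]

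The remaining step is to absorb the Hessian term. On a convex domain with $C^{2}$ boundary, the Miranda--Talenti inequality (obtained by integrating $\sum_{i,j}(\partial_{ij}u)^{2}$ by parts, which yields a boundary term proportional to the second fundamental form acting on $\nabla u$, nonnegative for convex $\Omega$) gives $\|\Hess u\|_{L^{2}}\le \|\Delta u\|_{L^{2}}$; one then absorbs and obtains $C=\|\gamma\|_\infty/(1-\sqrt{1-\epsilon})$, proving the $d\ge 3$ convex bullet. For $d=2$ convex, one only needs to show the Cordes constant is automatically $<1$: if $\lambda_{1},\lambda_{2}$ denote the eigenvalues of $a(x)$, then
\[
d-\frac{(\Tr a)^{2}}{\Tr(a^{T}a)}=\frac{(\lambda_{1}-\lambda_{2})^{2}}{\lambda_{1}^{2}+\lambda_{2}^{2}},
\]
which is uniformly bounded by $(\lambda_{\max}(a)-\lambda_{\min}(a))^{2}/(\lambda_{\max}(a)^{2}+\lambda_{\min}(a)^{2})<1$. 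So the bullet for $d=2$, convex follows from the general argument with automatic Cordes.

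For the non-convex bullet the convex Miranda--Talenti bound fails; instead, under the $C^{2}$ boundary hypothesis one still has the classical Calder\'on--Zygmund / Agmon--Douglis--Nirenberg inequality $\|\Hess u\|_{L^{2}(\Omega)}\le K_{\Omega}\|\Delta u\|_{L^{2}(\Omega)}$ for some geometric constant $K_{\Omega}\ge 1$. Plugging this in, the absorption step works iff $\sqrt{1-\epsilon}\,K_{\Omega}<1$, which is exactly the stronger Cordes condition \eqref{sshgdswsdsdsedesed7641} with $C_{\Omega}:=1-K_{\Omega}^{-2}$. The main technical obstacle is the convex-domain Miranda--Talenti inequality: it is valid for $H^{2}\cap H^{1}_{0}$ functions on a $C^{2}$ convex domain but requires a careful boundary-term integration by parts (in the non-smooth convex setting one first mollifies and passes to the limit). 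Everything else is algebraic manipulation and pointwise Cauchy--Schwarz.
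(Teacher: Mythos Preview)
Your proposal is correct and follows essentially the same route as the paper. The paper invokes Theorem~1.2.1 of \cite{MPG00} (reproduced in the appendix as Lemma~\ref{kahldssddkguelwhu}) for the convex bullets and cites Corollary~4.1 of \cite{MR1903306} for the non-convex one; the proof of Lemma~\ref{kahldssddkguelwhu} uses exactly your multiplier $\gamma(x)=\Tr(a)/\Tr(a^Ta)$ (called $\alpha$ there), the identity $|\gamma a-I|^2=\beta_a$, pointwise Cauchy--Schwarz, and the Miranda--Talenti inequality on convex domains. The only difference is packaging: the paper runs a contraction-mapping argument on $H^2\cap H^1_0$ to get existence plus the $H^2$ bound, whereas you go straight for the a~priori inequality \eqref{ksssseseaedsesedfdL} via decomposition and absorption, which is slightly more direct since only the inequality is needed here. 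Your treatment of $d=2$ (automatic Cordes via the eigenvalue computation) and of the non-convex case (replace the Miranda--Talenti constant $1$ by the ADN/Calder\'on--Zygmund constant $K_\Omega$) matches the paper's reasoning.

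One small slip: in the non-convex bullet your absorption condition $\sqrt{\beta_a}\,K_\Omega<1$ gives $C_\Omega=K_\Omega^{-2}$, not $1-K_\Omega^{-2}$ as you wrote. (Sanity check: when $K_\Omega=1$ one should recover the convex threshold $C_\Omega=1$, and your formula would give $0$.)
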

\begin{proof}
In dimension one, if $a$ is divergence free then it is a constant
and the  statements of theorem \ref{Prop6s}  are trivially true.
Define
\begin{equation}\label{sshgdswsdsde3ed7641}
\beta_a:=\esssup_{x\in \Omega}\Big(
d-\frac{\big(\Tr[a(x)]\big)^2}{\Tr[{a^T(x)a(x)}]}\Big)
\end{equation}
Theorem  1.2.1 of
 \cite{MPG00} implies that if $\Omega$ is convex and
 $\beta_a<1$, then inequality \eref{ksssseseaedsesedfdL} is true. In
 dimension $2$, if $a$ is uniformly elliptic and bounded, then
 $\beta_a<1$. It follows that
if $d=2$ and $\Omega$ is convex or if $d\geq 3$, $\Omega$ is convex,
and $\beta_a<1$, then the statements of theorem \ref{Prop6s} are
true. The last statement of theorem \ref{swsmdblbhwsjhw3} is a
direct consequence of corollary 4.1 of \cite{MR1903306}.

For the sake of completeness we will include the proof of three bullet points here ($\Omega$ convex).
Write $\mathcal{L}$ the differential operator from $H^2(\Omega)$
onto $L^2(\Omega)$ defined by:
\begin{equation}
\mathcal{L}u:=\sum_{i,j} a_{ij} \partial_i \partial_j u
\end{equation}
Let us consider the equation
\begin{equation}\label{kljdwedddlkwjewlkjer}
\begin{cases}
\mathcal{L}u=f\quad \text{in}\quad \Omega\\
u=0\quad \text{on}\quad \partial \Omega
\end{cases}
\end{equation}
The following lemma corresponds to theorem 1.2.1 of
\cite{MPG00} (and $a$ does not need to be divergence free for the
validity of the following theorem). For the convenience of the reader, we will recall its proof in subsection \ref{ell1} of the appendix.

\begin{Lemma}\label{kahldssddkguelwhu}
Assume $\Omega$ to be convex with $C^2$-boundary. If $\beta_a<1$
then \eref{kljdwedddlkwjewlkjer} has a unique solution and
\begin{equation}
\|u\|_{H^2\cap H^1_0(\Omega)}\leq \frac{\esssup_{\Omega}\alpha(x)}{1-\sqrt{\beta_a}} \|f\|_{L^2(\Omega)}
\end{equation}
where $\alpha(x):=(\Sigma_{i=1}^d a_{ii}(x))/\sum_{i,j=1}^d (a_{ij}(x))^2$
\end{Lemma}

$\beta_a$ is a measure of the anisotropy of $a$. In particular, for
the identity matrix one has $\beta_{I_d}=0$. Furthermore in dimension $2$
\begin{equation}
\beta_a=1-\operatorname{essinf}_{x\in \Omega} \frac{2\lambda_{\min}(a(x))\lambda_{\max}(a(x))}{(\lambda_{\min}(a(x)))^2+(\lambda_{\max}(a(x)))^2}
\end{equation}
and one always have $\beta_a<1$ provided that $a$ is uniformly elliptic and bounded. The first three bullet points of theorem \ref{swsmdblbhwsjhw3} follow by observing that if $\beta_a<1$ then
\begin{equation}
\|u\|_{H^2\cap H^1_0(\Omega)}\leq C \|\sum_{i,j} a_{ij}\partial_i \partial_j u\|_{L^2(\Omega)}
\end{equation}
which implies inequality  \eref{ksssseseaedsesedfdL}.

\end{proof}

\subsubsection{A brief reminder on the mapping using harmonic coordinates.}\label{reminder}
Consider the divergence-form elliptic scalar problem
\eref{scalarproblem0}. Let $F$ denote the harmonic coordinates associated
with \eref{scalarproblem0}--i.e., $F(x)=\big(F_1(x),\ldots,F_d(x)\big)$ is a $d$-dimensional vector field
whose entries satisfy
\begin{equation}\label{harmonic}
\begin{cases}
\diiv a \nabla F_i=0 \quad \text{in}\quad \Omega\\
F_i(x)=x_i \quad \text{on}\quad \partial \Omega.
\end{cases}
\end{equation}

It is easy to show that $F$ is a mapping from $\Omega$ onto
$\Omega$. In dimension one, $F$ is trivially a homeomorphism. In
dimension two, this property still holds for convex  domains
\cite{MR2001070, MR1892102}. In dimensions three and higher, $F$ may
be non-injective (even if $a$ is smooth, we refer to
\cite{MR1892102}, \cite{MR2073507}).

Define $Q$ to be the
positive symmetric $d\times d$ matrix defined by
\begin{equation}\label{kqkjhlkwjhl3}
Q:=\frac{(\nabla F)^T a \nabla F}{\det{\nabla F}}\circ F^{-1}.
\end{equation}

It is shown in \cite{MR2292954} that $Q$ is divergence free.
Moreover, writing $u$ the solution of \eref{scalarproblem0} and $\|u\|_a:=\int_\Omega \nabla u\cdot a\nabla u$ one has for $v\in H^1_0(\Omega)$
\begin{equation}\label{jgjgkjghj}
\|u-v\|_a=\|\hat{u}-\hat{v}\|_Q,
\end{equation}
where $\hat{v}:=v\circ F^{-1}$ and $\hat{u}:=u\circ F^{-1}$ solves
\begin{equation}
-\sum_{i,j} Q_{i,j}\partial_i \partial_j \hat{u}=\frac{g}{\det(\nabla F)}\circ F^{-1}
\end{equation}
Note that \eqref{jgjgkjghj} allows one to transfer the error for a general conductivity matrix $a$ to a special divergence-free conductivity matrix $Q$. Observe that the energy norm was used in  \cite{MR2292954} (and \eref{jgjgkjghj}, instead of the flux norm) under  bounded contrast assumptions
on $a$.

The approximation results obtained in \cite{MR2292954} are based on \eref{jgjgkjghj} and  can also be derived by using the new class of inequalities described above for $Q$.

\subsection{Tensorial case.}\label{NewIneq2}
 Let $C$ be the elastic stiffness matrix
associated with equation \eref{elasticity}. In this subsection, we
will assume that $C$ is uniformly elliptic, has bounded entries and
is divergence free--i.e., $C$ is such that for all $l\in \R^{d\times d}$,
$\diiv(C:l)=0$; alternatively, for all $\varphi \in (C^\infty_0(\Omega))^d$,
\begin{equation}\label{kshelkejhe3se4}
\int_{\Omega}(\nabla \varphi)^T:C:l=0.
\end{equation}

The inequalities given below will allow us to deduce homogenization
results for arbitrary elasticity tensors (not necessarily
divergence-free) by using harmonic displacements and the flux-norm to map non-divergence
free tensors onto divergence-free tensors.

For a $d\times d\times d$ tensor $M$, denote by $\Hess :M$ the vector
\begin{equation}
(\Hess :M)_k :=\sum_{i,j=1}^d \partial_i \partial_j M_{i,j,k}.
\end{equation}
Let $\Delta^{-1}M$ denote the $d\times d\times d$ tensor
defined by
\begin{equation}
(\Delta^{-1}M)_{i,j,k}=\Delta^{-1}M_{i,j,k}.
\end{equation}

The proof of the following theorem is almost identical to the proof
of theorem \ref{kshkheie}.
\begin{Theorem}\label{kshksededsheie}
Let $C$ be a divergence free elasticity tensor. The
following statements are equivalent for the same constant $\gamma$:
\begin{itemize}
\item There exists $\gamma>0$ such that for all $u\in (H^1_0(\Omega))^d$,
\begin{equation}\label{ksxssjied}
\|u\|_{(L^2(\Omega))^d}\leq \gamma \big\|\Delta^{-1}\diiv(C:\varepsilon(
u))\big\|_{(L^2(\Omega))^d}.
\end{equation}
\item There exists $\gamma>0$ such that for all $u\in (H^1_0(\Omega))^d$,
\begin{equation}
\big\|(\diiv(C:\varepsilon( .)))^{-1}\Delta u\big\|_{(L^2(\Omega))^d}\leq
\gamma \big\|u\big\|_{(L^2(\Omega))^d}.
\end{equation}
\item  For all $(U_1,U_2,\ldots)\in (\R^d)^{\N^*}$,
\begin{equation}\label{skalwedwedksjehselkqjh3}
\big\|\sum_{k=1}^\infty \sum_{j=1}^d U_k^j \tau_k^j
\big\|^2_{L^2(\Omega)}\leq \gamma^2 \sum_{k=1}^\infty U_k^2,
\end{equation}
where $\{\tau_k^j\}$ is the  basis defined in
\eref{harmonicscalwsar}.
\item The inverse of the operator
$-\diiv(C:\varepsilon(.) )$ (with Dirichlet boundary conditions) is a
continuous and bounded operator from $(H^{-2})^d$ onto $(L^2)^d$.
Moreover, for $u\in (H^{-2}(\Omega))^d$,
\begin{equation}\label{skksjhsewadejjwwsseweee}
\big\|(\diiv C: \varepsilon(.))^{-1} u\big\|_{(L^2(\Omega))^d} \leq \gamma
\|\Delta^{-1} u\|_{(L^2(\Omega))^d}.
\end{equation}
\item There exists $\gamma>0$ such that for all $u\in (H^1_0(\Omega))^d$,
\begin{equation}\label{sisdswe33seesdrfdzsdsqaww3dA}
\|u\|_{(L^2(\Omega))^d}^2\leq \gamma^2 \sum_{i=1}^\infty
\sum_{j=1}^d \left<(\diiv\big(C: (
\frac{\nabla\Psi_i}{\lambda_i}\otimes e_j)\big),
u\right>^2_{(H^{-1},H^1)}.
\end{equation}

\item There exists $\gamma>0$ such that
\begin{equation}\label{sissdswe33zdsdsedeeddqaww3dA}
\frac{1}{\gamma}\leq \inf_{u\in (H^1_0(\Omega))^d}\sup_{z\in
(H^2(\Omega)\cap H^1_0(\Omega))^d }\frac{((\nabla z)^T:C:\varepsilon(
u))_{L^2(\Omega)}}{\|u\|_{(L^2(\Omega))^d} \|\Delta
z\|_{(L^2(\Omega))^d}}.
\end{equation}
\item There exists $\gamma>0$ such that for all $u\in (H^1_0(\Omega))^d$,
\begin{equation}\label{kssjsskesekwkedied}
\|u\|_{(L^2(\Omega))^d}\leq \gamma
\big\|\Delta^{-1}\Hess:(u.C)\big\|_{L^2(\Omega)}.
\end{equation}
\item There exists $\gamma>0$ such that for all $u\in (H^1_0(\Omega))^d$,
\begin{equation}\label{kssjsedsdsdeseied}
\|u\|_{(L^2(\Omega))^d}\leq \gamma
\big\|\Hess:(\Delta^{-1}(u.C))\big\|_{(L^2(\Omega))^d}.
\end{equation}
\end{itemize}
\end{Theorem}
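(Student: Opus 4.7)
The plan is to transcribe the proof of Theorem~\ref{kshkheie} with the following dictionary: the scalar basis $\{\theta_i\}$ is replaced by the vector basis $\{\tau_k^j\}$ from \eref{harmonicscalwsar}; $a\nabla(\cdot)$ is replaced by $C:\varepsilon(\cdot)$; $\Delta^{-1}$ is understood to act componentwise on $(L^2(\Omega))^d$; and the scalar identity $\diiv(a\nabla u)=\Hess:(au)$ (valid for divergence-free $a$) is replaced by a tensorial analog obtained from $\sum_i\partial_i C_{ijkl}=0$ combined with the symmetries of $C$. Since $\{\Psi_k e_j\}_{k,j}$ is an orthogonal basis of $(L^2(\Omega))^d$, it plays the role that $\{\Psi_k\}$ played in the scalar proof.

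First I would establish the equivalences among \eref{ksxssjied}, \eref{skalwedwedksjehselkqjh3}, \eref{skksjhsewadejjwwsseweee}, and \eref{sisdswe33seesdrfdzsdsqaww3dA}. Using \eref{harmonicscalwsar}, the expansion $u=\sum_{k,j}U_k^j\tau_k^j$ gives
\begin{equation}
-\Delta^{-1}\diiv(C:\varepsilon(u))=\sum_{k,j}e_j\Psi_k U_k^j,
\end{equation}
so its $L^2$-norm equals the $\ell^2$-norm of $(U_k^j)$ up to the normalization of the $\Psi_k$. This identifies \eref{ksxssjied} with \eref{skalwedwedksjehselkqjh3}. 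The bound \eref{skksjhsewadejjwwsseweee} is the reformulation of \eref{ksxssjied} as the boundedness of the inverse operator from $(H^{-2})^d$ onto $(L^2)^d$ (extended by density from smooth $u$), and \eref{sisdswe33seesdrfdzsdsqaww3dA} follows by pairing this inverse bound with the test elements $\diiv(C:(\nabla\Psi_i/\lambda_i\otimes e_j))$ and invoking the preceding basis identity.

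The inf-sup statement \eref{sissdswe33zdsdsedeeddqaww3dA} would be handled via the tensorial integration by parts
\begin{equation}
((\nabla z)^T:C:\varepsilon(u))_{L^2}=-(z,\diiv(C:\varepsilon(u)))_{L^2},
\end{equation}
combined with the componentwise surjectivity of $-\Delta:(H^2\cap H^1_0)^d\to(L^2)^d$: reparametrizing $z$ by $\varphi=-\Delta z$, the supremum in \eref{sissdswe33zdsdsedeeddqaww3dA} becomes $\|\Delta^{-1}\diiv(C:\varepsilon(u))\|_{L^2}/\|u\|_{L^2}$, which reduces to \eref{ksxssjied}.

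For the last two statements \eref{kssjsskesekwkedied} and \eref{kssjsedsdsdeseied}, I would establish the tensorial identity
\begin{equation}
(\diiv(C:\varepsilon(u)))_k=\sum_{i,j}\partial_i\partial_j (u.C)_{i,j,k},
\end{equation}
with $(u.C)_{i,j,k}:=\sum_l u_l C_{ijkl}$, using the divergence-free hypothesis $\sum_i\partial_i C_{ijkl}=0$ (and its counterpart $\sum_j\partial_j C_{ijkl}=0$ via $C_{ijkl}=C_{jikl}$) together with $C_{ijkl}=C_{ijlk}$ to replace $\nabla u$ by $\varepsilon(u)$. From here, \eref{kssjsskesekwkedied} is obtained by rewriting $\diiv(C:\varepsilon(u))$ as $\Hess:(u.C)$ inside \eref{ksxssjied}, and \eref{kssjsedsdsdeseied} follows by commuting $\Delta^{-1}$ with $\Hess:$ componentwise. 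The main obstacle I anticipate is precisely this tensor bookkeeping: the contraction defining $u.C$ must be consistent with all of the symmetries of $C$ so that the double divergence of $u.C$ reproduces $\diiv(C:\varepsilon(u))$ exactly, with no spurious first- or zeroth-order terms in $u$. Beyond this identity, the argument is a faithful line-by-line transcription of the proof of Theorem~\ref{kshkheie}.
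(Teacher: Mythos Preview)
Your proposal is correct and follows exactly the approach the paper intends: the paper's proof consists of the single sentence that the argument is ``almost identical to the proof of theorem~\ref{kshkheie},'' and your dictionary and step-by-step transcription are precisely that. The only point to watch is the one you already flagged---the contraction defining $(u.C)_{i,j,k}$ must be taken so that $\Hess:(u.C)$ reproduces $\diiv(C:\varepsilon(u))$ under the divergence-free and symmetry hypotheses (e.g.\ $(u.C)_{i,j,k}=\sum_l u_l C_{ikjl}$ works), but this is pure index bookkeeping and does not affect the structure of the argument.
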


\begin{Theorem}\label{Prewkwop6s}
If $C$ is divergence-free, the statements of theorem
\ref{kshksededsheie} are implied by the following statement
 with the same constant $\gamma$.

\begin{itemize}
\item For all $u \in (H^1_0(\Omega)\cap H^2(\Omega))^d$,
\begin{equation}\label{ksssseseaedsessedfdL}
\|\Delta u\|_{(L^2(\Omega))^d} \leq \gamma
\|\Hess:(u.C)\|_{(L^2(\Omega))^d}.
\end{equation}
\end{itemize}
 \end{Theorem}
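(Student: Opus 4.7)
The plan is to mimic the proof of Theorem \ref{Prop6s} verbatim, upgrading scalar quantities to vector-valued ones and contracting one additional index throughout. I would begin from the equivalent statement \eref{kssjsskesekwkedied},
\[
\|u\|_{(L^2(\Omega))^d}\leq \gamma \big\|\Delta^{-1}\Hess:(u.C)\big\|_{L^2(\Omega)},
\]
rewrite it in inf-sup form,
\[
\frac{1}{\gamma}\leq \inf_{u\in (H^1_0(\Omega))^d}\sup_{\varphi\in (L^2(\Omega))^d}\frac{\big(\varphi,\Delta^{-1}\Hess:(u.C)\big)_{(L^2(\Omega))^d}}{\|u\|_{(L^2(\Omega))^d}\|\varphi\|_{(L^2(\Omega))^d}},
\]
and perform the componentwise change of variable $\psi := \Delta^{-1}\varphi \in (H^2(\Omega)\cap H^1_0(\Omega))^d$, which substitutes $\|\Delta\psi\|_{(L^2(\Omega))^d}$ for $\|\varphi\|_{(L^2(\Omega))^d}$ in the denominator.

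Two integrations by parts (the boundary terms vanish because $\psi \in (H^1_0)^d$) yield the identity $(\psi,\Hess:(u.C))_{(L^2(\Omega))^d}=(u,C:\Hess(\psi))_{(L^2(\Omega))^d}$, so what has to be proved is the lower bound
\[
\frac{1}{\gamma}\leq \inf_{u}\sup_{\psi}\frac{\big(u,C:\Hess(\psi)\big)_{(L^2(\Omega))^d}}{\|u\|_{(L^2(\Omega))^d}\|\Delta\psi\|_{(L^2(\Omega))^d}}.
\]
The divergence-freeness of $C$ together with its minor/major symmetries gives the pointwise identity $C:\Hess(\psi)=\diiv(C:\varepsilon(\psi))=\Hess:(\psi.C)$, the tensorial analogue of $a:\Hess=\diiv(a\nabla\,\cdot\,)$ used in the scalar case. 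By Lax--Milgram (with Korn's inequality supplying coercivity), the operator $\psi\mapsto -\diiv(C:\varepsilon(\psi))$ with Dirichlet data is an isomorphism onto $(H^{-1}(\Omega))^d$, so for each $u\in (L^2(\Omega))^d$ there is a unique $\psi_u\in (H^1_0(\Omega))^d$ with $C:\Hess(\psi_u)=u$.

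Restricting the supremum to $\psi=\psi_u$ collapses the quotient to $\|u\|_{(L^2(\Omega))^d}/\|\Delta\psi_u\|_{(L^2(\Omega))^d}$. The hypothesis \eref{ksssseseaedsessedfdL}, applied to $\psi_u$ and combined with $\Hess:(\psi_u.C)=C:\Hess(\psi_u)=u$, gives $\|\Delta\psi_u\|_{(L^2(\Omega))^d}\leq \gamma\,\|u\|_{(L^2(\Omega))^d}$, whence the quotient is $\geq 1/\gamma$. This proves the inf-sup lower bound, hence \eref{kssjsskesekwkedied}, and through Theorem \ref{kshksededsheie} all of the equivalent statements listed there hold with the same constant $\gamma$. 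The main obstacle I expect is purely notational: pinning down an unambiguous convention for the $d\times d\times d$ object $u.C$ (obtained by contracting one index of $C$ with the vector $u$) and then verifying, using the symmetries $C_{ijkl}=C_{jikl}=C_{ijlk}=C_{klij}$ together with the divergence-free condition $\partial_j C_{ijkl}=0$, that both the integration-by-parts identity $(\psi,\Hess:(u.C))=(u,C:\Hess(\psi))$ and the rewriting $C:\Hess(\psi)=\Hess:(\psi.C)=\diiv(C:\varepsilon(\psi))$ hold without sign or symmetry errors. Once these index manipulations are fixed, the scalar argument of Theorem \ref{Prop6s} transfers line by line.
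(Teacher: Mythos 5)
Your proof is correct and takes essentially the same approach as the paper, which merely states ``the proof is similar to that of theorem~\ref{Prop6s}.'' You have correctly carried out the line-by-line tensorial upgrade of the scalar argument: inf-sup reformulation of \eref{kssjsskesekwkedied}, the change of variable $\psi = \Delta^{-1}\varphi$, integration by parts via the symmetries of $C$, the identification $C:\Hess(\psi) = \diiv(C:\varepsilon(\psi)) = \Hess:(\psi.C)$ from divergence-freeness, and the restriction of the supremum to $\psi_u$ solving $C:\Hess(\psi_u)=u$.
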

 \begin{proof}
The proof is similar to that of theorem \ref{Prop6s}.
 \end{proof}

\subsubsection{A Cordes Condition for tensorial non-divergence form elliptic equations}

Let us now show that the inequality in theorem \ref{Prewkwop6s},
and hence the inequalities of theorem \ref{kshksededsheie}, are
satisfied if $C$ satisfies a Cordes type condition. The proof of the
following theorem is an adaptation of the proof of theorem 1.2.1 of
\cite{MPG00} (note that $C$ does not need to be divergence free in order for
the following theorem to be valid).

Let $\mathcal{L}$ denote the differential operator from $(H^2(\Omega)^d$
onto $(L^2(\Omega))^d$ defined by
\begin{equation}
(\mathcal{L}u)_j:=\sum_{i,k,l} C_{ijkl} \partial_i \partial_k u_l.
\end{equation}

Let us consider the equation
\begin{equation}\label{kljdlkwjewlkjer}
\begin{cases}
\mathcal{L}u=f\quad \text{in}\quad \Omega\\
u=0\quad \text{on}\quad \partial \Omega.
\end{cases}
\end{equation}

Let $B$ be the $d\times d$ matrix defined by $B_{jm}=\sum_{k=1}^d
C_{kmkj}$. Let $A$ be the $d\times d$ matrix defined by
$A_{j'm}=\sum_{i,k,l=1}^d C_{imkl} C_{ij'kl}$. Define
\begin{equation}\label{shjdghshkwjw}
\beta_{C}:= d^2 -\Tr[BA^{-1}B^T].
\end{equation}

\begin{Theorem}\label{kahlkguelwhu}
Assume $\Omega$ is convex with a $C^2$-boundary. If $\beta_C<1$,
then \eref{kljdlkwjewlkjer} has a unique solution and
\begin{equation}
\|u\|_{(H^2\cap H^1_0(\Omega))^d}\leq K \|f\|_{(L^2(\Omega))^d},
\end{equation}
where $K$ is a function of $\beta_C$ and $\|B
A^{-1}\|_{(L^\infty(\Omega))^{d\times d}}$.
\end{Theorem}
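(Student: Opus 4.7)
The plan is to adapt, componentwise, the scalar Cordes argument from Lemma \ref{kahldssddkguelwhu}: find a matrix-valued multiplier $N(x)\in\R^{d\times d}$ so that $N\mathcal{L}u$ is $L^2$-close to the vector Laplacian $\Delta u$, then close the estimate using the vector version of the Miranda--Talenti inequality on convex domains. Concretely, for $u\in (H^2\cap H^1_0(\Omega))^d$ write $H_{ikl}=\partial_i\partial_k u_l$ and
\begin{equation*}
\bigl(\Delta u - N\mathcal{L}u\bigr)_m=\sum_{i,k,l}D^N_{mikl}\,H_{ikl},\qquad D^N_{mikl}:=\delta_{ik}\delta_{lm}-\sum_{j}N_{mj}C_{ijkl}.
\end{equation*}
Applying Cauchy--Schwarz pointwise in $x$ and summing in $m$, then applying Miranda--Talenti to each scalar component $u_l\in H^2\cap H^1_0(\Omega)$ (which is available because $\Omega$ is convex with $C^2$-boundary), gives
\begin{equation*}
\|\Delta u-N\mathcal{L}u\|_{(L^2(\Omega))^d}^2\le \bigl\|\textstyle\sum_{m,i,k,l}(D^N_{mikl})^2\bigr\|_{L^\infty(\Omega)}\sum_{l}\|\nabla^2 u_l\|_{L^2}^2\le \bigl\|\textstyle\sum_{m,i,k,l}(D^N_{mikl})^2\bigr\|_{L^\infty(\Omega)}\,\|\Delta u\|_{(L^2(\Omega))^d}^2.
\end{equation*}

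Next I would optimize the pointwise multiplier. Expanding the square and using the major symmetry $C_{ijkl}=C_{klij}$ to recognise $\sum_k C_{kjkm}=B_{mj}$ and $\sum_{i,k,l}C_{imkl}C_{ij'kl}=A_{j'm}$, one obtains the quadratic form
\begin{equation*}
\sum_{m,i,k,l}(D^N_{mikl})^2 \;=\; \Tr\!\bigl[N A N^T\bigr]-2\,\Tr\!\bigl[N B^T\bigr]+d^2.
\end{equation*}
Since $A$ is symmetric positive definite (it is the Gram matrix of the maps $C_{i\cdot k l}$ as elements of $\R^{d^2\cdot d}$; uniform ellipticity of $C$ gives invertibility), the minimum in $N$ is attained at $N(x)=B(x)A(x)^{-1}$, with minimum value exactly $d^2-\Tr[BA^{-1}B^T]=\beta_C(x)$. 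With this choice one gets
\begin{equation*}
\|\Delta u-N\mathcal{L}u\|_{(L^2(\Omega))^d}\le \sqrt{\beta_C}\,\|\Delta u\|_{(L^2(\Omega))^d}.
\end{equation*}

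Combining with the triangle inequality $\|N\mathcal{L}u\|\le \|BA^{-1}\|_{(L^\infty(\Omega))^{d\times d}}\|f\|$ yields
\begin{equation*}
(1-\sqrt{\beta_C})\,\|\Delta u\|_{(L^2(\Omega))^d}\;\le\;\|BA^{-1}\|_{(L^\infty(\Omega))^{d\times d}}\,\|f\|_{(L^2(\Omega))^d},
\end{equation*}
and standard $H^2$ regularity for the componentwise Poisson problem on a convex $C^2$ domain (together with the Poincar\'e inequality) promotes the control of $\|\Delta u\|$ to control of $\|u\|_{(H^2\cap H^1_0(\Omega))^d}$, giving the stated estimate with $K$ a function of $\beta_C$ and $\|BA^{-1}\|_{(L^\infty(\Omega))^{d\times d}}$. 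Uniqueness is immediate from the a priori bound, and existence follows from the method of continuity applied to the homotopy $t\mathcal{L}+(1-t)\Delta$ (with the bound above ensuring uniform invertibility along the path).

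The only genuinely new work relative to the scalar case is the algebra that identifies the optimal multiplier as $N=BA^{-1}$ and checks that the residual quadratic form reduces precisely to $\beta_C=d^2-\Tr[BA^{-1}B^T]$; the Miranda--Talenti step is truly componentwise since $\Delta$ acts diagonally on the components, so convexity of $\Omega$ gives $\sum_l\|\nabla^2 u_l\|_{L^2}^2\le\|\Delta u\|_{(L^2)^d}^2$ without any system-level obstruction. I expect the main technical subtlety to be verifying that $A(x)$ is uniformly positive definite (and hence uniformly invertible) under the assumed uniform ellipticity of $C$, which is what makes $N=BA^{-1}$ a legitimate $L^\infty$ multiplier and makes $\|BA^{-1}\|_{(L^\infty(\Omega))^{d\times d}}$ finite.
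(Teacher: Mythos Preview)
Your proposal is correct and follows essentially the same approach as the paper: both introduce a matrix multiplier, use Cauchy--Schwarz together with the (componentwise) Miranda--Talenti inequality on convex domains, and optimize the multiplier to arrive at $N=\alpha^*=BA^{-1}$ with residual exactly $\beta_C$. The only difference is how existence is obtained: the paper sets up the map $T$ defined by $\Delta(Tw)=\alpha^* f+\Delta w-\alpha^*\mathcal{L}w$ and shows directly that $\beta_C<1$ makes $T$ a contraction on $(H^2\cap H^1_0(\Omega))^d$, so the Banach fixed-point theorem yields existence, uniqueness, and the a priori bound simultaneously; you instead derive the a priori bound first and invoke the method of continuity. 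Both routes are standard and equivalent in strength here.
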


\begin{Remark}
$\beta_C$ is a measure of the anisotropy of $C$. In particular, for
the identity tensor, one has $\beta_{I_d}=0$.
\end{Remark}

\begin{proof}

Let $u$ be the solution of $\mathcal{L}u=f$ with Dirichlet boundary
conditions (assuming that it exists). Let $\alpha$ be a field of
$d\times d$ invertible matrices. Observe that \eref{kljdlkwjewlkjer}
is equivalent to
\begin{equation}
\Delta u=\alpha f+\Delta u-\alpha \mathcal{L}u.
\end{equation}

Consider the mapping $T:(H^2\cap H^1_0(\Omega))^d\rightarrow
(H^2\cap H^1_0(\Omega))^d$ defined by $v=Tw$, where $v$ be the unique
solution of the Dirichlet problem for Poisson equation
\begin{equation}
\Delta v=\alpha f+\Delta w-\alpha \mathcal{L}w.
\end{equation}

Let us now choose $\alpha$ so that $T$ is a contraction.

Note that
\begin{equation}
\begin{split}
\big\|T w_1-T w_2\big\|_{(H^2\cap
H^1_0(\Omega))^d}=\|v_1-v_2\|_{(H^2\cap H^1_0(\Omega))^d}.
\end{split}
\end{equation}
Using the convexity of $\Omega$, one obtains the following classical
inequality satisfied by the Laplace operator (see lemma 1.2.2 of
\cite{MPG00}):
\begin{equation}
\begin{split}
\|v_1-v_2\|_{(H^2\cap H^1_0(\Omega))^d}\leq \|\Delta
(v_1-v_2)\|_{(L^2(\Omega))^d}.
\end{split}
\end{equation}
Hence,
\begin{equation}
\begin{split}
\big\|T w_1-T w_2\big\|_{(H^2\cap H^1_0(\Omega))^d}^2\leq &\|\Delta
(w_1-w_2)-\alpha \mathcal{L}(w_1-w_2)\|_{(L^2(\Omega))^d}^2\\
= &  \Big\|\sum_{i,j,k,l=1}^d e_j
\big(\delta_{jl}\delta_{ki}-\sum_{j'=1}^d\alpha_{jj'} C_{ij'kl}\big)
\partial_i
\partial_k (w_1^l-w_2^l)\Big\|_{(L^2(\Omega))^d}^2.
\end{split}
\end{equation}
Using the Cauchy-Schwarz  inequality, we obtain that
\begin{equation}
\begin{split}
\big\|T w_1-T w_2\big\|_{(H^2\cap H^1_0(\Omega))^d}^2\leq &
\int_{\Omega}  \big(\sum_{i,j,k,l=1}^d
(\delta_{jl}\delta_{ki}-\sum_{j'=1}^d \alpha_{jj'} C_{ij'kl})^2\big)
 \\& \big(\sum_{i,k,l=1}^d
(\partial_i
\partial_k (w_1^l-w_2^l))^2 \big).
\end{split}
\end{equation}
Hence, writing
\begin{equation}
\beta_{\alpha,C}:=\sum_{i,j,k,l=1}^d
(\delta_{jl}\delta_{ki}-\sum_{j'=1}^d \alpha_{jj'} C_{ij'kl})^2,
\end{equation}
we obtain that
\begin{equation}
\begin{split}
\big\|T w_1-T w_2\big\|_{(H^2\cap H^1_0(\Omega))^d}^2\leq
\esssup_{x\in \Omega}\beta_{\alpha,C}(x) \big\|w_1-
w_2\big\|_{(H^2\cap H^1_0(\Omega))^d}^2.
\end{split}
\end{equation}
Observe that
\begin{equation}
\beta_{\alpha,C}:= d^2 - 2 \sum_{j',j,k=1}^d  \alpha_{jj'}
C_{kj'kj}+ \sum_{i,j,k,l=1}^d (\sum_{j'=1}^d \alpha_{jj'}
C_{ij'kl})^2.
\end{equation}
Taking variations with respect to $\alpha$, one must have, at the
minimum, that for all $j,m$,
\begin{equation}
 \sum_{i,k,l=1}^d C_{imkl}(\sum_{j'=1}^d \alpha_{jj'} C_{ij'kl})=  \sum_{k=1}^d
C_{kmkj}.
\end{equation}
Hence,
\begin{equation}\label{kwehwjhe}
 \sum_{j'=1}^d \alpha_{jj'} \sum_{i,k,l=1}^d C_{imkl} C_{ij'kl}=  \sum_{k=1}^d
C_{kmkj}.
\end{equation}
Let $B$ be the matrix defined by $B_{jm}=\sum_{k=1}^d C_{kmkj}$. Let
$A$ be the matrix defined by $A_{j'm}=\sum_{i,k,l=1}^d C_{imkl}
C_{ij'kl}$. Then \eref{kwehwjhe} can be written as
\begin{equation}
\alpha A=B,
\end{equation}
which leads to
\begin{equation}
\alpha^* =B A^{-1}.
\end{equation}
For such a choice, one has
\begin{equation}
\sum_{i,j,k,l=1}^d (\sum_{j'=1}^d \alpha_{jj'}^*
C_{ij'kl})^2=\sum_{j,m,k=1}^d  \alpha_{jm}^* C_{kmkj}.
\end{equation}
Hence, at the minimum, $\beta_{\alpha,C}=\beta_C$ with
\begin{equation}
\beta_{C}:= d^2 -\Tr[BA^{-1}B^T].
\end{equation}
For that specific choice of $\alpha$, if $\beta_C<1$, then $T$ is a
contraction and we obtain the existence and solution of
\eref{kljdlkwjewlkjer} through the fixed point theorem. Moreover,

\begin{equation}
\|\Delta u\|_{(L^2(\Omega))^d}\leq \|\alpha^*
f\|_{L^2(\Omega)}+\beta_C^\frac{1}{2} \|\Delta u\|_{(L^2(\Omega))^d},
\end{equation}
which concludes the proof.

\end{proof}

As a direct consequence of theorem \ref{Prewkwop6s} and theorem
\ref{kahlkguelwhu}, we obtain the following theorem.
\begin{Theorem}\label{swsmdblbhwsjhw3o}
Let $C$ be a divergence free bounded, uniformly elliptic, fourth
order tensor. Assume $\Omega$ is convex with a $C^2$-boundary. If
$\beta_C$, defined by \eref{shjdghshkwjw}, is strictly bounded from
above by one, then the inequalities of theorem \ref{Prewkwop6s}
and theorem \ref{kshksededsheie} are satisfied.
\end{Theorem}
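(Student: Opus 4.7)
The plan is to chain together the two preceding results in a straightforward way: Theorem \ref{kahlkguelwhu} supplies an $H^2$ estimate for solutions of $\mathcal{L}u=f$ under the Cordes condition $\beta_C<1$, and Theorem \ref{Prewkwop6s} asserts that inequality \eqref{ksssseseaedsessedfdL} (i.e.\ $\|\Delta u\|_{(L^2(\Omega))^d}\le \gamma\|\Hess:(u.C)\|_{(L^2(\Omega))^d}$) implies all the equivalent statements of Theorem \ref{kshksededsheie}. So it suffices to derive \eqref{ksssseseaedsessedfdL} from the conclusion of Theorem \ref{kahlkguelwhu}.

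The bridge between the two is the identity, valid whenever $C$ is divergence-free (and has the usual symmetries), that
\begin{equation}
\mathcal{L}u \;=\; \diiv\bigl(C:\varepsilon(u)\bigr) \;=\; \Hess:(u.C),
\end{equation}
which is the tensorial analogue of the scalar identity $\diiv(a\nabla u)=\Hess:(au)$ used in the proof of Theorem \ref{kshkheie}. It follows component-wise by expanding $\partial_i\partial_k(u_l C_{ijkl})$ and discarding the terms $\sum_i\partial_i C_{ijkl}=0$ and $\sum_k\partial_k C_{ijkl}=0$ that vanish because each $d\times d$ slice of $C$ is divergence-free. Once this identity is in hand, I would invoke Theorem \ref{kahlkguelwhu}: under the hypotheses ($\Omega$ convex with $C^2$ boundary and $\beta_C<1$), any $u\in (H^1_0(\Omega)\cap H^2(\Omega))^d$ satisfies
\begin{equation}
\|u\|_{(H^2\cap H^1_0(\Omega))^d}\;\le\; K\,\|\mathcal{L}u\|_{(L^2(\Omega))^d}
\;=\; K\,\|\Hess:(u.C)\|_{(L^2(\Omega))^d}.
\end{equation}
Since $\|\Delta u\|_{(L^2(\Omega))^d}\le \|u\|_{(H^2\cap H^1_0(\Omega))^d}$ by definition of the norm, this immediately yields \eqref{ksssseseaedsessedfdL} with $\gamma=K$.

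With \eqref{ksssseseaedsessedfdL} established, Theorem \ref{Prewkwop6s} gives the inequality \eqref{kssjsskesekwkedied} of Theorem \ref{kshksededsheie}, and the chain of equivalences in Theorem \ref{kshksededsheie} supplies all the remaining statements with the same constant. The only nontrivial step in the plan is verifying the divergence-free identity $\mathcal{L}u=\Hess:(u.C)$; everything else is a direct citation. I would expect the main subtlety there to be keeping the index conventions (and the placement of the symmetric gradient $\varepsilon(u)$ versus the full gradient $\nabla u$) consistent with the symmetries $C_{ijkl}=C_{jikl}=C_{klij}$ of the elasticity tensor, so that the equality of $\diiv(C:\varepsilon(u))$ and $\diiv(C:\nabla u)$ used implicitly in the computation is justified.
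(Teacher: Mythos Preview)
Your proposal is correct and follows essentially the same approach as the paper, which simply states that the result is a direct consequence of Theorems \ref{Prewkwop6s} and \ref{kahlkguelwhu}. You have spelled out the connecting identity $\mathcal{L}u=\Hess:(u.C)$ for divergence-free $C$ (the tensorial analogue of the scalar computation in the proof of Theorem \ref{kshkheie}), which is exactly the bridge the paper leaves implicit.
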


\section{Application of the flux-norm to theoretical non-conforming Galerkin.}\label{lkjlsdhkj}
The change of coordinates used in \cite{MR2292954} (see also subsection \ref{reminder}) to obtain error estimates for finite element solutions of scalar equation \eref{scalarproblem0} in two-dimensions admits no straightforward  generalization for  vectorial elasticity equations. In this section, we show how the flux-norm can be used to obtain error estimates for theoretical discontinuous Galerkin solutions of \eref{scalarproblem0} and \eref{elasticity}. These estimates are based on the inequalities introduced in section \ref{inequalities} and the control of the non-conforming error associated with the theoretical discontinuous Galerkin method. The control of the non-conforming error could be implemented by methods such as the penalization method. Its analysis is, however, difficult in general and will not be done here.
In the scalar case, we refer to \cite{OwhZha07errata} for the control of the non-conforming error.

\subsection{Scalar equations} \label{scalarequations}
Let $w\in H^2\cap H^1_0(\Omega)$  such that $-\Delta w=f$. Let $u$ be the solution in $H^1_0(\Omega)$ of
\begin{equation}\label{scalarpsdsdssdedm0}
    \int_\Omega (\nabla \varphi)^T a   \nabla u =\int_{\Omega}(\nabla \varphi)^T \nabla w \quad \varphi \in H^1_0(\Omega)
\end{equation}

Let $\mathcal{V}$ be a finite dimensional linear subspace of
$(L^2(\Omega))^d$.

We write  $\zeta_{\V}$ an approximation of the gradient of the solution of
\eref{scalarproblem0} in $\V$ obtained by solving \eref{klwkjhelkjhde3}--i.e.,
$\zeta_{\V}$ is defined such that for all $\eta \in \V$,
\begin{equation}\label{klwkjhelkjhde3}
\int_{\Omega} \eta^T a \zeta_{\V}=\int_{\Omega}\eta^T \nabla w.
\end{equation}

For $\xi\in
(L^2(\Omega))^d$, denote by $\xi=\xi_{curl}+\xi_{pot}$  the Weyl-Helmholtz decomposition of $\xi$ (see Definition \ref{defWeyl}).
\begin{Definition}
Write
\begin{equation}\label{lkjedswewhhh3}
\mathcal{K}_\V:=\sup_{\zeta \in \V}
\frac{\|\zeta_{curl}\|_{(L^2(\Omega))^d}}{\|\zeta\|_{(L^2(\Omega))^d}}.
\end{equation}
\end{Definition}
$\mathcal{K}_\V$ is related to the ``non-conforming error'' associated with
$\V$ (see for instance \cite{BrSc02} chapter 10).
If $\mathcal{K}_\V >0$ then the space $\V$ must contain functions that are not exact gradients. Moreover, it determines the ``distance'' between $\V$ and $L^2_{pot}$ (see definition \ref{defWeyl}).

\begin{Definition}
Write
\begin{equation}\label{lkjedswddeewhhh3}
\mathcal{D}_{\V}:=\inf_{a',\V'\,:\, \diiv(a'\V')=\diiv(a\V)}\sup_{w'\in H^2(\Omega)\cap H^1_0(\Omega)} \inf_{\zeta' \in \V'} \frac{\big\|(a'(\nabla u'-\zeta'))_{pot}\big\|_{(L^2(\Omega))^d}}{\|\Delta w'\|_{L^2(\Omega)}}
\end{equation}
\end{Definition}
The first minimum in  \eref{lkjedswddeewhhh3} is taken with respect to all finite dimensional linear subspaces  $\V'$ of
$(L^2(\Omega))^d$, and all bounded uniformly elliptic matrices $a'$ ($a_{ij}'\in L^\infty(\Omega)$) such that $\diiv(a'\V')=\diiv(a\V)$. Furthermore, $u'$ in \eref{lkjedswddeewhhh3} is defined as the (weak) solution of $\diiv(a'\nabla u')=\Delta w'$ with Dirichlet boundary condition on $\partial \Omega$.
Due to Theorem \ref{sdjhskjdhskdhkhje}, the $\inf_{a',\V'\,:\, \diiv(a'\V')=\diiv(a\V)}$ can be dropped.  However, we keep it to emphasize the independence of  the choice of $\V'$ and $a'$ as long as they satisfy $\diiv(a'\V')=\diiv(a\V)$.

\begin{Theorem}\label{kjwhkejhwejd}
There exists a constant $C^*>0$ depending only on $\lambda_{\min}(a)$ and $\lambda_{\max}(a)$ such that for $\mathcal{K}_\V\leq C^*$,
\begin{equation}\label{jakhgsjahsd}
\begin{split}
\|\nabla u - \zeta_{\V}\|_{(L^2(\Omega))^d}\leq C \| f \|_{L^2(\Omega)} \big(\mathcal{D}_{\V}+\mathcal{K}_\V\big)
\end{split}
\end{equation}
where $u$ is the solution of \eref{scalarpsdsdssdedm0}, $\zeta_\V$ the solution of \eref{klwkjhelkjhde3} and
 $C$ is a constant depending only on $\lambda_{\min}(a)$ and $\lambda_{\max}(a)$.
\end{Theorem}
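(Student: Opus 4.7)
The plan is to prove the estimate via a Strang-type (non-conforming Galerkin) argument that combines a modified Galerkin orthogonality for $\zeta_\V$ with the transfer property for fluxes from Theorem \ref{sdjhskjdhskdhsdsdsdsxkhje}. Throughout, $\|\cdot\|$ denotes the $(L^2(\Omega))^d$ norm.

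First I would derive a modified Galerkin orthogonality. Reading the potential part of \eref{scalarpsdsdssdedm0} gives $(a\nabla u)_{pot} = \nabla w$. For $\eta \in \V$, the component $\eta_{pot}$ is the gradient of some $\varphi \in H^1_0(\Omega)$; testing \eref{scalarpsdsdssdedm0} with this $\varphi$, using the $L^2$-orthogonality $\eta_{curl} \perp \nabla w$, and invoking the definition \eref{klwkjhelkjhde3} of $\zeta_\V$ yields
\begin{equation}
\int_\Omega \eta^T a (\zeta_\V - \nabla u) = -\int_\Omega \eta_{curl}^T (a\nabla u)_{curl}, \qquad \eta \in \V.
\end{equation}
Taking $\eta = \zeta_\V - \zeta$ for arbitrary $\zeta \in \V$, expanding $\int (\zeta_\V-\zeta)^T a (\zeta_\V-\zeta)$ as $\int (\zeta_\V-\zeta)^T a(\zeta_\V-\nabla u) + \int (\zeta_\V-\zeta)^T a(\nabla u-\zeta)$, and combining ellipticity of $a$, Cauchy--Schwarz, the bound $\|(\zeta_\V-\zeta)_{curl}\| \leq \mathcal{K}_\V \|\zeta_\V-\zeta\|$, and the a priori estimate $\|\nabla u\| \leq C(\Omega,\lambda_{\min}(a)) \|f\|_{L^2(\Omega)}$ produces, after a triangle inequality, the Strang-type bound
\begin{equation}\label{stranglocal}
\|\nabla u - \zeta_\V\| \leq C_1 \inf_{\zeta \in \V} \|\nabla u - \zeta\| + C_2\, \mathcal{K}_\V\, \|f\|_{L^2(\Omega)},
\end{equation}
with $C_1, C_2$ depending only on $\lambda_{\min}(a), \lambda_{\max}(a)$ and $\Omega$.

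Next I would translate the $L^2$ best-approximation error on the right of \eref{stranglocal} into the flux-norm-type quantity controlled by $\mathcal{D}_\V$. Given $\zeta \in \V$, write $\zeta = \zeta_{pot} + \zeta_{curl}$ and pick $\phi \in H^1_0(\Omega)$ with $\nabla \phi = \zeta_{pot}$. Proposition \ref{Prop0} bounds $\|\nabla(u-\phi)\| \leq \lambda_{\min}(a)^{-1} \|(a\nabla(u-\phi))_{pot}\|$. Combining the identity $(a\nabla(u-\phi))_{pot} = (a(\nabla u - \zeta))_{pot} + (a\zeta_{curl})_{pot}$ with the bounds $\|(a\zeta_{curl})_{pot}\| \leq \lambda_{\max}(a)\mathcal{K}_\V \|\zeta\|$, $\|\zeta\| \leq \|\nabla u\| + \|\nabla u - \zeta\|$, and the triangle inequality $\|\nabla u - \zeta\| \leq \|\nabla(u-\phi)\| + \|\zeta_{curl}\|$, then absorbing a term $C_3 \mathcal{K}_\V \|\nabla u - \zeta\|$ to the left-hand side, gives
\begin{equation}\label{converterlocal}
(1 - C_3 \mathcal{K}_\V) \|\nabla u - \zeta\| \leq C_4 \|(a(\nabla u - \zeta))_{pot}\| + C_5\, \mathcal{K}_\V\, \|f\|_{L^2(\Omega)},
\end{equation}
with $C_3, C_4, C_5$ depending only on $\lambda_{\min}(a), \lambda_{\max}(a)$ and $\Omega$.

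To close, set $C^* := 1/(2 C_3)$; for $\mathcal{K}_\V \leq C^*$ the prefactor $(1 - C_3 \mathcal{K}_\V)$ in \eref{converterlocal} is bounded below by $1/2$. Let $\zeta^* \in \V$ be a minimizer of $\|(a(\nabla u - \zeta))_{pot}\|$ over $\zeta \in \V$; by Theorem \ref{sdjhskjdhskdhsdsdsdsxkhje} the $\inf_{a',\V'}$ in the definition of $\mathcal{D}_\V$ is redundant, hence $\|(a(\nabla u - \zeta^*))_{pot}\| \leq \mathcal{D}_\V \|f\|_{L^2(\Omega)}$. Plugging $\zeta^*$ into \eref{converterlocal} gives $\|\nabla u - \zeta^*\| \leq C \|f\|_{L^2(\Omega)} (\mathcal{D}_\V + \mathcal{K}_\V)$, and inserting this into \eref{stranglocal} produces the claimed bound. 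The hard part is \eref{converterlocal}: elements of $\V$ need not be gradients, so $\|\nabla u - \zeta\|$ is not equivalent to the flux-norm-type quantity $\|(a(\nabla u - \zeta))_{pot}\|$ in the way Proposition \ref{Prop0} would give if one had $\V \subset L^2_{pot}$. The gap is measured exactly by $\mathcal{K}_\V$ and is responsible both for the smallness hypothesis $\mathcal{K}_\V \leq C^*$ and for the additive $\mathcal{K}_\V \|f\|_{L^2(\Omega)}$ contribution in the final estimate.
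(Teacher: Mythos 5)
Your proof is correct and follows essentially the same route as the paper's: a Strang-type non-conforming estimate, an elliptic-equivalence step converting the $L^2$ best-approximation error into the flux-norm quantity $\|(a(\nabla u-\zeta))_{pot}\|$ with an extra $\mathcal{K}_\V\|f\|$ term, and finally the transfer property (Theorem \ref{sdjhskjdhskdhsdsdsdsxkhje}) to invoke $\mathcal{D}_\V$. The only material difference is that you re-derive the paper's auxiliary Lemma \ref{kkhkjhkjhu} and the abstract Strang inequality (Lemma \ref{kkwekjhekjhe}) explicitly from the modified Galerkin orthogonality, which in particular makes the bound on the non-conforming error by $\lambda_{\max}(a)\mathcal{K}_\V\|\nabla u\|$ fully explicit where the paper only asserts it.
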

\begin{Remark}
Theorem \ref{kjwhkejhwejd} is in essence stating that the approximation error associated with $\V$ and the operator $\diiv(a\nabla)$ is proportional to  $\mathcal{D}_\V$ and $\mathcal{K}_\V$. $\mathcal{K}_\V$ is related to the non-conforming error associated to $\V$. $\mathcal{D}_\V$ is the minimum  (over $a'$, $\V'$ such that $\diiv(a'\V')=\diiv(a\V)$) approximation error associated to $\V'$ and the operator $\diiv(a'\nabla)$. Hence, $\mathcal{D}_{\V}$ and the transfer property allow us to equate the  accuracy of a scheme associated with  $\V'$ and a conductivity $a'$ to the accuracy of the scheme associated with  $\V$ and the conductivity $a$ provided that $\diiv(a'\V')=\diiv(a\V)$.
\end{Remark}
\begin{Remark}
In fact, it is possible to deduce from theorem  \ref{kkhkjhkjhu} that the maximum approximation error associated to   $\V$ and the operator $\diiv(a\nabla)$ can be bounded from below by a multiple of $\big(\mathcal{D}_{\V}+\mathcal{K}_\V\big)$ (see also equation (10.1.6) of \cite{BrSc02}).
\end{Remark}

\begin{Remark}
If the elements of $\V$ are of the form  $\eta=\sum_{\tau \in \Omega_h} 1_{(x\in \tau)} \nabla v$ where $v$ belongs to a linear space of functions with discontinuities at the boundaries of the simplices of $\Omega_h$ then
 we can replace  the right hand side of \eref{klwkjhelkjhde3} by $-\int_\Omega v \Delta w$ (see subsection 1.3 of \cite{MR2292954}). This modification doesn't affect the validity of \eref{jakhgsjahsd} since the difference between the two terms remains controlled by $\mathcal{D}_\V$.
 For  clarity of  presentation, we have used the formulation \eref{klwkjhelkjhde3}.
\end{Remark}

In order to prove theorem \ref{kjwhkejhwejd}, we will need the following lemma
\begin{Lemma}\label{kkhkjhkjhu}
There exists $C$ depending only on $\lambda_{\min}(a),\lambda_{\max}(a)$ such that for $u\in H^1_0(\Omega)$ and $\zeta \in (L^2(\Omega))^d$
\begin{equation}\label{dksjhldskhd}
\|\nabla u -\zeta\|_{(L^2(\Omega))^d}\leq C \Big(\big\|(a(\nabla u-\zeta))_{pot}\big\|_{(L^2(\Omega))^d}+\|\zeta_{curl}\|_{(L^2(\Omega))^d}\Big)
\end{equation}
\begin{equation}\label{jhdsdghsg}
\|\nabla u -\zeta\|_{(L^2(\Omega))^d}\geq \frac{1}{C} \Big(\big\|(a(\nabla u-\zeta))_{pot}\big\|_{(L^2(\Omega))^d}+\|\zeta_{curl}\|_{(L^2(\Omega))^d}\Big)
\end{equation}
\end{Lemma}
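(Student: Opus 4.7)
The plan is to decompose $\nabla u-\zeta$ via the Weyl--Helmholtz splitting and then reduce everything to Proposition \ref{Prop0}. Since $u\in H^1_0(\Omega)$, we have $\nabla u\in L^2_{pot}(\Omega)$, so if we write $g:=\nabla u-\zeta_{pot}$, then $g\in L^2_{pot}(\Omega)$ as well and there exists $h\in H^1_0(\Omega)$ with $g=\nabla h$. Since $L^2_{pot}$ and $L^2_{curl}$ are $L^2$-orthogonal, Pythagoras gives
\begin{equation}
\|\nabla u-\zeta\|_{(L^2(\Omega))^d}^{2}=\|g\|_{(L^2(\Omega))^d}^{2}+\|\zeta_{curl}\|_{(L^2(\Omega))^d}^{2}.
\end{equation}
In particular $\|\nabla u-\zeta\|_{(L^2(\Omega))^d}\ge \|\zeta_{curl}\|_{(L^2(\Omega))^d}$, which handles the $\|\zeta_{curl}\|$ piece in both bounds.

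For the upper bound \eqref{dksjhldskhd}, the key identity is
\begin{equation}
\bigl(a(\nabla u-\zeta)\bigr)_{pot}=(a g)_{pot}-(a\zeta_{curl})_{pot}=(a\nabla h)_{pot}-(a\zeta_{curl})_{pot}.
\end{equation}
Then I apply Proposition \ref{Prop0} to $h$ to get $\|(a\nabla h)_{pot}\|_{(L^2(\Omega))^d}=\|h\|_{a\f}\ge \lambda_{\min}(a)\|\nabla h\|_{(L^2(\Omega))^d}=\lambda_{\min}(a)\|g\|_{(L^2(\Omega))^d}$, and use that the Weyl--Helmholtz projection is a contraction to get $\|(a\zeta_{curl})_{pot}\|_{(L^2(\Omega))^d}\le \|a\zeta_{curl}\|_{(L^2(\Omega))^d}\le \lambda_{\max}(a)\|\zeta_{curl}\|_{(L^2(\Omega))^d}$. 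Combined via the triangle inequality this yields
\begin{equation}
\|g\|_{(L^2(\Omega))^d}\le \frac{1}{\lambda_{\min}(a)}\bigl\|(a(\nabla u-\zeta))_{pot}\bigr\|_{(L^2(\Omega))^d}+\frac{\lambda_{\max}(a)}{\lambda_{\min}(a)}\|\zeta_{curl}\|_{(L^2(\Omega))^d},
\end{equation}
and plugging this into $\|\nabla u-\zeta\|\le \|g\|+\|\zeta_{curl}\|$ delivers \eqref{dksjhldskhd} with $C$ depending only on $\lambda_{\min}(a)$ and $\lambda_{\max}(a)$.

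For the lower bound \eqref{jhdsdghsg}, I use the same contraction property the other way around: $\|(a(\nabla u-\zeta))_{pot}\|_{(L^2(\Omega))^d}\le \|a(\nabla u-\zeta)\|_{(L^2(\Omega))^d}\le \lambda_{\max}(a)\|\nabla u-\zeta\|_{(L^2(\Omega))^d}$, and combine this with the already-noted inequality $\|\zeta_{curl}\|_{(L^2(\Omega))^d}\le \|\nabla u-\zeta\|_{(L^2(\Omega))^d}$. Adding the two estimates gives \eqref{jhdsdghsg} with $C=\lambda_{\max}(a)+1$ (or whatever constant is consistent with the upper bound).

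The only subtle point is the very first step, namely identifying that $\nabla u-\zeta_{pot}$ is itself the gradient of an $H^1_0$ function so that Proposition \ref{Prop0} is applicable; this uses the Poincaré inequality to transfer $L^2$-Cauchyness of gradients of $C^\infty_0$ approximants to $H^1_0$-Cauchyness of the potentials, which is standard. After that, everything reduces to the flux-norm equivalence \eqref{skkshg} and the orthogonality of the Weyl--Helmholtz decomposition, so there is no substantial obstacle.
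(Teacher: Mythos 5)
Your proof is correct and is essentially the same as the paper's: both hinge on the Weyl--Helmholtz orthogonality, the ellipticity of $a$ via the Cauchy--Schwarz trick, and the fact that projection onto $L^2_{pot}$ is a contraction. The only cosmetic difference is that you identify $\nabla u-\zeta_{pot}$ as $\nabla h$ for some $h\in H^1_0$ and invoke Proposition~\ref{Prop0}, whereas the paper applies the same Cauchy--Schwarz/ellipticity argument directly to $\nabla u - \zeta$ (which need not be a gradient), and for the lower bound you use the contraction property of the projection once on $a(\nabla u-\zeta)$ while the paper splits into $a(\nabla u-\zeta_{pot})$ and $a\zeta_{curl}$ first; neither variation changes the substance.
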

\begin{proof}
For the proof of \eref{jhdsdghsg}, observe that $$\|\nabla u -\zeta\|_{(L^2(\Omega))^d}=\|\nabla u -\zeta_{pot}\|_{(L^2(\Omega))^d}+\|\zeta_{curl}\|_{(L^2(\Omega))^d}.$$
Furthermore,
\begin{equation}\label{jhdsdghsddg}
\begin{split}
\big\|(a(\nabla u-\zeta))_{pot}\big\|_{(L^2(\Omega))^d}\leq& \big\|(a(\nabla u-\zeta_{pot}))_{pot}\big\|_{(L^2(\Omega))^d}+\big\|(a\zeta_{curl})_{pot}\big\|_{(L^2(\Omega))^d}\\
&\leq \lambda_{\max}(a) \big\|\nabla u-\zeta_{pot}\big\|_{(L^2(\Omega))^d}+ \lambda_{\max}(a) \big\|\zeta_{curl}\big\|_{(L^2(\Omega))^d}
\end{split}
\end{equation}
For \eref{dksjhldskhd}, observe that
\begin{equation}
\int_{\Omega} (\nabla u-\zeta)^T a (\nabla u-\zeta)=
\int_{\Omega} (\nabla u-\zeta_{pot})^T \big(a (\nabla u-\zeta)\big)_{pot}+
\int_{\Omega} \zeta_{curl}^T a (\nabla u-\zeta)
\end{equation}
It follows from Cauchy-Schwarz  inequality  that
\begin{equation}
\begin{split}
\lambda_{\min}(a)\|\nabla u -\zeta\|_{(L^2(\Omega))^d}\leq &
\frac{\|\nabla u -\zeta_{pot}\|_{(L^2(\Omega))^d}}{\|\nabla u -\zeta\|_{(L^2(\Omega))^d}} \big\|(a(\nabla u-\zeta))_{pot}\big\|_{(L^2(\Omega))^d}\\&+\lambda_{max}(a)\|\zeta_{curl}\|_{(L^2(\Omega))^d}
\end{split}
\end{equation}
\end{proof}
We also need the following lemma, which corresponds to lemma (10.1.1) of \cite{BrSc02}
\begin{Lemma}\label{kkwekjhekjhe}
Let $H$ be  a Hilbert space, $V$ and $V_h$ be subspaces of $H$ ($V_h$ may not be a subset of $V$). Assume that $a(.,.)$ is continuous bilinear form
on $H$ which is coercive on $V_h$, with respective continuity and coercivity constants $C$ and $\gamma$. Let $u\in V$ solve
\begin{equation}
a(u,v)=F(v)\quad \, \forall v\in V
\end{equation}
where $F\in H'$ ($H'$ is the dual of $H$). Let $u_h\in V_h$ solve
\begin{equation}
a(u_h,v)=F(v)\quad \, \forall v\in V_h
\end{equation}
Then
\begin{equation}\label{kskdjkdhkhjed}
\|u-u_h\|_H \leq \big(1+\frac{C}{\gamma}\big) \inf_{w\in V_h}\|u-w\|_{H}+\frac{1}{\gamma}\sup_{w\in V_h\setminus \{0\}}\frac{a(u-u_h,w)}{\|w\|_H}
\end{equation}

\end{Lemma}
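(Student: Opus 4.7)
The plan is to follow the classical second Strang lemma argument: decompose the error through an arbitrary element of $V_h$, apply coercivity on $V_h$ to control the piece that lives in $V_h$, and then replace a Galerkin orthogonality term (which fails here because $V_h \not\subset V$) by the consistency supremum appearing in the right-hand side of \eref{kskdjkdhkhjed}.

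Concretely, I would fix an arbitrary $w\in V_h$ and write
\begin{equation}
u - u_h = (u-w) + (w - u_h),
\end{equation}
so that $\|u-u_h\|_H \le \|u-w\|_H + \|w-u_h\|_H$ by the triangle inequality. Since $w-u_h\in V_h$, coercivity gives $\gamma\|w-u_h\|_H^2 \le a(w-u_h,w-u_h)$. I would then split
\begin{equation}
a(w-u_h,w-u_h) = a(w-u,w-u_h) + a(u-u_h,w-u_h),
\end{equation}
bound the first term by continuity as $C\|u-w\|_H\|w-u_h\|_H$, and bound the second by $\|w-u_h\|_H \cdot \sup_{z\in V_h\setminus\{0\}} a(u-u_h,z)/\|z\|_H$. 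Dividing by $\|w-u_h\|_H$ yields
\begin{equation}
\|w-u_h\|_H \le \frac{C}{\gamma}\|u-w\|_H + \frac{1}{\gamma}\sup_{z\in V_h\setminus\{0\}}\frac{a(u-u_h,z)}{\|z\|_H}.
\end{equation}
Plugging back into the triangle inequality and taking the infimum over $w\in V_h$ gives \eref{kskdjkdhkhjed}.

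There is no real obstacle here: the argument is structural and uses only the hypotheses (continuity of $a$ on $H$, coercivity on $V_h$, and the two variational equations). The one place requiring mild care is that Galerkin orthogonality $a(u-u_h,v)=0$ for $v\in V_h$ is \emph{not} available, because the test functions in the continuous problem lie in $V$ while those in the discrete problem lie in $V_h$, and $V_h\not\subset V$. It is precisely this failure that produces the consistency supremum on the right-hand side; one simply keeps the term $a(u-u_h,w-u_h)$ rather than discarding it. Note also that coercivity is only required on $V_h$ (not on all of $H$), which is exactly what the argument uses, since $w-u_h\in V_h$.
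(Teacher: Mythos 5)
Your proof is correct and is the standard argument (it is essentially the proof of Lemma~10.1.1 in Brenner--Scott, which the paper cites without reproducing). The paper gives no proof of its own, so there is nothing to compare against; your treatment of the consistency term and the restriction of coercivity to $V_h$ are both handled exactly as one should, with the only unstated edge case ($w = u_h$, where the bound is trivial) being harmless.
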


We now proceed by proving theorem \ref{kjwhkejhwejd}.
\begin{proof}
Using lemma \ref{kkhkjhkjhu}, we obtain that
\begin{equation}
 \|\nabla u - \zeta\|_{(L^2(\Omega))^d}\leq C\Big(\big\|(a(\nabla u-\zeta))_{pot}\big\|_{(L^2(\Omega))^d}+\|\zeta\|_{(L^2(\Omega))^d} \mathcal{K}_\V\Big)
\end{equation}
Using  the triangle inequality $\|\zeta\|_{(L^2(\Omega))^d}\leq \|\nabla u-\zeta\|_{(L^2(\Omega))^d}+\|\nabla u\|_{(L^2(\Omega))^d}$,
we obtain that
\begin{equation}\label{jkshgshgdhe}
 \|\nabla u - \zeta\|_{(L^2(\Omega))^d}\leq \frac{C}{1-C\mathcal{K}_\V}\Big(\big\|(a(\nabla u-\zeta))_{pot}\big\|_{(L^2(\Omega))^d}+\|\nabla u\|_{(L^2(\Omega))^d} \mathcal{K}_\V\Big)
\end{equation}
from which we deduce that
\begin{equation}\label{sddejksdhgshgdhe}
\inf_{\zeta \in \V} \|\nabla u - \zeta\|_{(L^2(\Omega))^d}\leq \frac{C}{1-C\mathcal{K}_\V}\inf_{\zeta \in \V} \Big(\big\|(a(\nabla u-\zeta))_{pot}\big\|_{(L^2(\Omega))^d}+\|\nabla u\|_{(L^2(\Omega))^d} \mathcal{K}_\V\Big)
\end{equation}
($C^*$ in the statement of the theorem is chosen so that $\mathcal{K}_\V<C^*$ implies $C \mathcal{K}_\V<0.5$).
We obtain from lemma \ref{kkwekjhekjhe}  that (observe that the last term in equation \eref{kskdjkdhkhjed} is the non-conforming error and that it is bounded by $C\|\nabla u\|_{(L^2(\Omega))^d} \sup_{\zeta \in \V} \frac{\|\zeta_{curl}\|_{(L^2(\Omega))^d}}{\|\zeta\|_{(L^2(\Omega))^d}}$ for an appropriate constant $C$).
\begin{equation}\label{jsdhgsjdgjdgh}
\|\nabla u - \zeta \|_{(L^2(\Omega))^d}\leq C \Big(\inf_{\zeta \in \V} \|\nabla u - \zeta \|_{(L^2(\Omega))^d}+
\|\nabla u\|_{(L^2(\Omega))^d} \sup_{\zeta \in \V} \frac{\|\zeta_{curl}\|_{(L^2(\Omega))^d}}{\|\zeta\|_{(L^2(\Omega))^d}}\Big)
\end{equation}
Combining \eref{sddejksdhgshgdhe} with \eref{jsdhgsjdgjdgh}, we conclude using theorem \ref{sdjhskjdhskdhsdsdsdsxkhje} and the Poincar\'{e} inequality.
\end{proof}

Let us now show how theorem \ref{kjwhkejhwejd} can be combined with the new class of inequalities obtained in sub-section \ref{jhsdgkjgewh3} to obtain homogenization results for arbitrarily rough coefficients $a$. Let $M$ be a uniformly elliptic $d\times d$ matrix (observe that uniform ellipticity of $M$ implies its invertibility) and $\V'$ be  a finite dimensional linear subspace of $(L^2(\Omega))^d$. Define
\begin{equation}
\V:=\{M\zeta'\,:\, \zeta'\in \V'\}
\end{equation}
Assume furthermore that for all $w\in H^1_0\cap H^2(\Omega)$,
\begin{equation}\label{kjhekjwhe}
\inf_{\zeta' \in \V'} \|\nabla w-\zeta'\|_{(L^2(\Omega))}\leq C h \|\Delta w\|_{L^2(\Omega)}
\end{equation}
where $h$ is a small parameter (the resolution of the tessellation associated to $\V'$ for instance).
We remark here that $ \V'$ can  be viewed as the coarse scale $h$ approximation space (see example below).  The fine scale information from coefficients $a(x)$ is contained in the elements of the matrix $M$.  This is illustrated in the example below where $M=\nabla F$ for harmonic coordinates $F$. Therefore, the matrix  $M$ is determined by  $d$  harmonic coordinates that are analogues of
$d$ cell problems in periodic homogenization, and we call space $\V$  the ``minimal pre-computation space" since it requires minimal (namely $d$)  pre-computation of fine scales.

Then, we have the following theorem:

\begin{Theorem}\label{jkgkjghjg}  {\bf Approximation by ``minimal pre-computation space" }
If
\begin{itemize}
\item $a\cdot M$ is divergence free (as defined in sub-section \ref{jhsdgkjgewh3}).
\item The symmetric part of $a \cdot M$ satisfies the Cordes condition \eref{sshgdswsdsded7641} or the symmetric part of $a \cdot M$ satisfies
one of the inequalities of theorem \ref{Prop6s}.
\item The non-conforming error satisfies $\mathcal{K}_\V\leq C h^\alpha$ for some constant $C>0$ and $\alpha \in (0,1]$,
\end{itemize}
then
\begin{equation}\label{jakhgsjahhghsd}
\begin{split}
\|\nabla u - \zeta_{\V}\|_{(L^2(\Omega))^d}\leq C \|f\|_{L^2(\Omega)} h^\alpha
\end{split}
\end{equation}
where $u$ is the solution of \eref{scalarproblem0} and $\zeta_\V$ the solution of \eref{klwkjhelkjhde3},
\end{Theorem}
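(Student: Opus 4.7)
My plan is to combine Theorem \ref{kjwhkejhwejd} with the transfer property (Theorem \ref{sdjhskjdhskdhsdsdsdsxkhje}) and the regularity inequality of Theorem \ref{Prop6s}, using the fact that $aM$ plays the role of the divergence-free ``pullback'' matrix $Q$ in the harmonic-coordinate approach.

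First I would invoke Theorem \ref{kjwhkejhwejd}: for $h$ small enough the assumption $\mathcal{K}_\V\leq Ch^\alpha$ guarantees $\mathcal{K}_\V\leq C^*$, so we obtain
\begin{equation*}
\|\nabla u-\zeta_\V\|_{(L^2(\Omega))^d}\leq C\|f\|_{L^2(\Omega)}\big(\mathcal{D}_\V+\mathcal{K}_\V\big).
\end{equation*}
Since $\mathcal{K}_\V\leq Ch^\alpha$ by assumption, it remains to prove $\mathcal{D}_\V\leq Ch$, for then $\mathcal{D}_\V\leq Ch^\alpha$ (as $\alpha\in(0,1]$ and we may assume $h\leq 1$), and the desired bound $Ch^\alpha\|f\|_{L^2}$ follows.

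To bound $\mathcal{D}_\V$ I would exploit the freedom granted by the infimum in \eref{lkjedswddeewhhh3}: choose the competitor pair $(a',\V'')=(aM,\V')$, where $\V'$ is the original space satisfying $\V=M\V'$. This pair is admissible because $\diiv(aM\,\V')=\diiv(a\,\V)$. Given any $w'\in H^2(\Omega)\cap H^1_0(\Omega)$, let $u'\in H^1_0(\Omega)$ solve $\diiv(aM\nabla u')=\Delta w'$. Since $aM$ is divergence free, whenever $u'\in H^2$ we have the identity $aM:\Hess(u')=\diiv(aM\nabla u')=\Delta w'$; and only the symmetric part of $aM$ enters the left-hand side because $\Hess(u')$ is symmetric. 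By the assumed Cordes condition on the symmetric part of $aM$ (or directly by the assumed inequality of Theorem \ref{Prop6s}), we obtain $u'\in H^2(\Omega)\cap H^1_0(\Omega)$ together with the a priori estimate
\begin{equation*}
\|\Delta u'\|_{L^2(\Omega)}\leq C\,\|aM:\Hess(u')\|_{L^2(\Omega)}=C\,\|\Delta w'\|_{L^2(\Omega)}.
\end{equation*}

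For the numerator in $\mathcal{D}_\V$ I would use that the Weyl--Helmholtz projection $(\cdot)_{pot}$ has operator norm one and that $aM\in (L^\infty(\Omega))^{d\times d}$, so that for any $\zeta'\in\V'$
\begin{equation*}
\|(aM(\nabla u'-\zeta'))_{pot}\|_{(L^2(\Omega))^d}\leq \|aM\|_{L^\infty}\,\|\nabla u'-\zeta'\|_{(L^2(\Omega))^d}.
\end{equation*}
Applying the approximation assumption \eref{kjhekjwhe} with $w=u'$ and the regularity bound just obtained yields $\inf_{\zeta'\in\V'}\|\nabla u'-\zeta'\|_{(L^2(\Omega))^d}\leq Ch\|\Delta u'\|_{L^2}\leq Ch\|\Delta w'\|_{L^2}$. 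Taking the supremum over $w'$ gives $\mathcal{D}_\V\leq Ch$, which when combined with Step~1 yields \eref{jakhgsjahhghsd}.

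The main obstacle is the regularity step: the operator $\diiv(aM\nabla)$ has coefficients that are merely $L^\infty$ and not necessarily symmetric, so standard elliptic $H^2$ estimates do not apply. The Cordes-type hypothesis (or equivalently the conclusion of Theorem \ref{Prop6s}) is exactly what supplies this missing piece, and it is only at this stage that divergence-freeness of $aM$ is essential, since it lets us rewrite the divergence-form equation as the non-divergence-form identity $aM:\Hess(u')=\Delta w'$. Once this is in hand, the rest is a direct chaining of the transfer property, the $L^\infty$ bound on $aM$, and the coarse-space approximation assumption \eref{kjhekjwhe}.
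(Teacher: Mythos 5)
Your proof is correct and follows essentially the same route as the paper: apply Theorem \ref{kjwhkejhwejd}, specialize the infimum in the definition of $\mathcal{D}_\V$ to the admissible pair $(a',\V')=(aM,\V')$, obtain $H^2$ regularity for $u'$ from the divergence-free identity $aM:\Hess(u')=\diiv(aM\nabla u')=\Delta w'$ together with the Cordes/Theorem \ref{Prop6s} hypothesis, and then invoke the coarse-space approximation property \eref{kjhekjwhe}. The paper's own argument is the terse version of exactly this chain, and your additional remarks (norm-one Weyl--Helmholtz projection, $L^\infty$ bound on $aM$, the observation that only the symmetric part of $aM$ hits the symmetric Hessian) are the details the paper leaves implicit.
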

\begin{Remark}
The error estimate is given in the $L^2$ norm of $\nabla u - \zeta_{\V}$ because we wish to give a strong error estimate and if $\zeta_{\V}$ is not a gradient then the $L^2$ norm of
$(a(\nabla u - \zeta_{\V}))_{pot}$ is not equivalent to the $L^2$ norm $\nabla u - \zeta_{\V}$.
\end{Remark}
\begin{Remark}
It is, in fact, sufficient that the symmetric part of $a \cdot M$ satisfies
one of the inequalities of theorem  \ref{kshkheie} instead of \ref{Prop6s} for the validity of Theorem \ref{jkgkjghjg}. For the sake of clarity, we have used inequalities of theorem \ref{Prop6s}.
\end{Remark}
\begin{proof}
The proof is a direct consequence of theorem \ref{kjwhkejhwejd}; we simply need to bound $\mathcal{D}_{\V}$.  Since $\diiv (a\V)=\diiv(a\cdot M\V')$, it follows from equation
\ref{lkjedswddeewhhh3} that

\begin{equation}\label{lkjedswdadeewhccdhh3}
\mathcal{D}_{\V} \leq \sup_{w'\in H^2(\Omega)\cap H^1_0(\Omega)} \inf_{\zeta' \in \V'} \frac{\big\|(a\cdot M(\nabla u'-\zeta'))_{pot}\big\|_{(L^2(\Omega))^d}}{\|\Delta w'\|_{L^2(\Omega)}}
\end{equation}
where $u'$ in \eref{lkjedswddeewhhh3} is defined as the (weak) solution of $\diiv(a\cdot M\nabla u')=\Delta w'$ with Dirichlet boundary condition on $\partial \Omega$. Now, if symmetric part of $a \cdot M$ satisfies the Cordes condition \eref{sshgdswsdsded7641} or the symmetric part of $a \cdot M$ satisfies one of the inequalities of theorem \ref{Prop6s}, then $\|u'\|_{H^2}\leq C \|\Delta w'\|_{L^2}$ and we conclude using the approximation property \eref{kjhekjwhe}.

\end{proof}

 An example of $\V$ can be found in the discontinuous Galerkin method introduced in subsection 1.3 of \cite{MR2292954}. This method is also a generalization of the method II of \cite{MR1286212} to non-laminar media. In that method, we pre-compute
 the harmonic coordinates associated
with \eref{scalarproblem0}--i.e., the $d$-dimensional vector $F(x):=\big(F_1(x),\ldots,F_d(x)\big)$ where $F_i$ is a solution of
\begin{equation}\label{ksdkjskdh}
\begin{cases}
\diiv a \nabla F_i=0 \quad \text{in}\quad \Omega\\
F_i(x)=x_i \quad \text{on}\quad \partial \Omega.
\end{cases}
\end{equation}
Introducing $\Omega_h$, a regular tessellation of $\Omega$ of resolution $h$,
the elements of $\V$ are defined as $\nabla F (\nabla_c F)^{-1}\nabla \varphi$, where $\varphi$ is a piecewise linear function on $\Omega_h$
with Dirichlet boundary condition on $\Omega_h$ and $\nabla_c F$ is the gradient of the linear interpolation of $F$ over $\Omega_h$. In that example $a\cdot \nabla F$ is divergence-free and $\nabla F$ plays the role of $M$. The non-conforming error is controlled by the aspect ratios of the images of the triangles of $\Omega_h$ by $F$.
In  \cite{MR2292954}, the estimate \eref{jakhgsjahhghsd} is obtained using $F$ as a global change of coordinates that has no clear equivalent for tensorial equations, whereas the proof based on the flux-norm can be extended to tensorial equations.

\begin{figure}[h!]
\begin{center}
$
\begin{array}{cc}
  \psfig{figure=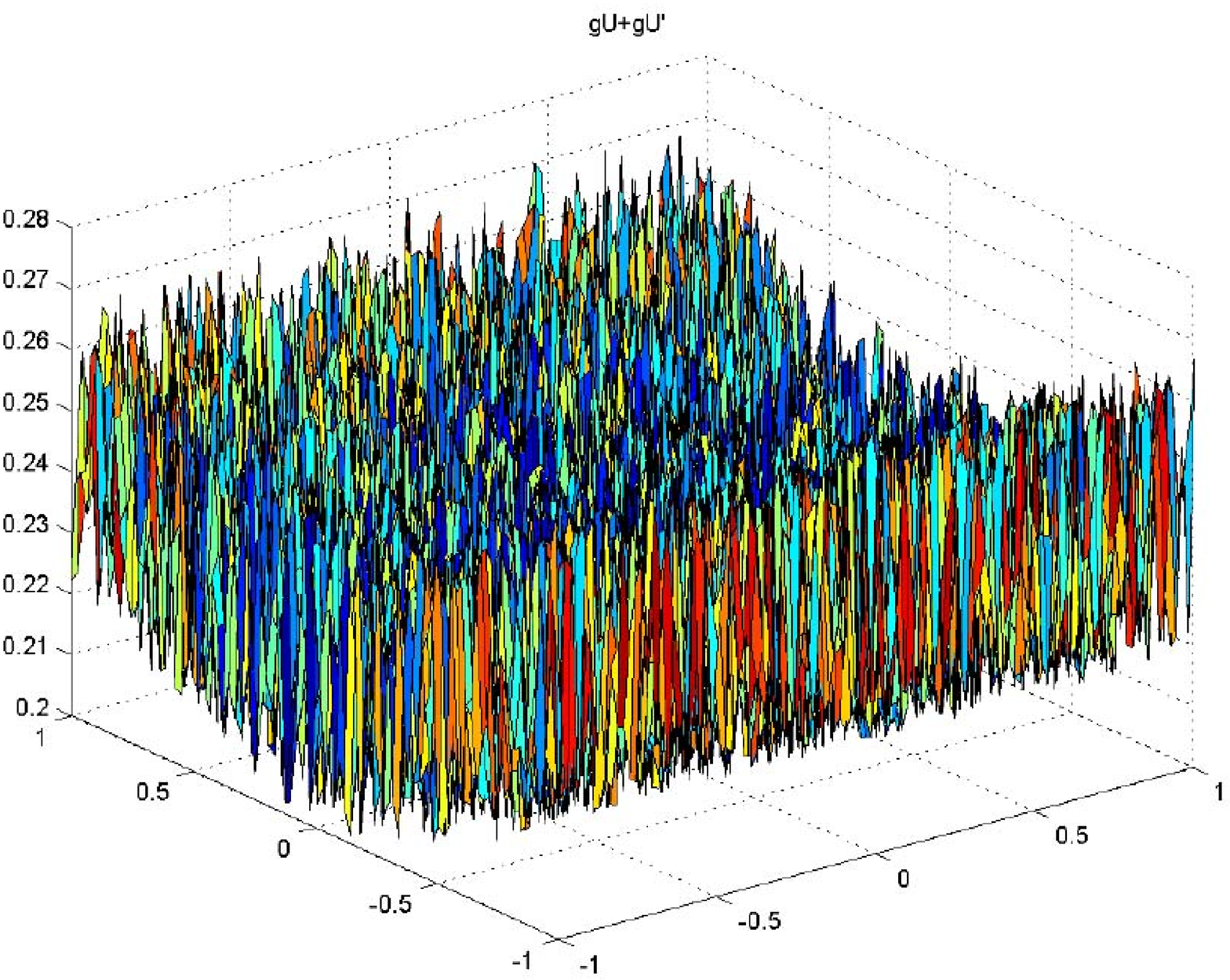,height=2.2in,silent=} & \psfig{figure=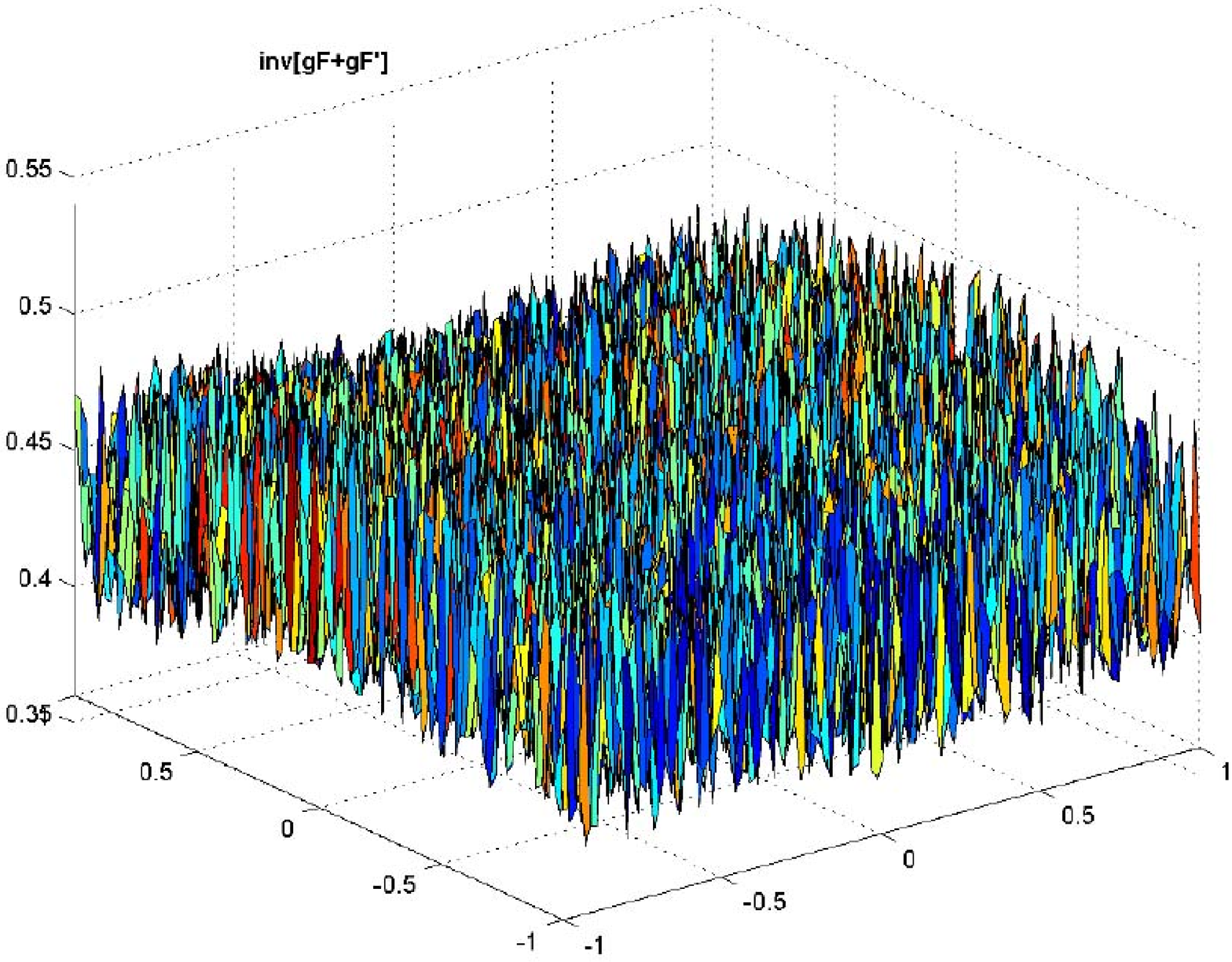,height=2.2in,silent=} \\
  \text{(a)} &
  \text{(b)} \\
  \psfig{figure=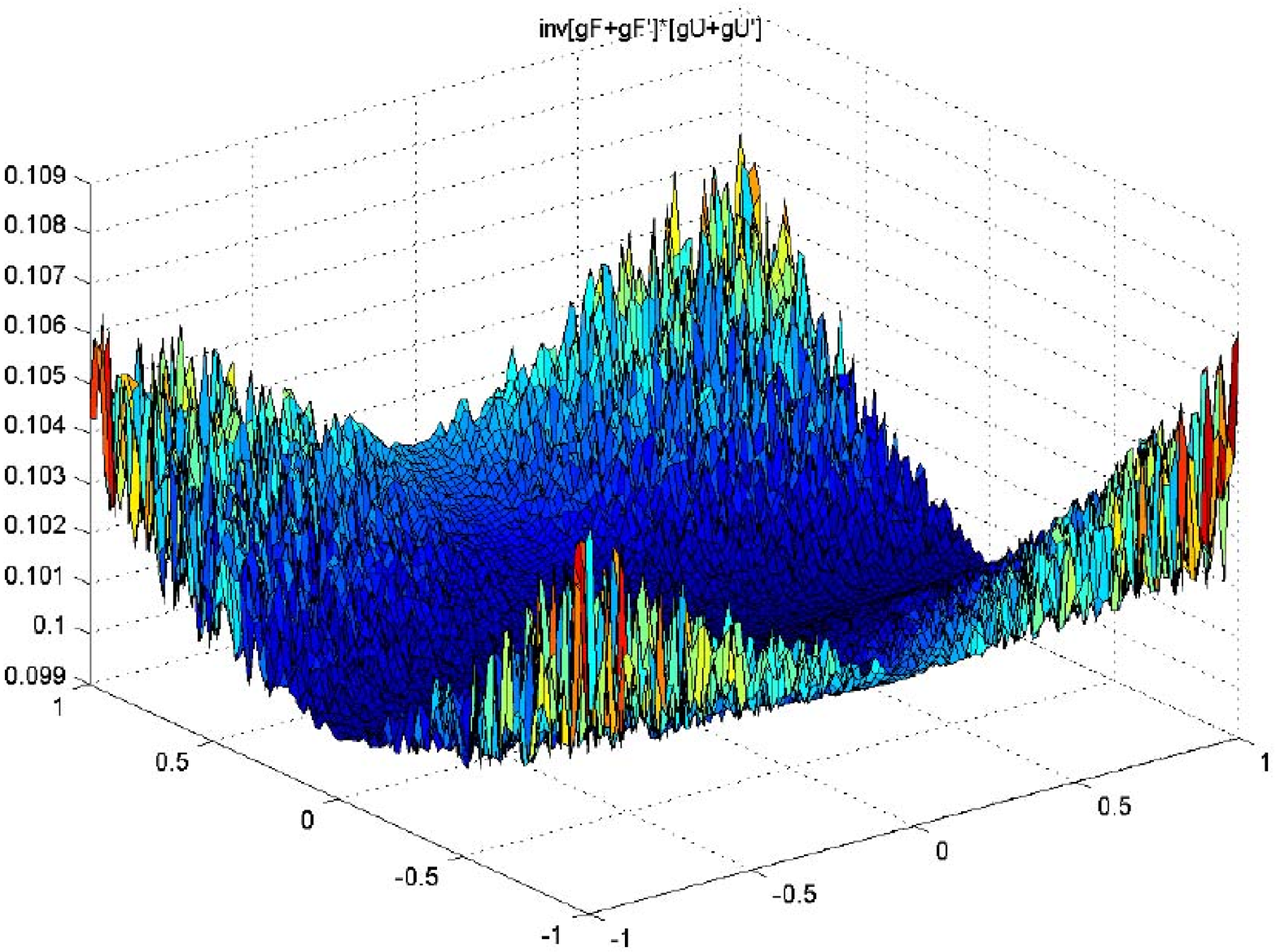,height=2.2in,silent=} & \\
  \text{(c)} &
\end{array}$
\end{center}
\caption{ \emph{Computation by Lei Zhang. The elasticity stiffness is obtained by choosing its coefficients to be random and oscillating over many overlapping scales.  Figure (a) and (b)  show
wild oscillations of one of the components of the strain tensor
$\nabla u + \nabla u^T$ ($u$ solves \eqref{elasticity}) and one of
the components of $(\nabla F + \nabla F^T)^{-1}$ ($F=\{F^{ij}\}$ is
defined by \eqref{elasticityh}).
Figure (c) illustrates one of the components of the product $(\nabla
F + \nabla F^T)^{-1}(\nabla u + \nabla u^T)$, which is smooth
if compared to (a) and (b). There is no smoothing near the boundary
due to sharp corners.}
 }\label{figupscaling}
\end{figure}

\subsection{Tensorial equations.}
The generalization of the results of this section to elasticity equations doesn't pose any difficulty. This generalization is simply based on theorem
\ref{sdjhskjddshskdhsdsdsdsxkhje} and the new class of inequalities introduced in subsection \ref{NewIneq2}. An example of numerical scheme can be found in \cite{OwDes09} for (non-linear) elasto-dynamics with rough elasticity coefficients. With elasticity equations harmonic coordinates are replaced by harmonic displacements, i.e. solutions of
\begin{equation}\label{elasticityh}
    \begin{cases}
    -\diiv (C(x) \nabla F^{kl})=0 \quad &  x \in \Omega\\
    F^{kl}=\frac{x_k e_l+x_l e_k}{2} \quad & \text{on } \partial \Omega.
    \end{cases}
\end{equation}
and strains $\varepsilon(u)$ are approximated by a finite dimension linear space $\V$ with elements of the form
$\varepsilon(F):(\varepsilon_c F)^{-1}(\varepsilon(\varphi))$ where the $\varphi$ are piecewise linear displacements on $\Omega_h$,
$\varepsilon_c F$ is the strain of the linear interpolation of $F$ over $\Omega_h$ and
$\varepsilon(F)$ denotes the $d\times d\times d\times d$ tensor with entries
\begin{equation}
\varepsilon(F)_{i,j,k,l}:=\frac{\partial_i F_j^{kl}+\partial_j
F_i^{kl}}{2}.
\end{equation}
Here, $C:\varepsilon(F)$ is divergence-free and plays the role of $M$; furthermore, the regularization property observed in the scalar case  \cite{MR2292954} is also observed in the tensorial case by taking the product $(\varepsilon(F))^{-1}\varepsilon(u)$ (figure \ref{figupscaling}).

\section{Relations with homogenization theory and other works.}\label{kjkgjkjgkjgjhg}

We first show  how our approach is related to  homogenization theory. To this end,  we
\begin{itemize}
\item Describe the notion of a thin subspace,  which is pivotal in our work and show that this notion was implicitly present in classical periodic homogenization.

\item Show the analogy between the basis functions in Theorems  \ref{jhdjkhsgdkjgeyhde} and \ref{Corol1}, harmonic coordinates \eqref{harmonic}, and solutions of the cell problems in periodic and random homogenization.

\item Explain relations between our work and general  abstract operator homogenization approaches.

\end{itemize}

 \paragraph{ The thin subspace notion.}  A key ingredient of the  proofs of the main approximation  Theorems  \ref{jhdjkhsgdkjgeyhde} and \ref{Corol1} is  the transfer property introduced in theorems \ref{sdjhskjdhskdhsdsdsdsxkhje} and \ref{sdjhskzzhdsdkhje}.  Roughly speaking, it shows how  a standard (``easy'') error estimate   in the  case of  smooth coefficients  provides an error estimate  in the  case of arbitrarily rough (``bad'')  coefficients due to an appropriate choice of the finite-dimensional approximation space.

 The transfer property in turn is based on the notion of a ``thin''   subspace whose essence can be explained as follows.  Let us consider the scalar divergence form elliptic problem \eref{scalarproblem0}. First, observe that as $f$ spans $H^{-1}(\Omega)$, $u$ spans $H^1_0(\Omega)$, i.e.
the operator $L^{-1}:=(-\diiv a \nabla)^{-1}$ defines a bijection from $H^{-1}(\Omega)$ onto $H^1_0(\Omega)$. Next, observe that as $f$ spans $L^2(\Omega)$, $u$ spans a subspace $V$ of $H^1_0(\Omega)$, i.e. $L^{-1}:=(-\diiv a \nabla)^{-1}$ defines a bijection from $L^2(\Omega)$ onto $V$.
How ``thin'' is that space compared to
 $H^1_0(\Omega)$? If $a=I_d$, then  $V=V':=H^1_0(\Omega)\cap H^2(\Omega)$,
i.e. a much ``thinner'' space than $H^1_0(\Omega)$, namely $V$ is ``as thin as $H^2$".

The proofs of Theorems \ref{jhdjkhsgdkjgeyhde} and \ref{Corol1} also use the transfer property  for
 finite-dimensional approximation spaces that depend of  a small parameter $h$. Namely, there exists a finite $O(|\Omega|/h^d)$ dimensional subspace $V_h'$ of $H^1_0(\Omega)$ such that all elements of $V'$ are in $H^1$-norm distance at most $h$ from $V_h'$  (an example of spaces $V_h'$ and $V'$ are  the spaces $\mathcal{L}_0^h$ (used in subsection \ref{khdkhdlkwhe}) and $H^2(\Omega)$ respectively).

Section \ref{jkgjkhgjgu} shows
 that when the entries of $a$ are only assumed to be bounded,  the solution space $V$ is
isomorphic to $V'=H^1_0(\Omega)\cap
 H^2(\Omega)$, that is  for  arbitrarily rough coefficients the  approximation space  $V$ is still ``as thin as'' $H^2$ (isomorphic to $H^2$). Moreover, the transfer property introduced in theorem \ref{sdjhskjdhskdhsdsdsdsxkhje} allows us to explicitly construct a  finite-dimensional space $V_h$, isomorphic to $V_h'$, such that all elements of $V$ are in $H^1$ norm  distance at most $h$ from $V_h$.

We next show that the thin subspace notion  is implicitly present in classical homogenization problem when
$a$ is periodic, with period $\epsilon$, i.e. when equation \eref{scalarproblem0} is of the form
\begin{equation}\label{classicperiodic}
-\text{div} (a(x/ \ve) \nabla u^\ve(x))=f(x), \text { in } \Omega \subset \mathbb{R}^d
\end{equation}

 From the two-scale asymptotic expansion ansatz justified in periodic homogenization (e.g.,\cite{BeLiPa78}, \cite{JiKoOl91}, \cite{BaPa90}, \cite{CioDon99}, \cite{MR1859696}), we know that $u^\epsilon$ can be approximated  in $H^1$ norm by (modulo boundary correctors which we do not discuss here for the sake of simplicity of  presentation)
   \begin{equation}\label{corrector}
  \begin{array}{rl}
  \hat{u}^\epsilon(x) = \hat u(x) +\epsilon \sum_{k=1}^{d} \chi_k \left(\frac{x}{\epsilon}\right) \frac{\partial \hat u(x)}{\partial x_k}.
  \end{array}
  \end{equation}
where  $\hat u$ is the solution of the homogenized problem
\begin{equation}\label{classichomogenized}
-\text{div} \left( \hat{a} \nabla \hat{u} \right) = f(x)
\end{equation}
 with constant homogenized (effective) coefficient $\hat{a}$.  Here, the  exact solution $u^\ve$ has both fine, $O(\ve)$, and coarse, $O(1)$, variations (oscillations), while the homogenized solutions $\hat u(x)$ has only coarse scale variations.  In \eqref{classichomogenized}, the periodic functions $\chi_k $
  are solutions of the cell-problems
    \begin{equation}\label{cellpb1}
  \begin{array}{rl}
  \diiv (a(y) (e_k+\nabla \chi_k(y)))=0
  \end{array}
  \end{equation}
defined on the torus of dimension $d$.  The second term in the right hand side of \eqref{classichomogenized} is known as a corrector; it has both fine  and coarse scales, but the fine scales ($\ve$- oscillations) enter in a controlled way via  $d$ solutions of the cell problems that do not depend on $f(x)$ and the domain $\Omega$ (so that $\chi_k $ are  completely determined by the microstructure $a(x)$).

Furthermore, the two-scale convergence approach \cite{Ngu90, Al92}  provides a simple and  elegant description of the approximation   for  the gradients $\nabla u_\epsilon$. Namely,
for every sufficiently smooth $\phi(x,y)$ which is also periodic in $\epsilon$,  $u_\epsilon \rightarrow \hat{u} $ weakly in $H^1(\Omega)$ and
\begin{equation}\label{two-scale}
\int_{\Omega} \phi(x,\frac{x}{\epsilon}) \nabla u_\epsilon \rightarrow \int_{\Omega \times \T^d} \phi(x,y) (I_d+\nabla \chi_.(y))\nabla \hat{u}(x)
\end{equation}
where $\T^d$ is the torus of dimension $d$.  Thus $\nabla u_\epsilon$ can be approximated in the sense of \eqref{two-scale} by functions of the form $(I_d+\nabla \chi_.(x/\ve))\nabla \hat{u}(x)$

The latter observation combined with  \eqref{corrector}  shows that classical homogenization results can be viewed as follows.
The solution space $V$ for the problem   \eref{classicperiodic}
can be approximated in $H^1$ norm by a (``thin'') subspace of $H^1(\Omega)$ parameterized by solutions of \eqref{classichomogenized} which are in $H^2 \cap H^1_0(\Omega)$. Moreover, homogenization theory shows us how to construct an approximation of the space $V$. Indeed, \eqref{corrector}, \eqref{two-scale} show that this approximation space  is determined by $\hat u(x)$ and solutions of the cell problems  \eref{cellpb1} over one period.

\paragraph{Cell problems.}
This periodic homogenization scheme was generalized to stationary ergodic coefficients $a(x/ \ve, \omega)$, with $\omega$ in some probability space. Here, there are also analogs of the cell problems  that require the solution of $d$ different boundary value problems for the PDE $\text{div} (a(x, \omega)\nabla u_i)=0, i=1, \dots,d$ in a cube of size $R \to \infty$ for a typical realization $\omega$ \cite{JikKozOle94,PapVar82}.  The solutions  $u_i$  must be pre-computed in order to obtain a homogenized PDE and an approximate solution  just like in the periodic problem, which is why they still are called  cell problems even though there is no  actual periodicity cell in the microstructure. Here, the coefficients in the cell problems   have both fine and coarse scales, as in the original PDE $\text{div} (a(x /\ve, \omega)\nabla u^\ve)=f$. The advantage of solving  cell problems  numerically (in both  the periodic and random case) comes  when, e.g.,  we need to solve for many  different $f$ or for a corresponding evolution problem $\partial_t u^\ve=\text{div} (a(x /\ve, \omega)\nabla u^\ve) -f$ when updating in fine time scales.

Note that the major difficulty in advancing from periodic to random homogenization was to understand {\it what is the proper analog of the periodic cell problem}. In this work, we ask  a similar question-- what are the analogs of cell problems for most general arbitrarily rough coefficients?  For the problem \eqref{scalarproblem0}, we provide two answers.

 First, in Theorem \ref{Corol1}, we  introduce functions
$\theta_k(x)$  that are analogs  of the cell problems since they  are determined by the  coefficients $a(x)$ but  do not depend on $f(x)$. Note, that these ``generalized cell problems'' must depend on the domain $\Omega$ since the coefficients are no longer translationally invariant (as in periodic and random stationary case).  This dependence  enters via $\Psi_k$ in \eqref{kjskske23} using the transfer property. The approximation space in   Theorem \ref{Corol1}  is $\Theta_h$ defined in \eref{harmonsedcalwsar}-\eqref{harmseeddcalwsar}. Similarly, in Theorem \ref{jhdjkhsgdkjgeyhde}, we introduce  functions $\Phi_k(x)$  that solve  \eqref{harmonsedcalwsarm}   with localized right hand side  that are also analogs of the cell problems.

Secondly, the {\it harmonic coordinates}  \eqref{harmonic} provide yet  another analog of   cell problems in classical homogenization. Their advantage is obvious-- there is only $d$ of them, whereas  in Theorems  \ref{Corol1}  and \ref{jhdjkhsgdkjgeyhde}, the number of cell problems (number of elements in the basis of $V_h$) is $N(h)$. In fact, since in the simplest case of periodic homogenization $d$ cell problems \eqref{cellpb1} must be used, one should expect that for the more general coefficients $a(x)$,  $d$ would be the minimal number of cell problems.
On the other hand,  the  finite dimensional approximation  based on harmonic coordinates, in general, is not   direct (involves the non-conforming error) as one   can be seen from Theorem  \ref{jkgkjghjg} and example right after this Theorem.

 Harmonic coordinates play an important role in various homogenization approaches, both theoretical and numerical, which is why we present here a short account of their development. Recall that harmonic coordinates  were  introduced in \cite{MR542557} in the context of random homogenization. Next, harmonic coordinates have been used in one dimensional and quasi-one dimensional divergence form elliptic problems  \cite{ BabOsb83, MR1286212}, allowing for efficient finite
dimensional approximations.

The idea of using particular solutions  in numerical homogenization to approximate the solution space of \eqref{scalarproblem0}  have been first proposed in reservoir modeling
in the 1980s \cite{BraWu09}, \cite{WhHo87} (in which a global scale-up method was introduced based on generic flow solutions i.e.,
flows calculated from generic boundary conditions). Its rigorous mathematical analysis was done
only recently \cite{MR2292954}. In  \cite{MR2292954},  it was shown   that if $a(x)$ is not periodic but satisfies the  Cordes conditions (a restriction on anisotropy for $d \geq 3$, no restriction for $d=2$ and convex domains $\Omega$), then    the (``thin'') approximation space $V$ can be
 constructed from any set of $d$ ``linearly independent'' solutions of
 \eref{scalarproblem0} (harmonic coordinates $F$, for instance by observing that $u\circ F^{-1}$ spans $H^2 \cap H^1_0(\Omega)$ as $f$ spans $L^2(\Omega)$).  In the present work (section \ref{lkjlsdhkj}) for
 elasticity problems, we show that $d(d+1)/2$ ``linearly independent'' solutions are
 required (equation \eref{elasticityh}).

In \cite{AllBri05},  the solution space $V$ is approximated
 by composing splines with local harmonic coordinates (leading to higher accuracy), and a proof of convergence is given for
periodic media. Harmonic coordinates have been motivated and linked to periodic homogenization in \cite{AllBri05} by observing that equation \eref{corrector} can in fact be seen as a Taylor expansion of $\hat u(x+\chi.(x)) $ where $x+\chi_.(x)$ is harmonic, i.e., satisfies \eqref{cellpb1}. It is also observed in \cite{AllBri05} that replacing
$x+\chi_.(x)$ by global harmonic coordinates $F(x)$ automatically enforces Dirichlet boundary conditions on $\hat{u}^\epsilon$.

More recently, in \cite{MR2322432, MR2281625, BraWu09},
the idea of a global change of coordinates analogous to harmonic coordinates was implemented numerically in order to
up-scale porous media flows. We refer, in particular, to a recent review article \cite{BraWu09}   for an overview of some main challenges in reservoir modeling and a description of global scale-up strategies based on generic flows.

\paragraph{Abstract operator homogenization approaches and their relation to our work.} Recall that the theory of homogenization in its most general formulation is
based on abstract operator convergence, --i.e.,  $G$-convergence for
symmetric operators, $H$-convergence for non-symmetric operators and
$\Gamma$-convergence for variational problems. We refer to the work
of De Giorgi, Spagnolo, Murat, Tartar, Pankov and many others
\cite{Mur78, Gio75, MR630747, MR0477444, MR0240443, MR506997,
MR1968440}). $H$, $G$ and $\Gamma$-convergence allows one to obtain the
convergence of a sequence of operators parameterized by $\epsilon$ under
very weak assumptions on the coefficients.  The concepts of ``thin'' space and generalized cell problems  are implicitly present  in this most general form of homogenization theory through
the introduction of oscillating test
functions in $H$-convergence
\cite{Mur78} (see also related work on G-convergence \cite{MR0240443,Gio75}).
Furthermore, the so called multiscale finite element method \cite{MR1455261, MR1898136} can be seen as a numerical generalization of this idea of oscillating test functions
with the purpose of constructing a numerical (finite dimensional) approximation of the ``thin'' space of solutions $V$. We refer to \cite{AnGlo06} for convergence results on the multiscale finite element method in the framework of $G$ and $\Gamma$-convergence.

Observe that in most engineering problems, one has to deal with a
given medium and not with a family of media.  In particular, in those problems, it is not
possible to find a small parameter $\epsilon$ intrinsic to the
medium  with respect to which one could perform an asymptotic
analysis. Indeed, given a medium that is not periodic or stationary ergodic, it is not clear how to define a family of operators $A_\epsilon$. Moreover, the definition of oscillating test functions involves the limiting (homogenized) operator $\hat A$. While this  works well for the proof of the  abstract convergence results, in practice only the coefficients $A$ are known (computing $\hat A$ may not be possible), and our approach allows one to construct the approximate (upscaled) solution  from the given coefficients without
constructing $\hat A$.
Hence, the main difference between $H$ or $G$ convergence and present work is that instead of characterizing the limit of an $\epsilon$-family of  boundary value problems we are approximating the solution to  a given problem with a  finite-dimensional operator with explicit error estimates (i.e. constructing an explicit finite dimensional approximation of the ``thin'' solution space $V$).

We refer to \cite{DeDoOw09} for
for an explicit construction of $\hat{A}$ with rough coefficients $a$ in two dimensions. In particular, it is shown in \cite{DeDoOw09} that
conductivity coefficients $a$ are  in one-to-one correspondence with convex functions $s(x)$ over the domain $\Omega$ and that homogenization of $a$ is equivalent to the linear interpolation over triangulations of $\Omega$ re-expressed using convex functions.

The thin subspace idea introduced in this section can be used to develop coarse graining numerical schemes through an energy matching principle.
We refer to \cite{OwDes09} for elasticity equations and to \cite{ZhBFOww09}  for atomistic to continuum models (with non-crystalline structures).

\paragraph{Other related works.}
By now, the field of asymptotic and numerical homogenization with non periodic
coefficients has become large enough that it is not possible to
cite all contributors. Therefore, we will restrict our attention to
works directly related to our work.

- In  \cite{MR2314852, EnSou08}, the structure of the
medium is numerically decomposed into a micro-scale and a
macro-scale (meso-scale) and solutions of cell problems are computed
on the micro-scale, providing local homogenized matrices that are
transferred (up-scaled) to the macro-scale grid. This procedure
allows one to obtain rigorous homogenization results with controlled
error estimates for non periodic media of the form
$a(x,\frac{x}{\epsilon})$ (where $a(x,y)$ is assumed to be smooth in
$x$ and periodic or ergodic with specific mixing properties in $y$).
Moreover, it is shown that the numerical algorithms associated with HMM and MsFEM can be
implemented for a class of coefficients that is much broader than $a(x,\frac{x}{\epsilon})$. We refer to \cite{AnGlo06} for convergence results on the Heterogeneous Multiscale Method in the framework of $G$ and $\Gamma$-convergence.

- More recent work includes an adaptive projection based method
\cite{MR2399542}, which is consistent with homogenization when there is scale
separation, leading to adaptive algorithms for solving problems with
no clear scale separation;  fast and sparse chaos approximations of
elliptic problems with stochastic coefficients \cite{MR2317004,
MR2399150}; finite difference approximations of fully nonlinear,
uniformly elliptic PDEs with Lipschitz continuous viscosity
solutions \cite{MR2361302} and operator splitting methods
\cite{MR2342991, MR2231859}.

- We refer the  reader to \cite{MR2283892} and the references therein for a series of computational papers on  cost versus accuracy capabilities for the generalized FEM.

- We  refer to \cite{MR2334772, MR2284699} (and references therein) for most recent results on homogenization of  scalar divergence-form elliptic operators with stochastic coefficients. Here the stochastic coefficients $a(x /\ve, \omega)$ are obtained from stochastic deformations (using random diffeomorphisms) of the periodic and stationary ergodic setting.

 - We refer the  reader to \cite{ChuHou09}, \cite{EffGaWu09} and \cite{MR1771781} for recent results on adaptive finite element methods for high contrast media.
 Observe that in \cite{ChuHou09}, contrast independent error estimates are obtained for a domain with high contrast inclusions by dividing the energy norm by the minimal value of $a$ over the domain $\Omega$.  The strategy  of \cite{MR1771781}  is to first prove a priori and a posteriori estimates
that are contrast independent and then construct a finite element mesh  adaptively such that the error is the smallest possible for a fixed number of degrees of freedom. In \cite{MR1771781}, the energy norm of the error is bounded by terms that are appropriately and explicitly weighted by $a$ to obtain error constants independent of the variability of $a$.

\section{Conclusions}

In this paper, we have been primarily concerned with  the following theoretical results:

\begin{itemize}
\item  the flux norm introduced in section \ref{sec1} and  its transfer property (theorems \ref{sdjhskjdhskdhsdsdsdsxkhje} and \ref{sdjhskzzhdsdkhje})

\item  the resulting theoretical Galerkin method with contrast independent accuracy  for linear PDEs with arbitrarily rough coefficients  (theorems \ref{jhdjkhsgdkjgeyhde}, \ref{Corol1}, \ref{Corol1del}, \ref{sids5sasawqebyuyuyyassdud})

\item a new class of
inequalities (section \ref{inequalities}) and the introduction of a Cordes-type condition for non-divergence tensorial elliptic equations (theorem \ref{kahlkguelwhu}).

\end{itemize}
The development of numerical techniques based on  these results is in progress  and  will be addressed elsewhere.

\section{Appendix}

\subsection{Extension to non-zero boundary conditions.}\label{NonZeroBC}
The analysis performed in  section \ref{jkgjkhgjgu} and the previous one can naturally be extended to other types of boundary conditions (Neumann or Dirichlet). To support our claim, we will provide here this extension in the scalar case with non-zero Neumann boundary conditions. By linearity, solutions of \eref{scalarproblem0} with nonzero boundary conditions can be written as the sum of the solution with zero-boundary condition and $  -\diiv \Big(a(x)  \nabla u(x)\Big)=f(x)$ and the solution with non-zero boundary condition and $  -\diiv \Big(a(x)  \nabla u(x)\Big)=0$. Hence, we will restrict our analysis to solutions of
\begin{equation}\label{scalarproblem0h}
\begin{cases}
    -\diiv \Big(a(x)  \nabla u(x)\Big)=0 \quad  x \in \Omega;  \;   a(x)=\{a_{ij} \in L^{\infty}(\Omega)\}\\
    n. (a \nabla u)(x)=h(x)  \quad  x \in \partial \Omega;   \quad h\in L^2(\partial \Omega),
    \end{cases}
\end{equation}
We assume $d\geq 2$.
Write $L^2_{potn}(\Omega)$ the closure of $\{\nabla v\,:\, v\in H^1(\Omega)\}$ in $(L^2(\Omega))^d$. Note that the difference with $L^2_{pot}(\Omega)$ lies in replacement of $v\in H^1_0(\Omega)$ by $v\in H^1(\Omega)$ in the definition of $L^2_{potn}(\Omega)$.
 For $\xi$ in $(L^2(\Omega))^d$, write $\xi_{potn}$ its orthogonal projection on
$L^2_{potn}(\Omega)$. For $v\in H^1(\Omega)$, write
\begin{equation}
\|v\|_{a\f,\text{n}}^2:=\int_{\Omega} (a\nabla v)_{potn}^2
\end{equation}
It follows from the following lemma that the $\|v\|_{a\f,\text{n}}$-norm is equivalent to the $H^1$-norm.
\begin{Lemma}
For $v\in H^1(\Omega)$
\begin{equation}\label{kjhgdksjghd1}
\lambda_{\min}(a) \|\nabla v\|_{(L^2(\Omega))^d}\leq \|v\|_{a\f,\text{n}} \leq \lambda_{\max}(a) \|\nabla v\|_{(L^2(\Omega))^d}
\end{equation}
\end{Lemma}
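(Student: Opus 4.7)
The plan is to mimic the proof of Proposition \ref{Prop0} from the Dirichlet case, with the only observation being that in the Neumann setting the natural projection space is $L^2_{potn}(\Omega)$, which now contains all gradients of $H^1(\Omega)$-functions (not just of $H^1_0(\Omega)$-functions). The crucial structural fact we use is that for $v \in H^1(\Omega)$, the vector field $\nabla v$ itself already lies in $L^2_{potn}(\Omega)$, so $\nabla v$ is orthogonal to the divergence-free complement of $L^2_{potn}(\Omega)$ in $(L^2(\Omega))^d$.

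For the upper bound, I would simply note that $(a\nabla v)_{potn}$ is the orthogonal projection of $a\nabla v$ onto a closed subspace of $(L^2(\Omega))^d$, so
\begin{equation}
\|v\|_{a\f,\text{n}}=\|(a\nabla v)_{potn}\|_{(L^2(\Omega))^d}\leq \|a\nabla v\|_{(L^2(\Omega))^d}\leq \lambda_{\max}(a)\|\nabla v\|_{(L^2(\Omega))^d},
\end{equation}
where the last step uses the uniform ellipticity bound $|a\xi|\leq \lambda_{\max}(a)|\xi|$ pointwise.

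For the lower bound, since $\nabla v \in L^2_{potn}(\Omega)$ and $a\nabla v - (a\nabla v)_{potn}$ lies in the orthogonal complement of $L^2_{potn}(\Omega)$, I would write
\begin{equation}
\int_\Omega (\nabla v)^T a \nabla v = \int_\Omega (\nabla v)^T (a\nabla v)_{potn},
\end{equation}
and then apply Cauchy--Schwarz to the right-hand side and uniform ellipticity $\lambda_{\min}(a)|\xi|^2 \leq \xi^T a \xi$ to the left-hand side, obtaining
\begin{equation}
\lambda_{\min}(a)\|\nabla v\|^2_{(L^2(\Omega))^d}\leq \|\nabla v\|_{(L^2(\Omega))^d}\,\|v\|_{a\f,\text{n}},
\end{equation}
from which the claimed inequality follows after dividing by $\|\nabla v\|_{(L^2(\Omega))^d}$ (the case $\nabla v=0$ being trivial).

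There is essentially no obstacle: the only minor subtlety is verifying that $\nabla v\in L^2_{potn}(\Omega)$ for every $v\in H^1(\Omega)$, which is immediate from the definition of $L^2_{potn}(\Omega)$ as the closure of $\{\nabla v : v\in H^1(\Omega)\}$ in $(L^2(\Omega))^d$. Note, compared with the Dirichlet case, we do not need Korn-type or Poincaré-type arguments because $\|\cdot\|_{a\f,\text{n}}$ is not being checked as a norm here but only as an equivalent seminorm/norm controlling $\|\nabla v\|_{(L^2(\Omega))^d}$; in particular the inequality \eqref{kjhgdksjghd1} is stated only in terms of $\|\nabla v\|_{(L^2(\Omega))^d}$, so no constant depending on $\Omega$ appears.
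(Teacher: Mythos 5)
Your proof is correct and matches the paper's argument: the paper likewise obtains the lower bound from $\int_\Omega (\nabla v)^T a\nabla v \leq \|\nabla v\|_{(L^2(\Omega))^d}\|v\|_{a\f,\text{n}}$ (i.e., $\nabla v\in L^2_{potn}(\Omega)$ plus Cauchy--Schwarz and ellipticity) and declares the upper bound straightforward. You merely spell out the projection-is-nonexpansive step and the membership $\nabla v\in L^2_{potn}(\Omega)$ more explicitly than the paper does.
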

\begin{proof}
The proof of the right hand side of \eref{kjhgdksjghd1} is straightforward. The proof of the left hand side follows from
\begin{equation}\label{kjhgdksjgsshd0d}
\int_{\Omega} (\nabla v)^T a \nabla v\leq \|\nabla v\|_{(L^2(\Omega))^d}\|v\|_{a\f,\text{n}}
\end{equation}
\end{proof}

Write $\Lambda$ the Dirichlet  to Neumann map  mapping $v|_{\partial \Omega}$ onto $n\cdot \nabla v$ on $\partial \Omega$ where $v$ is the solution of $\Delta v=0$ in $\Omega$ (and $n$, the exterior normal to the boundary $\partial \Omega$). Write $\Psi_k$ the orthonormal eigenvectors of $\Lambda$ and $\lambda_k$ the associated increasing and positive eigenvalues.
Let $V_n$ be the finite dimensional subspace of $H^1(\Omega)$ formed by the linear span of $v_1,\ldots,v_n$, where $v_k$ is the solution of
\eref{scalarproblem0h} with $h=\Psi_k$.

\begin{Theorem}\label{thm1}
 For $h\in L^2(\Omega)$, let $u$ be the
solution of \eref{scalarproblem0h}. Then,
\begin{equation}\label{jksdhsjdhdf}
\sup_{h\in L^2(\partial \Omega)} \inf_{v \in V_n} \frac{\|u-v\|_{a\f,\text{n}}}{\|h\|_{L^2(\partial \Omega)}}=\frac{1}{(4\pi)^\frac{1}{4}}\Big(\frac{|\partial \Omega|}{\Gamma(1+\frac{d-1}{2})n }\Big)^\frac{1}{2(d-1)}
\end{equation}
Furthermore, the space $V_n$ leads to the smallest possible constant in the right hand side of \eref{jksdhsjdhdf} among all subspaces of $H^1(\Omega)$ with $n$ elements.
\end{Theorem}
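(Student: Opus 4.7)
The proof follows the structure of Theorem \ref{Corol1}, with the Laplace-Dirichlet spectrum on $\Omega$ replaced by the Steklov (Dirichlet-to-Neumann) spectrum on $\partial\Omega$, and with a Neumann analog of the transfer property (Theorem \ref{sdjhskjdhskdhkhje}) reducing the computation to the constant-coefficient case $a=I$. First I would establish the Neumann transfer: for $u,u'$ solving \eref{scalarproblem0h} with coefficients $a$ and $I$ respectively for the same data $h$, both fluxes $a\nabla u$ and $\nabla u'$ are divergence-free in $\Omega$ with identical normal trace $h$ on $\partial\Omega$. Hence $a\nabla u-\nabla u'$ is orthogonal in $(L^2(\Omega))^d$ to every $\nabla\varphi$ with $\varphi\in H^1(\Omega)$, that is, to $L^2_{potn}(\Omega)$, which gives $(a\nabla u)_{potn}=\nabla u'$. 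Applied to $v_k\in V_n$ this yields $(a\nabla v_k)_{potn}=\nabla v_k'$ where $v_k'=w_k/\lambda_k$ is the harmonic extension of $\Psi_k$ rescaled so that $n\cdot\nabla v_k'=\Psi_k$ (since $n\cdot\nabla w_k=\Lambda\Psi_k=\lambda_k\Psi_k$). Consequently
\begin{equation*}
\|u-v\|_{a\f,\text{n}}=\|\nabla u'-\nabla v'\|_{(L^2(\Omega))^d}, \qquad v\in V_n,\ v'\in V_n':=\operatorname{span}\{w_1,\dots,w_n\}.
\end{equation*}

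Next I would compute this infimum exactly using the Steklov basis. Green's identity applied to the harmonic $w_k,w_j$ gives
\begin{equation*}
(\nabla w_k,\nabla w_j)_{L^2(\Omega)}=\int_{\partial\Omega}\Psi_k(n\cdot\nabla w_j)=\lambda_k\delta_{kj}.
\end{equation*}
Writing $h=\sum_{k\ge1}h_k\Psi_k$ (assuming $\int_{\partial\Omega}h=0$ so that the Neumann problem is solvable), one obtains $\nabla u'=\sum_k(h_k/\lambda_k)\nabla w_k$, and the $L^2$-projection onto $\nabla V_n'$ removes the first $n$ modes, yielding
\begin{equation*}
\inf_{v'\in V_n'}\|\nabla u'-\nabla v'\|_{(L^2(\Omega))^d}^2=\sum_{k>n}\frac{h_k^2}{\lambda_k}.
\end{equation*}
Since $(\lambda_k)$ is increasing, the supremum over $h$ with $\|h\|_{L^2(\partial\Omega)}^2=\sum_k h_k^2$ of this ratio equals $1/\lambda_{n+1}$, attained at $h=\Psi_{n+1}$, giving the exact identity $\sup_h\inf_{v\in V_n}\|u-v\|_{a\f,\text{n}}/\|h\|_{L^2(\partial\Omega)}=1/\sqrt{\lambda_{n+1}}$.

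Then I would apply Weyl's asymptotic formula for the Steklov spectrum. The Dirichlet-to-Neumann operator $\Lambda$ is a first-order self-adjoint elliptic pseudo-differential operator on the $(d-1)$-dimensional manifold $\partial\Omega$ (with principal symbol $|\xi|$), so under the $C^2$ assumption on $\partial\Omega$ one has
\begin{equation*}
\lambda_n\sim 2\pi\Big(\frac{n}{\omega_{d-1}|\partial\Omega|}\Big)^{\frac{1}{d-1}},\qquad \omega_{d-1}=\frac{\pi^{(d-1)/2}}{\Gamma(1+(d-1)/2)}.
\end{equation*}
A direct algebraic simplification, using $\pi^{1/4}/(2\pi)^{1/2}=1/(4\pi)^{1/4}$, produces $1/\sqrt{\lambda_{n+1}}\sim(4\pi)^{-1/4}\bigl(|\partial\Omega|/(\Gamma(1+(d-1)/2)n)\bigr)^{1/(2(d-1))}$, which is exactly the constant in \eref{jksdhsjdhdf}. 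Optimality is established in parallel with Theorem \ref{Corol1}: for an arbitrary $n$-dimensional $V\subset H^1(\Omega)$, the transfer step of paragraph one gives $\inf_{v\in V}\|u-v\|_{a\f,\text{n}}=\inf_{v'\in V'}\|\nabla u'-\nabla v'\|_{L^2}$ for a corresponding $V'$, and the Courant-Fischer min-max principle applied in the Steklov basis forces the infimum to be at least $1/\sqrt{\lambda_{n+1}}$, with equality realized asymptotically by $V_n$.

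The main obstacle is invoking the Weyl law for the Steklov spectrum under only $C^2$-regularity of $\partial\Omega$; this is exactly the regularity invoked in the theorem and uses classical asymptotics for first-order elliptic pseudo-differential operators on compact manifolds (the analog of Weyl's formula \eref{sssddesqwsdedud}). A secondary technical point is handling the constant-function ambiguity of the Neumann problem: one must restrict to mean-zero $h$ and drop the zero Steklov eigenvalue, noting that constants added to $v_k'$ are absorbed because $\|\cdot\|_{a\f,\text{n}}$ depends only on gradients.
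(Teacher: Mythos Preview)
Your proposal is correct and follows essentially the same route as the paper's proof. Both arguments hinge on the observation that $(a\nabla u)_{potn}=\nabla w^h$ for the harmonic function $w^h$ with Neumann data $h$ (the paper packages this as Lemma~\ref{h1}), then reduce the computation to the Steklov spectrum: the paper does this via the identity $\|(\nabla w^h-a\nabla v)_{potn}\|^2=\int_{\partial\Omega}(h-h_v)\Lambda^{-1}(h-h_v)$ (Lemma~\ref{h2}), while you obtain the equivalent expression $\sum_{k>n}h_k^2/\lambda_k$ directly from the orthogonality $(\nabla w_k,\nabla w_j)=\lambda_k\delta_{kj}$; both then conclude $\sup\inf=1/\sqrt{\lambda_{n+1}}$ and invoke the Weyl asymptotic \eref{sssddesqwsdedudpp} for the first-order pseudo-differential operator $\Lambda$.
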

\begin{Remark}
We also refer to \cite{MR2430615} for the related introduction of the penetration function, which measures the effect of the boundary data on the energy of  solutions of \eref{scalarproblem0h} and is used to assess the accuracy of global-local approaches for recovering local solution features from coarse grained solutions such as those delivered by homogenization theory.
\end{Remark}
\begin{proof}
For $h\in L^2(\partial \Omega)$, let  $w^h$ be the solution of
\begin{equation}\label{scalarproblem0hh}
\begin{cases}
    \Delta w^h(x)=0 \quad  x \in \Omega;  \\
    n\cdot \nabla w^h (x)=h(x)  \quad  x \in \partial \Omega;   \quad h\in L^2(\partial \Omega),
    \end{cases}
\end{equation}
Let $W_n$ be a finite dimension subspace of $H^1(\Omega)$ of elements $v$ satisfying $\diiv(a\nabla v)=0$ in $\Omega$.
To prove Theorem \ref{thm1}, we will first prove lemmas \ref{h1} and \ref{h2} given below.

\begin{Lemma}\label{h1}
 For $h\in L^2(\partial \Omega)$, let  $u$ be the solution of \eref{scalarproblem0h} and $w^h$ be the solution of \eref{scalarproblem0hh}. We have,
\begin{equation}
\sup_{h\in L^2(\partial \Omega)} \inf_{v \in W_n} \frac{\|u-v\|_{a\f,\text{n}}}{\|h\|_{L^2(\partial \Omega)}}=\sup_{h \in L^2(\Omega)} \inf_{v \in W_n} \frac{\|(\nabla w^h-a \nabla v)_{potn}\|_{(L^2(\Omega))^d}}{\|h\|_{L^2(\partial \Omega)}}
\end{equation}
\end{Lemma}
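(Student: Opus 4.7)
The plan is to mimic Lemma 2.1 (the Dirichlet analogue), replacing the Weyl--Helmholtz decomposition $L^2=L^2_{pot}\oplus L^2_{curl}$ with its Neumann counterpart $L^2=L^2_{potn}\oplus L^2_{potn}^\perp$, and to show that the two fields $a\nabla u$ and $\nabla w^h$ have identical $L^2_{potn}$-projections.

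First I would characterize the orthogonal complement of $L^2_{potn}(\Omega)$. A field $\xi\in(L^2(\Omega))^d$ lies in $L^2_{potn}^\perp$ iff $\int_\Omega \xi\cdot\nabla v=0$ for every $v\in H^1(\Omega)$. Integration by parts (testing first with $v\in C_0^\infty(\Omega)$ to extract $\diiv\xi=0$ in $\Omega$, then with general $v\in H^1(\Omega)$ to extract $\xi\cdot n=0$ on $\partial\Omega$, the normal trace being well-defined in $H^{-1/2}(\partial\Omega)$ since $\diiv\xi\in L^2$) gives
$$L^2_{potn}^\perp=\{\xi\in (L^2(\Omega))^d\,:\,\diiv\xi=0\text{ in }\Omega,\;\xi\cdot n=0\text{ on }\partial\Omega\}.$$

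Next I would show that $a\nabla u-\nabla w^h\in L^2_{potn}^\perp$. Indeed, by \eqref{scalarproblem0h} and \eqref{scalarproblem0hh} one has $\diiv(a\nabla u)=0=\Delta w^h$ in $\Omega$, so the difference is divergence-free. Moreover, the normal traces satisfy $n\cdot(a\nabla u)=h=n\cdot\nabla w^h$ on $\partial\Omega$, hence the difference has vanishing normal trace. Therefore $(a\nabla u)_{potn}=(\nabla w^h)_{potn}$, and for every $v\in H^1(\Omega)$ (in particular every $v\in W_n$),
$$\bigl(a\nabla u-a\nabla v\bigr)_{potn}=\bigl(\nabla w^h-a\nabla v\bigr)_{potn}.$$
Taking $(L^2(\Omega))^d$-norms, recalling $\|u-v\|_{a\f,\text{n}}=\|(a\nabla u-a\nabla v)_{potn}\|_{(L^2(\Omega))^d}$, and then passing to the infimum over $v\in W_n$ and the supremum over admissible $h\in L^2(\partial\Omega)$ (i.e.\ those satisfying the Neumann compatibility condition $\int_{\partial\Omega}h=0$) yields the claimed identity.

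The argument is essentially a bookkeeping exercise once the Neumann Weyl--Helmholtz decomposition is in hand, so there is no serious obstacle; the only mild technical point is the justification of the normal trace identity $\xi\cdot n=0$ for $\xi\in L^2_{potn}^\perp$, which relies on the standard trace theorem for $H(\diiv;\Omega)$. Note that this same identity $(a\nabla u)_{potn}=(\nabla w^h)_{potn}$ will also be the key to Lemma h2, which presumably relates $\|h\|_{L^2(\partial\Omega)}$ to $\|\Delta w^h\|$-type quantities via the Dirichlet-to-Neumann spectrum $(\Psi_k,\lambda_k)$.
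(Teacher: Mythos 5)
Your proposal is correct and follows essentially the same route as the paper: the paper's proof is the single integration-by-parts identity $\int_{\Omega}\nabla \varphi\cdot (\nabla w^h-a \nabla u)=\int_{\partial \Omega} \varphi\, n\cdot(\nabla w^h-a \nabla u)=0$ for all $\varphi \in H^1(\Omega)$, which is exactly your two observations (divergence-free in $\Omega$, matching normal traces on $\partial\Omega$) combined, yielding $(a\nabla u)_{potn}=(\nabla w^h)_{potn}$ and hence the stated equality.
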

\begin{proof}
The proof follows by observing that for all $\varphi \in H^1(\Omega)$
\begin{equation}
\int_{\Omega}\nabla \varphi (\nabla w^h-a \nabla u)=\int_{\partial \Omega} \varphi\, n\cdot(\nabla w^h-a \nabla u)=0
\end{equation}
\end{proof}
For $v\in H^1(\Omega)$, write $h_v:=n.a\nabla v$ defined on $\partial \Omega$
\begin{Lemma}\label{h2}
For $h\in L^2(\partial \Omega)$ and $v\in H^1(\Omega)$,
\begin{equation}
\|(\nabla w^h-a \nabla v)_{potn}\|_{(L^2(\Omega))^d}^2=\int_{\partial \Omega} (h-h^v) \Lambda^{-1}(h-h^v)
\end{equation}
\end{Lemma}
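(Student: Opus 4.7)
The plan is to identify $(\nabla w^h - a\nabla v)_{potn}$ as the gradient of a harmonic function whose Neumann data is exactly $h - h^v$, and then recognise the resulting Dirichlet energy as the quadratic form on $\partial\Omega$ associated with $\Lambda^{-1}$. The implicit hypothesis I will use, natural in the context of Lemma \ref{h1} and Theorem \ref{thm1}, is that $v$ lies in the space of $a$-harmonic functions so that $\diiv(a\nabla v)=0$ in $\Omega$ and the normal trace $h^v=n\cdot a\nabla v$ is well defined on $\partial\Omega$.

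Set $\xi:=\nabla w^h-a\nabla v\in (L^2(\Omega))^d$. First I would observe that $\diiv\xi=\Delta w^h-\diiv(a\nabla v)=0$ in $\Omega$, and that its normal trace on $\partial\Omega$ equals $h-h^v$; in particular the divergence theorem forces $\int_{\partial\Omega}(h-h^v)=0$, so $h-h^v$ lies in the range of $\Lambda$ and $\Lambda^{-1}(h-h^v)$ is well defined (up to an additive constant which does not affect the integral on the right-hand side). Next, by definition of the orthogonal projection onto $L^2_{potn}(\Omega)$, one has $\xi_{potn}=\nabla\phi$ where $\phi\in H^1(\Omega)/\mathbb{R}$ is characterised by
\begin{equation*}
\int_\Omega \nabla\psi\cdot\nabla\phi = \int_\Omega \nabla\psi\cdot\xi \qquad \forall\,\psi\in H^1(\Omega).
\end{equation*}

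Integrating by parts the right-hand side and using $\Delta w^h=0$ together with $\diiv(a\nabla v)=0$, one gets $\int_\Omega \nabla\psi\cdot\xi=\int_{\partial\Omega}\psi(h-h^v)$. Therefore $\phi$ is the weak solution of $\Delta\phi=0$ in $\Omega$ with $n\cdot\nabla\phi=h-h^v$ on $\partial\Omega$; by definition of the Dirichlet-to-Neumann map $\Lambda$ this is equivalent to $\phi|_{\partial\Omega}=\Lambda^{-1}(h-h^v)$. Taking $\psi=\phi$ in the identity above yields
\begin{equation*}
\|\xi_{potn}\|^2_{(L^2(\Omega))^d}=\int_\Omega |\nabla\phi|^2=\int_{\partial\Omega}\phi\,(h-h^v)=\int_{\partial\Omega}(h-h^v)\,\Lambda^{-1}(h-h^v),
\end{equation*}
which is the claimed identity.

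There is no serious analytic obstacle; the only point requiring care is the interior identity $\diiv(a\nabla v)=0$, without which the integration-by-parts step would produce an extra volume term $-\int_\Omega\psi\,\diiv(a\nabla v)$ and the clean identification $\phi|_{\partial\Omega}=\Lambda^{-1}(h-h^v)$ would fail. In the setting of Theorem \ref{thm1} the test functions $v$ range over $V_n$, whose basis elements $v_k$ solve \eqref{scalarproblem0h}, so this hypothesis is automatic.
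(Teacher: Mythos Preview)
Your proof is correct and takes essentially the same approach as the paper: you identify the potential part of $\nabla w^h-a\nabla v$ with $\nabla w^{h-h^v}$ and then compute its Dirichlet energy as the $\Lambda^{-1}$-quadratic form. The only cosmetic difference is that the paper reaches the same identification through the dual characterisation $\|\xi_{potn}\|=\sup_{\varphi}\frac{(\nabla\varphi,\xi)}{\|\nabla\varphi\|}$ rather than by naming the projector $\phi$ explicitly; your remark that the argument tacitly requires $\diiv(a\nabla v)=0$ is accurate and matches the paper's intended setting (the space $W_n$ in Lemma \ref{h1}).
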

\begin{proof}
The proof is obtained by observing that
\begin{equation}
\begin{split}
\|(\nabla w^h-a \nabla v)_{potn}\|_{(L^2(\Omega))^d}&=\sup_{\varphi \in H^1(\Omega)} \frac{\int_\Omega \nabla \varphi (\nabla w^h-a \nabla v)}{\|\nabla \varphi\|_{(L^2(\Omega))^d}}\\
&=\sup_{\varphi \in H^1(\Omega)} \frac{\int_{\partial \Omega}  \varphi (h-h_v)}{\|\nabla \varphi\|_{(L^2(\Omega))^d}}\\
&=\sup_{\varphi \in H^1(\Omega)} \frac{\int_{\Omega}  \nabla \varphi \nabla w^{(h-h_v)}}{\|\nabla \varphi\|_{(L^2(\Omega))^d}}\\&=\|\nabla w^{h-h_v}\|_{(L^2(\Omega))^d}\\
&=\Big(\int_{\partial \Omega}  (h-h^v) \Lambda^{-1}(h-h^v)\Big)^\frac{1}{2}
\end{split}
\end{equation}
\end{proof}

Write $Z_n$ the subspace of $H^{-\frac{1}{2}}(\partial \Omega)$ induced by elements $n\cdot a\nabla v$ defined on $\partial \Omega$ for $v\in W_n$. Write $h_k$ the coefficients of $h$ in the basis $\Psi_k$, i.e. $h_k:=\int_{\partial \Omega} h \Psi_k$,
It follows from lemmas \ref{h1} and \ref{h2} that

\begin{equation}
\sup_{h\in L^2(\partial \Omega)} \inf_{v \in W_n} \frac{\|u-v\|_{a\f,\text{n}}}{\|h\|_{L^2(\partial\Omega)}}=\sup_{h \in L^2(\partial \Omega)} \inf_{v \in W_n} \Big(\frac{\sum_{k=1}^\infty \frac{1}{\lambda_k} (h_k-h^v_k)^2}{\sum_{k=1}^\infty h_k^2}\Big)^\frac{1}{2}
\end{equation}
It follows that an  space $W_n$ with optimal approximation constant is obtained when  $Z_n$ is the linear span of $(\Psi_1,\ldots,\Psi_n)$. In that case

\begin{equation}
\sup_{h\in L^2(\partial \Omega)} \inf_{v \in W_n} \frac{\|u-v\|_{a\f,\text{n}}}{\|h\|_{L^2(\partial \Omega)}}=\frac{1}{(\lambda_{n+1})^\frac{1}{2}}
\end{equation}

We deduce equation \eref{jksdhsjdhdf}  using the following Weyl's asymptotic for $\Lambda$ (as a first order pseudo-differential operator, see for instance \cite{Kotiuga06}, \cite{MR1029119} and \cite{MR1862026})

\begin{equation}\label{sssddesqwsdedudpp}
\begin{split}
(\lambda_k)^2 \sim  4\pi
\Big(\frac{\Gamma(1+\frac{d-1}{2})k}{|\partial \Omega|}\Big)^\frac{2}{d-1}.
\end{split}
\end{equation}

This concludes the proof of Theorem \ref{thm1}.

\end{proof}

\subsection{Proof of lemma \ref{kahldssddkguelwhu}}\label{ell1}

Let $u$ be the solution of $\mathcal{L}u=f$ with Dirichlet boundary
condition (assume that it exists). Since $\alpha>0$, the solvability of \eref{kljdwedddlkwjewlkjer}
is equivalent to finding $u\in H^2\cap H^1_0(\Omega)$ such that
\begin{equation}
\Delta u=\alpha f+\Delta u-\alpha \mathcal{L}u
\end{equation}

Consider the mapping $T:H^2\cap H^1_0(\Omega)\rightarrow
H^2\cap H^1_0(\Omega)$ defined by $v=Tw$ where $v$ be the unique
solution of the Dirichlet problem for Poisson equation
\begin{equation}
\Delta v=\alpha f+\Delta w-\alpha \mathcal{L}w
\end{equation}

Let us now show that for $\beta_a<1$, $T$ is a contraction.

\begin{equation}
\begin{split}
\big\|T w_1-T w_2\big\|_{H^2\cap
H^1_0(\Omega)}=\|v_1-v_2\|_{H^2\cap H^1_0(\Omega)}
\end{split}
\end{equation}
Using the convexity of $\Omega$, one obtains the following classical
inequality satisfied by the Laplace operator (see lemma 1.2.2 of
\cite{MPG00})
\begin{equation}
\begin{split}
\|v_1-v_2\|_{H^2\cap H^1_0(\Omega)}\leq \|\Delta
(v_1-v_2)\|_{L^2(\Omega)}
\end{split}
\end{equation}
Hence,
\begin{equation}
\begin{split}
\big\|T w_1-T w_2\big\|_{H^2\cap H^1_0(\Omega)}^2\leq &\|\Delta
(w_1-w_2)-\alpha \mathcal{L}(w_1-w_2)\|_{L^2(\Omega)}^2\\
\leq &  \Big\|\sum_{i,j=1}^d
\big(\delta_{ij}-\alpha a_{ij}\big)
\partial_i
\partial_j (w_1-w_2)\Big\|_{L^2(\Omega)}^2
\end{split}
\end{equation}
Using Cauchy-Schwarz  inequality we obtain that
\begin{equation}
\begin{split}
\big\|T w_1-T w_2\big\|_{H^2\cap H^1_0(\Omega)}^2\leq &
\int_{\Omega}  \big(\sum_{i,j=1}^d
(\delta_{ij}-\alpha a_{ij})^2\big)
 \\& \big(\sum_{i,j=1}^d
(\partial_i
\partial_j (w_1^l-w_2^l))^2 \big)
\end{split}
\end{equation}
Hence observing that
\begin{equation}
\esssup_{\Omega}\big(\sum_{i,j=1}^d
(\delta_{ij}-\alpha a_{ij})^2\big)=\beta_a
\end{equation}
we obtain that
\begin{equation}
\begin{split}
\big\|T w_1-T w_2\big\|_{H^2\cap H^1_0(\Omega)}^2\leq
\esssup_{x\in \Omega}\beta_{a}(x) \big\|w_1-
w_2\big\|_{H^2\cap H^1_0(\Omega)}^2
\end{split}
\end{equation}
It follows that if $\beta_C<1$, then $T$ is a
contraction and we obtain the existence and solution of
\eref{kljdwedddlkwjewlkjer} through the fixed point theorem. Moreover,

\begin{equation}
\|\Delta u\|_{L^2(\Omega)}\leq \|\alpha
f\|_{L^2(\Omega)}+\beta_a^\frac{1}{2} \|\Delta u\|_{L^2(\Omega)}
\end{equation}
which concludes the proof.

\def\cprime{$'$} \def\cprime{$'$}
  \def\polhk#1{\setbox0=\hbox{#1}{\ooalign{\hidewidth
  \lower1.5ex\hbox{`}\hidewidth\crcr\unhbox0}}} \def\cprime{$'$}
  \def\polhk#1{\setbox0=\hbox{#1}{\ooalign{\hidewidth
  \lower1.5ex\hbox{`}\hidewidth\crcr\unhbox0}}}


\begin{thebibliography}{10}

\bibitem{MR2001070}
G.~Alessandrini and V.~Nesi.
\newblock Univalent {$\sigma$}-harmonic mappings: connections with
  quasiconformal mappings.
\newblock {\em J. Anal. Math.}, 90:197--215, 2003.

\bibitem{Al92}
G.~Allaire.
\newblock Homogenization and two-scale convergence.
\newblock {\em SIAM J. Math. Anal.}, 23:1482--1518, 1992.

\bibitem{MR1859696}
G.~Allaire.
\newblock {\em Shape optimization by the homogenization method}, volume 146 of
  {\em Applied Mathematical Sciences}.
\newblock Springer-Verlag, New York, 2002.

\bibitem{AllBri05}
G.~Allaire and R.~Brizzi.
\newblock A multiscale finite element method for numerical homogenization.
\newblock {\em Multiscale Model. Simul.}, 4(3):790--812 (electronic), 2005.

\bibitem{MR1892102}
A.~Ancona.
\newblock Some results and examples about the behavior of harmonic functions
  and {G}reen's functions with respect to second order elliptic operators.
\newblock {\em Nagoya Math. J.}, 165:123--158, 2002.

\bibitem{MR2231859}
T.~Arbogast and K.~J. Boyd.
\newblock Subgrid upscaling and mixed multiscale finite elements.
\newblock {\em SIAM J. Numer. Anal.}, 44(3):1150--1171 (electronic), 2006.

\bibitem{MR2342991}
T.~Arbogast, C.-S. Huang, and S.-M. Yang.
\newblock Improved accuracy for alternating-direction methods for parabolic
  equations based on regular and mixed finite elements.
\newblock {\em Math. Models Methods Appl. Sci.}, 17(8):1279--1305, 2007.

\bibitem{MR1286212}
I.~Babu{\v{s}}ka, G.~Caloz, and J.~E. Osborn.
\newblock Special finite element methods for a class of second order elliptic
  problems with rough coefficients.
\newblock {\em SIAM J. Numer. Anal.}, 31(4):945--981, 1994.

\bibitem{MR2430615}
I.~Babu{\v{s}}ka, R.~Lipton, and M.~Stuebner.
\newblock The penetration function and its application to microscale problems.
\newblock {\em BIT}, 48(2):167--187, 2008.

\bibitem{BabOsb83}
I.~Babu{\v{s}}ka and J.~E. Osborn.
\newblock Generalized finite element methods: their performance and their
  relation to mixed methods.
\newblock {\em SIAM J. Numer. Anal.}, 20(3):510--536, 1983.

\bibitem{MR1648351}
I.~Babu{\v{s}}ka and J.~E. Osborn.
\newblock Can a finite element method perform arbitrarily badly?
\newblock {\em Math. Comp.}, 69(230):443--462, 2000.

\bibitem{BaPa90}
N.~Bakhvalov and G.~Panasenko.
\newblock Homogenization: averaging processes in periodic media.
\newblock In {\em Mathematics and its applications, vol. 36}. Kluwer Academic
  Publishers, Dordrecht, 1990.

\bibitem{BeLiPa78}
A.~Bensoussan, J.~L. Lions, and G.~Papanicolaou.
\newblock {\em Asymptotic analysis for periodic structure}.
\newblock North Holland, Amsterdam, 1978.

\bibitem{MR1771781}
C.~Bernardi and R.~Verf{\"u}rth.
\newblock Adaptive finite element methods for elliptic equations with
  non-smooth coefficients.
\newblock {\em Numer. Math.}, 85(4):579--608, 2000.

\bibitem{MR2284699}
X.~Blanc, C.~Le Bris, and P.-L. Lions.
\newblock Une variante de la th\'eorie de l'homog\'en\'eisation stochastique
  des op\'erateurs elliptiques.
\newblock {\em C. R. Math. Acad. Sci. Paris}, 343(11-12):717--724, 2006.

\bibitem{MR2334772}
X.~Blanc, C.~Le Bris, and P.-L. Lions.
\newblock Stochastic homogenization and random lattices.
\newblock {\em J. Math. Pures Appl. (9)}, 88(1):34--63, 2007.

\bibitem{MR1968440}
A.~Braides.
\newblock {\em {$\Gamma$}-convergence for beginners}, volume~22 of {\em Oxford
  Lecture Series in Mathematics and its Applications}.
\newblock Oxford University Press, Oxford, 2002.

\bibitem{BraWu09}
L.~V. Branets, S.~S. Ghai, L.~L., and X.-H. Wu.
\newblock Challenges and technologies in reservoir modeling.
\newblock {\em Commun. Comput. Phys.}, 6(1):1--23, 2009.

\bibitem{BrSc02}
S.~C. Brenner and L.~R. Scott.
\newblock {\em The mathematical theory of finite elements methods}, volume~15
  of {\em Texts in Applied Mathematics}.
\newblock Springer, 2002.
\newblock Second edition.

\bibitem{MR2073507}
M.~Briane, G.~W. Milton, and V.~Nesi.
\newblock Change of sign of the corrector's determinant for homogenization in
  three-dimensional conductivity.
\newblock {\em Arch. Ration. Mech. Anal.}, 173(1):133--150, 2004.

\bibitem{MR2361302}
L.~A. Caffarelli and P.~E. Souganidis.
\newblock A rate of convergence for monotone finite difference approximations
  to fully nonlinear, uniformly elliptic {PDE}s.
\newblock {\em Comm. Pure Appl. Math.}, 61(1):1--17, 2008.

\bibitem{ChuHou09}
C.-C. Chu, I.~G. Graham, and T.~Y. Hou.
\newblock A new multiscale finite element method for high-contrast elliptic
  interface problems.
\newblock {\em Math. Comput.}, 2009.
\newblock Accepted for publication.

\bibitem{CioDon99}
Donato~P. Cioranescu~D.
\newblock An introduction to homogenization.
\newblock Oxford University Press, 1999.

\bibitem{MR1838500}
C.~Conca and M.~Vanninathan.
\newblock On uniform {$H\sp 2$}-estimates in periodic homogenization.
\newblock {\em Proc. Roy. Soc. Edinburgh Sect. A}, 131(3):499--517, 2001.

\bibitem{DeDoOw09}
M.~Desbrun, R.~Donaldson, and H.~Owhadi.
\newblock Discrete geometric structures in homogenization and inverse
  homogenization with application to eit.
\newblock {\em preprint arXiv:0904.2601}, 2009.

\bibitem{MR2314852}
W.~E, B.~Engquist, X.~Li, W.~Ren, and E.~Vanden-Eijnden.
\newblock Heterogeneous multiscale methods: a review.
\newblock {\em Commun. Comput. Phys.}, 2(3):367--450, 2007.

\bibitem{EffGaWu09}
Y.~Efendiev, J.~Galvis, and X.~Wu.
\newblock Multiscale finite element and domain decomposition methods for
  high-contrast problems using local spectral basis functions.
\newblock 2009.
\newblock Submitted.

\bibitem{MR2281625}
Y.~Efendiev, V.~Ginting, T.~Hou, and R.~Ewing.
\newblock Accurate multiscale finite element methods for two-phase flow
  simulations.
\newblock {\em J. Comput. Phys.}, 220(1):155--174, 2006.

\bibitem{MR2322432}
Y.~Efendiev and T.~Hou.
\newblock Multiscale finite element methods for porous media flows and their
  applications.
\newblock {\em Appl. Numer. Math.}, 57(5-7):577--596, 2007.

\bibitem{EnSou08}
B.~Engquist and P.~E. Souganidis.
\newblock Asymptotic and numerical homogenization.
\newblock {\em Acta Numerica}, 17:147--190, 2008.

\bibitem{Gio75}
E.~De Giorgi.
\newblock Sulla convergenza di alcune successioni di integrali del tipo
  dell'aera.
\newblock {\em Rendi Conti di Mat.}, 8:277--294, 1975.

\bibitem{MR630747}
E.~De Giorgi.
\newblock New problems in {$\Gamma $}-convergence and {$G$}-convergence.
\newblock In {\em Free boundary problems, Vol. II (Pavia, 1979)}, pages
  183--194. Ist. Naz. Alta Mat. Francesco Severi, Rome, 1980.

\bibitem{AnGlo06}
A.~Gloria.
\newblock Analytical framework for the numerical homogenization of elliptic
  monotone operators and quasiconvex energies.
\newblock {\em SIAM MMS}, 5(3):996--1043, 2006.

\bibitem{MR2399150}
H.~Harbrecht, R.~Schneider, and C.~Schwab.
\newblock Sparse second moment analysis for elliptic problems in stochastic
  domains.
\newblock {\em Numer. Math.}, 109(3):385--414, 2008.

\bibitem{MR1455261}
T.~Y. Hou and X.-H. Wu.
\newblock A multiscale finite element method for elliptic problems in composite
  materials and porous media.
\newblock {\em J. Comput. Phys.}, 134(1):169--189, 1997.

\bibitem{JiKoOl91}
V.~V. Jikov, S.~M. Kozlov, and O.~A. Oleinik.
\newblock {\em Homogenization of Differential Operators and Integral
  Functionals}.
\newblock Springer-Verlag, 1991.

\bibitem{JikKozOle94}
V.~V. Jikov, S.~M. Kozlov, and O.~A. Ole{\u\i}nik.
\newblock {\em Homogenization of differential operators and integral
  functionals}.
\newblock Springer-Verlag, Berlin, 1994.

\bibitem{OwDes09}
L.~Kharevych, P.~Mullen, H.~Owhadi, and M.~Desbrun.
\newblock Numerical coarsening of inhomogeneous elastic materials.
\newblock {\em ACM Transactions on Graphics (SIGGRAPH)}, 28(3), 2009.

\bibitem{MR961258}
N.~Kikuchi and J.~T. Oden.
\newblock {\em Contact problems in elasticity: a study of variational
  inequalities and finite element methods}, volume~8 of {\em SIAM Studies in
  Applied Mathematics}.
\newblock Society for Industrial and Applied Mathematics (SIAM), Philadelphia,
  PA, 1988.

\bibitem{Kotiuga06}
P.~R. Kotiuga.
\newblock A rationale for pursuing eit and mreit in 3-d based on weyl
  asymptotics and problem conditioning.
\newblock In {\em 12th Biennial Conference on Electromagnetic Field
  Computation, Miami, Florida, April 30-May 3, 2006}.

\bibitem{MR542557}
S.~M. Kozlov.
\newblock The averaging of random operators.
\newblock {\em Mat. Sb. (N.S.)}, 109(151)(2):188--202, 327, 1979.

\bibitem{MR1862026}
M.~Lassas and G.~Uhlmann.
\newblock On determining a {R}iemannian manifold from the
  {D}irichlet-to-{N}eumann map.
\newblock {\em Ann. Sci. \'Ecole Norm. Sup. (4)}, 34(5):771--787, 2001.

\bibitem{MR1029119}
J.~M. Lee and G.~Uhlmann.
\newblock Determining anisotropic real-analytic conductivities by boundary
  measurements.
\newblock {\em Comm. Pure Appl. Math.}, 42(8):1097--1112, 1989.

\bibitem{MR1903306}
S.~Leonardi.
\newblock Weighted {M}iranda-{T}alenti inequality and applications to equations
  with discontinuous coefficients.
\newblock {\em Comment. Math. Univ. Carolin.}, 43(1):43--59, 2002.

\bibitem{MPG00}
A.~Maugeri, D.~K. Palagachev, and L.~G. Softova.
\newblock {\em Elliptic and Parabolic Equations with Discontinuous
  Coefficients}, volume 109 of {\em Mathematical Research}.
\newblock Wiley-VCH, 2000.

\bibitem{MR1766938}
J.~M. Melenk.
\newblock On {$n$}-widths for elliptic problems.
\newblock {\em J. Math. Anal. Appl.}, 247(1):272--289, 2000.

\bibitem{MR506997}
F.~Murat.
\newblock Compacit\'e par compensation.
\newblock {\em Ann. Scuola Norm. Sup. Pisa Cl. Sci. (4)}, 5(3):489--507, 1978.

\bibitem{Mur78}
F.~Murat and L.~Tartar.
\newblock H-convergence.
\newblock {\em S\'{e}minaire d'Analyse Fonctionnelle et Num\'{e}rique de
  l'Universit\'{e} d'Alger}, 1978.

\bibitem{NetSaf05}
Y.~Netrusov and Y.~Safarov.
\newblock Weyl asymptotic formula for the laplacian on domains with rough
  boundaries.
\newblock {\em Communications in Mathematical Physics}, 253(2):481--509, 2005.

\bibitem{Ngu90}
G.~Nguetseng.
\newblock A general convergence result for a functional related to the theory
  of homogenization.
\newblock {\em SIAM J. Math. Anal.}, 20(3):608--623, 1989.

\bibitem{MR2399542}
J.~Nolen, G.~Papanicolaou, and O.~Pironneau.
\newblock A framework for adaptive multiscale methods for elliptic problems.
\newblock {\em Multiscale Model. Simul.}, 7(1):171--196, 2008.

\bibitem{OwhZha07errata}
H.~Owhadi and L.~Zhang.
\newblock Errata to metric-based upscaling.
\newblock {\em Available at http:www.acm/caltech.edu/~owhadi/}, 2007.

\bibitem{MR2292954}
H.~Owhadi and L.~Zhang.
\newblock Metric-based upscaling.
\newblock {\em Comm. Pure Appl. Math.}, 60(5):675--723, 2007.

\bibitem{MR2377253}
H.~Owhadi and L.~Zhang.
\newblock Homogenization of parabolic equations with a continuum of space and
  time scales.
\newblock {\em SIAM J. Numer. Anal.}, 46(1):1--36, 2007/08.

\bibitem{OwZh06c}
H.~Owhadi and L.~Zhang.
\newblock Homogenization of the acoustic wave equation with a continuum of
  scales.
\newblock {\em Computer Methods in Applied Mechanics and Engineering},
  198(2-4):97--406, 2008.
\newblock Arxiv math.NA/0604380.

\bibitem{PapVar82}
George~C. Papanicolaou and S.~R.~S. Varadhan.
\newblock Diffusions with random coefficients.
\newblock In {\em Statistics and probability: essays in honor of C. R. Rao},
  pages 547--552. North-Holland, Amsterdam, 1982.

\bibitem{Pinkus85}
A.~Pinkus.
\newblock {\em n-Width in Approximation Theory}.
\newblock Springer, New York, 1985.

\bibitem{MR0240443}
S.~Spagnolo.
\newblock Sulla convergenza di soluzioni di equazioni paraboliche ed
  ellittiche.
\newblock {\em Ann. Scuola Norm. Sup. Pisa (3) 22 (1968), 571-597; errata,
  ibid. (3)}, 22:673, 1968.

\bibitem{MR0477444}
S.~Spagnolo.
\newblock Convergence in energy for elliptic operators.
\newblock In {\em Numerical solution of partial differential equations, III
  (Proc. Third Sympos. (SYNSPADE), Univ. Maryland, College Park, Md., 1975)},
  pages 469--498. Academic Press, New York, 1976.

\bibitem{MR2283892}
T.~Strouboulis, L.~Zhang, and I.~Babu{\v{s}}ka.
\newblock Assessment of the cost and accuracy of the generalized {FEM}.
\newblock {\em Internat. J. Numer. Methods Engrg.}, 69(2):250--283, 2007.

\bibitem{MR2317004}
R.~A. Todor and C.~Schwab.
\newblock Convergence rates for sparse chaos approximations of elliptic
  problems with stochastic coefficients.
\newblock {\em IMA J. Numer. Anal.}, 27(2):232--261, 2007.

\bibitem{MR1511560}
H.~Weyl.
\newblock \"{U}ber gew\"ohnliche {D}ifferentialgleichungen mit
  {S}ingularit\"aten und die zugeh\"origen {E}ntwicklungen willk\"urlicher
  {F}unktionen.
\newblock {\em Math. Ann.}, 68(2):220--269, 1910.

\bibitem{WhHo87}
C.~D. White and R.~N. Horne.
\newblock Computing absolute transmissibility in the presence of finescale
  heterogeneity.
\newblock {\em SPE Symposium on Reservoir Simulation}, page 16011, 1987.

\bibitem{MR1898136}
X.~H. Wu, Y.~Efendiev, and T.~Y. Hou.
\newblock Analysis of upscaling absolute permeability.
\newblock {\em Discrete Contin. Dyn. Syst. Ser. B}, 2(2):185--204, 2002.

\bibitem{ZhBFOww09}
L.~Zhang, L.~Berlyand, M.~Federov, and H.~Owhadi.
\newblock Global energy matching method for atomistic to continuum modeling of
  self-assembling biopolymer aggregates.
\newblock {\em submitted}, 2009.

\end{thebibliography}
\end{document}